\renewcommand{\paragraph}[1]{\smallskip \noindent\textbf{#1}}
\newcommand{\MyReference}[1]{Ref.\,\refcite{#1}}
\newcommand{\colred}[1]{{\color{red}#1}}
\newcommand{\set}[1]{\mathbb{#1}}
\newcommand{\NN}{\set{N}}
\newcommand{\RR}{\set{R}}
\newcommand{\ee}{\text{e}}
\newcommand{\dd}{{\rm{d}}}
\newcommand{\abs}[1]{|#1|}
\newcommand{\absbig}[1]{\big|#1\big|}
\newcommand{\absBig}[1]{\Big|#1\Big|}
\newcommand{\scpbig}[3]{\big(#2\big\vert#3\big)_{#1}}
\newcommand{\scpBig}[3]{\Big(#2\Big\vert#3\Big)_{#1}}
\newcommand{\norm}[2]{\|#2\|_{#1}}
\newcommand{\normbig}[2]{\big\|#2\big\|_{#1}}
\newcommand{\normBig}[2]{\Big\|#2\Big\|_{#1}}
\newcommand{\nice}[1]{\mathcal{#1}}
\newcommand{\nM}{\nice{M}}
\newcommand{\nN}{\nice{N}}
\newcommand{\nO}{\nice{O}}
\newcommand{\nT}{\nice{T}}
\newcommand{\nLa}{\nice{L}^{(a)}}
\newcommand{\nLak}{\nice{L}^{(a_k)}}
\newcommand{\nLzero}{\nice{L}^{(0)}}
\newcommand{\Lp}{L_p}
\newcommand{\Lq}{L_q}
\newcommand{\Lone}{L_1}
\newcommand{\Ltwo}{L_2}
\newcommand{\Lfour}{L_4}
\newcommand{\Lsix}{L_6}
\newcommand{\Linfty}{L_{\infty}}
\newcommand{\Wpk}{W_p^{k}}
\newcommand{\Wtwok}{W_2^{k}}
\newcommand{\Wfourone}{W_4^{1}}
\newcommand{\Winftyone}{W_{\infty}^1}
\newcommand{\Winftytwo}{W_{\infty}^2}
\newcommand{\Winftythree}{W_{\infty}^3}
\newcommand{\Hminus}{H^{-1}}
\newcommand{\Hk}{H^k}
\newcommand{\Hone}{H^1}
\newcommand{\Honezero}{H^1_0}
\newcommand{\Htwo}{H^2}
\newcommand{\Hthree}{H^3}
\newcommand{\Hfour}{H^4}
\newcommand{\Htwodiamond}{\Htwo_{\diamond}}
\newcommand{\Hthreediamond}{\Hthree_{\diamond}}
\newcommand{\wX}{\widetilde{X}}
\newcommand{\vecx}{\vec{x}}
\newcommand{\vecv}{\vec{v}}
\newcommand{\vecA}{\vec{A}}
\renewcommand{\Pr}{\text{Pr}}
\newcommand{\bb}{\beta}
\newcommand{\bba}{\beta^{(a)}}
\newcommand{\bbak}{\beta^{(a_k)}}
\newcommand{\bbzero}{\beta^{(0)}}
\newcommand{\psia}{\psi^{(a)}}
\newcommand{\psiak}{\psi^{(a_k)}}
\newcommand{\psizero}{\psi^{(0)}}
\newcommand{\phik}{\phi^{(k)}}
\newcommand{\psik}{\psi^{(k)}}
\newcommand{\phistar}{\phi_{*}}
\newcommand{\psistar}{\psi_{*}}
\newcommand{\alphaphi}{\alpha^{(\phi)}}
\newcommand{\rphi}{r^{(\phi)}}
\newcommand{\wE}{\widetilde{E}}
\newcommand{\MywEphipsi}[2]{\widetilde{E}_{#1}\big(\phi(#2), \psi(#2)\big)}
\newcommand{\MywEphipsiAtzero}[1]{\widetilde{E}_{#1}(\psi_0, \psi_0)}
\newcommand{\MyEphipsi}[2]{E_{#1}\big(\phi(#2), \psi(#2)\big)}
\newcommand{\MyEphiphi}[2]{E_{#1}\big(\phi(#2), \phi(#2)\big)}
\newcommand{\MyEphipsiAtzero}[1]{E_{#1}(\psi_0, \psi_0)}
\newcommand{\BoundE}[1]{\overline{E}_{#1}}
\newcommand{\Bounde}[1]{\overline{e}_{#1}}
\newcommand{\EE}{\nice{E}}
\newcommand{\BEMERKUNGBARBARA}[1]{\colred{}}
\begin{document}
\markboth{Barbara Kaltenbacher, Mechthild Thalhammer}{Fundamental models in nonlinear acoustics: Part~I. Analytical comparison}
\title{FUNDAMENTAL MODELS IN NONLINEAR ACOUSTICS \\ PART~I. ANALYTICAL COMPARISON}
\author{BARBARA KALTENBACHER}
\address{Alpen-Adria-Universität Klagenfurt, Institut für Mathematik, \\ Universitätsstraße 65–67, 9020~Klagenfurt, Austria. \\
Barbara.Kaltenbacher@aau.at}
\author{MECHTHILD THALHAMMER}
\address{Leopold–Franzens-Universität Innsbruck, Institut für Mathematik, \\ Technikerstraße 13/VII, 6020~Innsbruck, Austria. \\
Mechthild.Thalhammer@uibk.ac.at}
\maketitle
\begin{abstract}
This work is concerned with the study of fundamental models from nonlinear acoustics. 
In Part~I, a hierarchy of nonlinear damped wave equations arising in the description of sound propagation in thermoviscous fluids is deduced. 
In particular, a rigorous justification of two classical models, the Kuznetsov and Westervelt equations, retained as limiting systems for consistent initial data, is given. 
Numerical comparisons that confirm and complement the theoretical results are provided in Part~II.
\end{abstract}
\keywords{Nonlinear acoustics; Kuznetsov equation; Westervelt equation; Limiting system; Energy estimates.}
\ccode{AMS Subject Classification: 
35L72, 
35L77} 
\section{Introduction}
\paragraph{Nonlinear damped wave equations.}
In the present work, we study a hierarchy of higher-order nonlinear damped wave equations that arise in the modelling of sound propagation in thermoviscous fluids, see Table~\ref{Table1}.
Employing a reformulation as abstract evolution equation for the acoustic velocity potential, our most fundamental model takes the compact form 
\begin{equation}
\label{eq:Equation1}
\begin{split}
&\partial_{ttt} \psi(t) 
- \bb_1 \, \Delta \partial_{tt} \psi(t) 
+ \bb_2 \, \Delta^2 \partial_{t} \psi(t) 
- \bb_3 \, \Delta \partial_{t} \psi(t) 
+ \bb_4 \, \Delta^2 \psi(t) \\
&\quad + \partial_{tt} \Big(\tfrac{1}{2} \, \bb_5 \, \big(\partial_{t} \psi(t)\big)^2 + \bb_6 \, \abs{\nabla \psi(t)}^2\Big) = 0\,;
\end{split}
\end{equation}
the positive coefficients $\bb_1, \dots, \bb_6 > 0$ are defined by decisive physical quantities such as the mean value of the mass density, the speed of sound, the viscosity, the thermal conductivity, and the parameter of nonlinearity, see also Table~\ref{Table2}.
As this equation only marginally extends a nonlinear damped wave equation deduced in \MyReference{Brunnhuber2015}~(Eq.\,(1.19)) and \MyReference{BrunnhuberJordan2016}~(Eq.\,(4)), we refer to it as \textsc{Blackstock--Crighton--Brunnhuber--Jordan--Kuznetsov equation} or briefly as \textsc{Brunnhuber--Jordan--Kuznetsov equation}. 
Various models known from the literature are embedded in our most fundamental model as reduced models; 
a central question of this work is to rigorously justify that two classical models, the Kuznetsov and Westervelt equations, are retained as limiting systems for vanishing thermal conductivity, provided that the initial data satisfy suitable consistency conditions.

\begin{table}[t!]
{\scriptsize
\begin{equation*}
\begin{CD}
\text{Brunnhuber--Jordan--Kuznetsov (BJK) } @>\sigma = 0>> \text{ Brunnhuber--Jordan--Westervelt (BJW)} \\ 
@VV \sigma_0 = 0 V @VV \sigma_0 = 0 V \\
\text{Blackstock--Crighton--Kuznetsov (BCK) } @>\sigma = 0>> \text{ Blackstock--Crighton--Westervelt (BCW)} \\ 
@VV a = 0 V @VV a = 0 V \\
\text{Kuznetsov (K) } @>\sigma = 0>> \text{ Westervelt (W)} \\ 
\end{CD}
\end{equation*}}
\caption{Hierachy of nonlinear damped wave equations. The Kuznetsov and Westervelt equations are retained as limiting systems for consistent initial data.}
\label{Table1}
\end{table}

\begin{table}[t!]
{\footnotesize 
Decisive physical quantities
\begin{quote}
\text{Mass density $\varrho = \varrho_0 + \varrho_{\sim}$}  \\
\text{Acoustic particle velocity $\vecv = \vecv_{\sim}$}  \\
\text{Associated acoustic velocity and vector potentials $\vecv_{\sim} = \nabla \psi + \nabla \times \vecA$}  \\
\text{Acoustic pressure $p = p_0 + p_{\sim}$}  \\
\text{Temperature $T = T_0 + T_{\sim}$}  \\
\text{Shear (or dynamic) viscosity~$\mu$}  \\
\text{Bulk viscosity~$\mu_B$}  \\
\text{Kinematic viscosity $\nu = \tfrac{\mu}{\varrho_0}$}  \\
\text{Prandtl number Pr} \\ 
\text{Thermal conductivity $a = \tfrac{\nu}{\Pr}$}  \\
\text{Specific heat at constant volume~$c_V$} \\
\text{Specific heat at constant pressure~$c_p$} \\
\text{Thermal expansion coefficient~$\alpha_V$} \\
\text{Speed of sound $c_0 = \sqrt{\tfrac{c_p p_0}{c_V \varrho_0}}$} \\
Parameter of nonlinearity~$\tfrac{B}{A}$ 
\end{quote}
Auxiliary abbreviations and relations 
\begin{quote}
$A = c_0^2 \, \varrho_0$ \\
$\tfrac{a}{c_V \varrho_0} = a \, (1 + \tfrac{B}{A})$ \\
$\Lambda = \tfrac{\mu_B}{\mu} + \tfrac{4}{3}$ \\
$\bba_1 = a \, \big(1 + \tfrac{B}{A}\big) + \nu \Lambda$ \\ 
$\bba_2(\sigma_0) = a \, \big(\nu \Lambda + a \, \tfrac{B}{A} + \sigma_0 \, \tfrac{B}{A} \, (\nu \Lambda - a)\big)$ with $\sigma_0 \in \{0,1\}$ \\
$\bb_3 = c_0^2$ \\
$\bba_4(\sigma_0) = a \, \big(1 + \sigma_0 \, \tfrac{B}{A}\big) \, c_0^2$ with $\sigma_0 \in \{0,1\}$ \\
$\bb_5(\sigma) = \tfrac{1}{c_0^2} \, \big(2 \, (1 - \sigma) + \tfrac{B}{A}\big)$ with $\sigma \in \{0,1\}$ \\
$\bb_6(\sigma) = \sigma$ with $\sigma \in \{0,1\}$ \\
$\bba_0(\sigma_0) = \frac{\bba_2(\sigma_0)}{\bba_4(\sigma_0)} = \tfrac{1}{c_0^2} \, \big(\nu \Lambda + (1 - \sigma_0) \, a \, \tfrac{B}{A}\big)$ with $\sigma_0 \in \{0,1\}$ \\
$\alpha = 1 + \bb_5(\sigma) \, \partial_{t} \psi$ with $\sigma \in \{0,1\}$ \\ 
$r = \bb_5(\sigma) \, \big(\partial_{tt} \psi\big)^2 + \bb_6(\sigma) \, \partial_{tt} \abs{\nabla \psi}^2$ with $\sigma \in \{0,1\}$ 
\end{quote}}
\caption{Decisive physical quantities and auxiliary abbreviations.}
\label{Table2}
\end{table}

\paragraph{Outline.}
Our work has the following structure. 
Basic notation and assumptions are introduced in Section~\ref{SectionNotationAssumptions}.
In Section~\ref{SectionModels}, we state the considered hierarchy of models; 
further details on the derivation of the most general model are found in~\ref{SectionAppendix}. 
Our main result on limiting systems is deduced in Section~\ref{SectionLimit};
auxiliary reformulations and a priori energy estimates are provided in Section~\ref{SectionAuxiliaries}.
\subsection{Basic notation and assumptions}	
\label{SectionNotationAssumptions}
In the following, we recall standard abbreviations and basic assumptions that are used throughout without further mention. 

\paragraph{Space and time domain.}
We assume that the considered spatial domain $\Omega \subset \RR^d$ is bounded and that its boundary~$\partial \Omega$ is sufficiently regular. 
In Sections~\ref{SectionModels} to~\ref{SectionLimit}, we are primarily interested in the most relevant three-dimensional case; 
however, with regard to numerical illustrations, we admit $d \in \{1,2,3\}$.
As time domain, we consider a bounded interval $[0, T]$; 
under certain regularity, compatibility, and smallness requirements on the prescribed initial data, existence and uniqueness of the solution to~\eqref{eq:Equation1} subject to homogeneous Dirichlet boundary conditions is ensured, see Proposition~\ref{thm:Proposition} and Remark~\ref{Remark1}.

\paragraph{Euclidian norm.}
Let $v = (v_1, \dots, v_d)^T \in \RR^d$ and $w = (w_1, \dots, w_d)^T \in \RR^d$.
As usual, the Euclidian inner product and the associated norm are denoted by 
\begin{equation*}
v \cdot w = \sum_{j=1}^{d} v_j \, w_j\,, \quad \abs{v} = \sqrt{v \cdot v}\,.
\end{equation*}

\paragraph{Space derivatives.}
For scalar-valued and vector-valued functions 
\begin{equation*}
\begin{gathered}
f: \Omega \longrightarrow \RR: x = (x_1, \dots, x_d)^T \longmapsto f(x)\,, \\
F: \Omega \longrightarrow \RR^d: x = (x_1, \dots, x_d)^T \longmapsto F(x) = \big(F_1(x), \dots, F_d(x)\big)^T\,,
\end{gathered}
\end{equation*} 
we denote by $(\partial_{x_j} f)_{j = 1}^{d}$ and $(\partial_{x_j} F_k)_{j,k = 1}^{d}$ their spatial derivatives. 
Gradient, Laplacian, and divergence are defined by 
\begin{equation*} 
\nabla f = \big(\partial_{x_1} f, \dots, \partial_{x_d} f\big)^T\,, \quad \Delta f = \sum_{j=1}^{d} \partial_{x_j}^2 f\,, \quad 
\nabla \cdot F = \sum_{j=1}^{d} \partial_{x_j} F_j\,. 
\end{equation*}

\paragraph{Lebesgue and Sobolev spaces.}
For exponents $p \in [1, \infty]$ and $k \in \NN_{\geq 1}$, we denote by $\Lp(\Omega, \RR)$ and $\Wpk(\Omega, \RR)$ the standard Lebesgue and Sobolev spaces;
as common, we set $\Hk(\Omega, \RR) = \Wtwok(\Omega, \RR)$.
In particular, the Hilbert space~$\Ltwo(\Omega, \RR)$ is endowed with inner product and associated norm given by 
\begin{equation*}
\scpbig{\Ltwo}{f}{g} = \int_{\Omega} f(x) \, g(x) \; \dd x\,, \quad 
\normbig{\Ltwo}{f} = \sqrt{\int_{\Omega} \big(f(x)\big)^2 \; \dd x}\,, \quad f,g \in \Ltwo(\Omega, \RR)\,;
\end{equation*}
accordingly, for vector-valued functions that arise in connection with the gradient, we set 
\begin{equation*}
\scpbig{\Ltwo}{F}{G} = \int_{\Omega} F(x) \cdot G(x) \; \dd x\,, \quad 
\normbig{\Ltwo}{F} = \sqrt{\int_{\Omega} \absbig{F(x)}^2 \; \dd x}\,, \quad F, G \in \Ltwo\big(\Omega, \RR^d\big)\,.
\end{equation*}

\paragraph{Lebesgue--Bocher spaces.}
In Section~\ref{SectionAuxiliaries}, we employ reformulations of the considered nonlinear damped wave equations as abstract evolution equations on Banach spaces and 
deduce a priori estimates with respect to the norms of different Bochner--Lebesgue spaces such as 
\begin{equation*}
\normbig{\Lp([0, T], \Lq(\Omega))}{\varphi} = \bigg(\int_{0}^{T} \normbig{\Lq}{\varphi(t)}^p \; \dd t\bigg)^{\frac{1}{p}}\,,  \quad 
\varphi \in \Lp\big([0, T], \Lq(\Omega)\big)\,.
\end{equation*}
\section{Fundamental models}	
\label{SectionModels}
In this section, we introduce fundamental models arising in nonlinear acoustics, the Blackstock--Crighton--Brunnhuber--Jordan--Kuznetsov or briefly Brunnhuber--Jordan--Kuznetsov (BJK) equation, the Blackstock--Crighton--Kuznetsov (BCK) equation, the Kuznetsov (K) equation, the Blackstock--Crighton--Brunnhuber--Jordan--Westervelt of briefly Brunnhuber--Jordan--Westervelt (BJW) equation, the Blackstock--Crighton--Westervelt (BCW) equation, and the Westervelt (W) equation;
these nonlinear damped wave equations form a hierarchy in the sense that some of them can be viewed as special cases of others, see Table~\ref{Table1}.
In Section~\ref{SectionModels1}, we specify the physical and mathematical principles employed in the derivation of the Brunnhuber--Jordan--Kuznetsov equation, which is the most general model studied in this work and provides the basis for reduced models such as the Kuznetsov and Westervelt equations. 
In Section~\ref{SectionModels2}, we review the considered nonlinear damped wave equations and put them into relation.
Our collection of models is by no means complete, and we refer to \MyReference{Kaltenbacher2015} for recent references from the active field of modelling in nonlinear acoustics as well as to the classical works \MyReference{Crighton1979}, \refcite{EnfloHedberg2006}, \refcite{HamiltonBlackstock1998}, \refcite{Kuznetsov1971}, \refcite{MakarovOchmann1996}, \refcite{MakarovOchmann1997a}, \refcite{MakarovOchmann1997b}, \refcite{Pierce1989}.
\subsection{Derivation of Brunnhuber--Jordan--Kuznetsov equation}	
\label{SectionModels1}
\paragraph{Notation.}
We meanwhile employ the notation~$\vecx$, $\vecv$, and~$\vecA$ so that the distinction between scalar-valued and vector-valued quantities becomes evident. 

\paragraph{Physical quantities.}
The main physical quantities for the description of sound propagation in thermoviscous fluids are the mass density~$\varrho$, the acoustic particle velocity~$\vecv$, the acoustic pressure~$p$, and the temperature~$T$. 
These space-time-dependent quantities are decomposed into their mean values and fluctuations
\begin{equation*}
\begin{gathered}
\varrho(\vecx,t) = \varrho_0 + \varrho_{\sim}(\vecx,t)\,, \quad \vecv(\vecx,t) = \vecv_0 + \vecv_{\sim}(\vecx,t) = \vecv_{\sim}(\vecx,t)\,, \\
p(\vecx,t) = p_0 + p_{\sim}(\vecx,t)\,, \quad T(\vecx,t) = T_0 + T_{\sim}(\vecx,t)\,;
\end{gathered}
\end{equation*}
in the situation relevant here, the mean value of the acoustic particle velocity may be assumed to vanish. 

\paragraph{Physical principles.}
A system of time-dependent nonlinear partial differential equations governing the interplay of these quantities results from the conservation laws for mass, momentum, and energy, supplemented with an equation of state.  
The conservation of mass is reflected by the continuity equation 
\begin{subequations}
\label{eq:ConservationLaws}
\begin{equation}
\label{eq:ConservationMass}
\partial_{t} \varrho + \nabla \cdot (\varrho \, \vecv) = 0\,. 
\end{equation}
The conservation of momentum corresponds to the relation 
\begin{equation}
\label{eq:ConservationMomentum}
\partial_{t} (\varrho \, \vecv) + \vecv \, \nabla \cdot (\varrho \, \vecv) + \varrho \, (\vecv \cdot \nabla) \, \vecv + \nabla p
= \mu \, \Delta \vecv + \big(\mu_B + \tfrac{1}{3} \, \mu\big) \, \nabla (\nabla \cdot \vecv)\,, 
\end{equation}
where~$\mu$ and~$\mu_B$ denote the shear and bulk viscosity, respectively.
The relation describing the conservation of energy reads 
\begin{equation*}
\varrho \, (\partial_{t} E + \vecv \cdot \nabla E) + p \, \nabla \cdot \vecv 
= a \, \Delta T + \big(\mu_B - \tfrac{2}{3} \, \mu\big) \, (\nabla \cdot \vecv)^2 + \tfrac{1}{2} \, \mu \, \normbig{F}{\nabla \vecv + (\nabla\vecv)^T}^2\,, 
\end{equation*}
see Eq.\,(3c) in \MyReference{Blackstock1963}.
Here, $E$ denotes the internal energy per unit mass and $a = \frac{\nu}{\mathrm{Pr}}$ the thermal conductivity, defined by the kinematic viscosity $\nu = \tfrac{\mu}{\varrho_0}$ and the Prandtl number~$\mathrm{Pr}$;
the subscript~$F$ indicates that the Frobenius norm is used. 
Rewriting the left hand side of this equation by means of the specific heat at constant volume and pressure, $c_V$ and~$c_p$, respectively, as well as the thermal expansion coefficient~$\alpha_V$, the conservation of energy is given by 
\begin{equation}
\label{eq:ConservationEnergy}
\begin{split}
&\varrho \, (c_V \, \partial_{t} T + c_V \, \vecv \cdot \nabla T + \tfrac{c_p-c_V}{\alpha_V} \, \nabla \cdot \vecv) \\
&\quad = a \, \Delta T + \big(\mu_B - \tfrac{2}{3} \, \mu\big) \, (\nabla \cdot \vecv)^2 
+ \tfrac{1}{2} \, \mu \, \normbig{F}{\nabla \vecv + (\nabla\vecv)^T}^2\,, 
\end{split}
\end{equation}
\end{subequations}
see Eq.\,(3c') in \MyReference{Blackstock1963}.
The heuristic equation of state for the acoustic pressure in dependence of mass density and temperature is approximated by the first terms of a Taylor-like expansion 
\begin{equation}
\label{eq:EquationState}
p_{\sim} \approx  A \, \tfrac{\varrho_{\sim}}{\varrho_0} + \tfrac{B}{2} \, \big(\tfrac{\varrho_{\sim}}{\varrho_0}\big)^2 + \hat{A} \, \tfrac{T_{\sim}}{T_0} 
\end{equation}
involving certain positive coefficients $A, B, \hat{A} > 0$, see 
Eq.\,(5d) in \MyReference{Blackstock1963} 
and also Table~\ref{Table2}.

\paragraph{Helmholtz decomposition.}
A Helmholtz decomposition of the acoustic particle velocity into an irrotational and a solenoidal part
\begin{equation}
\label{eq:Helmholtz}
\vecv_{\sim} = \nabla \psi + \nabla \times \vecA
\end{equation}
leads to a reformulation of the conservation laws~\eqref{eq:ConservationLaws} in terms of the acoustic velocity potential~$\psi$ and the vector potential~$\vecA$. 
We note that some authors use instead the relation $\vecv_{\sim} = - \nabla \psi + \nabla \times \vecA$ which explains a differing sign in the resulting nonlinear damped wave equations.

\paragraph{Derivation of reduced models.}
In order to derive reduced models from~\eqref{eq:ConservationLaws}-\eqref{eq:EquationState}, three categories of contributions are distinguished. 
First, terms that are linear with respect to the fluctuating quantities and not related to dissipative effects are taken into account (first-order contributions). 
Second, quadratic terms with respect to fluctuations and dissipative linear terms are included (second-order contributions). 
All remaining terms are considered to be higher-order contributions. 
Due to the fact that the conservation laws contain at least first-order space or time derivatives, zero-order terms with respect to the fluctuating quantities do not play a role further on. 
This classification and the so-called \emph{substitution corollary}, which allows to replace any quantity in a second-order or higher-order term by its first-order approximation, was introduced by~\textsc{Lighthill} in \MyReference{Lighthill1956} and described by~\textsc{Blackstock} in \MyReference{Blackstock1963}.

\paragraph{Linear wave equation.}
A natural approach for the derivation of a single higher-order partial differential equation is to combine the equations for conservation of mass and momentum. 
Subtracting the time derivative of~\eqref{eq:ConservationMass} from the divergence of~\eqref{eq:ConservationMomentum} and assuming interchangeability of space and time differentiation, the term $\partial_{t} \nabla \cdot (\varrho \, \vecv) = \nabla \cdot \partial_{t} (\varrho \, \vecv)$ cancels
\begin{equation*}
\nabla \cdot \Big(\vecv \, \nabla \cdot (\varrho \, \vecv) + \varrho \, (\vecv \cdot \nabla) \, \vecv\Big) + \Delta p - \partial_{tt} \varrho 
= \mu \, \Lambda \, \Delta (\nabla \cdot \vecv)\,;
\end{equation*}
here, we set $\Lambda = \tfrac{\mu_B}{\mu} + \tfrac{4}{3}$. 
Retaining only the first-order contribution $\Delta p_{\sim} - \partial_{tt} \varrho_{\sim}$ and replacing~\eqref{eq:EquationState} by the first-order approximation $\varrho_{\sim} \approx \frac{\varrho_0}{A} \, p_{\sim}$, where $A = c_0^2 \, \varrho_0$ and $c_0 = \sqrt{\frac{c_p \, p_0}{c_V \, \varrho_0}}$ denotes the speed of sound, yields a linear wave equation for the acoustic pressure 
\begin{equation*}
\partial_{tt} p_{\sim} - c_0^2 \, \Delta p_{\sim} = 0\,.
\end{equation*}

\paragraph{Nonlinear damped wave equation (Brunnhuber--Jordan--Kuznetsov equation).}
If additionally all second-order contributions are taken into account in~\eqref{eq:ConservationLaws} and~\eqref{eq:EquationState}, a more involved procedure for eliminating~$\varrho_{\sim}$, $p_{\sim}$, and~$T_{\sim}$ leads to a nonlinear damped wave equation for the acoustic velocity potential
\begin{subequations}
\label{eq:BrunnhuberJordanKuznetsov}
\begin{equation}
\begin{split}
&\partial_{ttt} \psi 
- \Big(a \, \big(1 + \tfrac{B}{A}\big) + \nu \Lambda\Big) \, \Delta \partial_{tt} \psi 
+ a \, \big(1 + \tfrac{B}{A}\big) \, \nu \Lambda \, \Delta^2 \partial_{t} \psi 
- c_0^2 \, \Delta \partial_{t} \psi \\
&\quad + a \, \big(1 + \tfrac{B}{A}\big) \, c_0^2 \, \Delta^2 \psi
+ \partial_{tt} \Big(\tfrac{1}{2 c_0^2} \, \tfrac{B}{A} \, \big(\partial_{t} \psi\big)^2 + \abs{\nabla \psi}^2\Big) = 0\,;
\end{split}
\end{equation}
details of the derivation are included in~\ref{SectionAppendix}.
As this equation coincides with Eq.\,(1.19) in \MyReference{Brunnhuber2015} and Eq.\,(4) in \MyReference{BrunnhuberJordan2016}, aside from the extension of the term~$a \, c_0^2 \, \Delta^2 \psi$ to $a \, (1 + \tfrac{B}{A}) \, c_0^2 \, \Delta^2 \psi$, we refer to it as Brunnhuber--Jordan--Kuznetsov equation.
We point out that the differential operator defining the linear contributions is given by the composition of a heat operator and a wave operator  
\begin{equation}
\begin{split}
&\Big(\partial_{t} - a \, \big(1 + \tfrac{B}{A}\big) \, \Delta\Big) \, \big(\partial_{tt} \psi - \nu \Lambda \, \Delta \partial_{t} \psi - c_0^2 \, \Delta \psi\big) \\
&\quad + \partial_{tt} \Big(\tfrac{1}{2 c_0^2} \, \tfrac{B}{A} \, \big(\partial_{t} \psi\big)^2 + \abs{\nabla \psi}^2\Big) = 0\,.
\end{split}
\end{equation}
\end{subequations}
see also Eq.\,(1) in \MyReference{Brunnhuber2015} and Eq.\,(1) in \MyReference{BrunnhuberJordan2016};
due to the fact that relation~\eqref{eq:ConservationEnergy} reflecting energy conservation involves the heat operator $\partial_{t} - a \, \Delta$, its appearance is quite intuitive. 
Our analysis, however, does not exploit the fact that the general model is factorisable and thus also applies to 
Eq.\,(1.19) in \MyReference{Brunnhuber2015} and Eq.\,(4) in \MyReference{BrunnhuberJordan2016}.
A significant discrepancy of~\eqref{eq:BrunnhuberJordanKuznetsov} compared to the model obtained by \textsc{Blackstock}, see Eq.\,(7) in \MyReference{Blackstock1963}, is the presence of the term comprising $\Delta^2\partial_{t} \psi$, which is essential for proving well-posedness, see \MyReference{Kaltenbacher2017}.

\paragraph{Limiting model (Kuznetsov equation).}
In situations where temperature constraints are insignificant, the Kuznetsov (K) equation 
\begin{equation}
\label{eq:Kuznetsov}
\partial_{tt} \psi - \nu \Lambda \, \Delta \partial_{t} \psi - c_0^2 \, \Delta \psi 
+ \partial_{t} \Big(\tfrac{1}{2 c_0^2} \, \tfrac{B}{A} \, \big(\partial_{t}\psi\big)^2 + \abs{\nabla \psi}^2\Big) = 0\,,
\end{equation}
see \MyReference{Kuznetsov1971}, results from~\eqref{eq:BrunnhuberJordanKuznetsov} by considering the formal limit $a = \tfrac{\nu}{\mathrm{Pr}} \to 0_{+}$ (but not necessarily $\nu \to 0_{+}$).  
More precisely, setting 
\begin{equation*}
F(\psi) = \partial_{tt} \psi - \nu \Lambda \, \Delta \partial_{t} \psi - c_0^2 \, \Delta \psi 
+ \partial_{t} \Big(\tfrac{1}{2 c_0^2} \, \tfrac{B}{A} \, \big(\partial_{t} \psi\big)^2 + \abs{\nabla \psi}^2\Big)\,,
\end{equation*}
it is evident that any solution to~\eqref{eq:Kuznetsov} satisfies $F(\psi) = 0$ and in particular fulfills $\partial_{t} F(\psi) = 0$, which corresponds to~\eqref{eq:BrunnhuberJordanKuznetsov} with $a = 0$;
on the other hand, integration of the condition $\partial_{t} F(\psi) = 0$ with respect to time implies that any solution to~\eqref{eq:BrunnhuberJordanKuznetsov} with $a = 0$ solves~\eqref{eq:Kuznetsov}, provided that the prescribed initial data satisfy a consistency condition such that $F(\psi(\cdot,0)) = 0$.
A rigorous justification of this limiting process is given in Section~\ref{SectionLimit}.
\subsection{Hierarchy of nonlinear damped wave equations}
\label{SectionModels2}
We next introduce the considered hierarchy of nonlinear damped wave equations, see also Table~\ref{Table1}; 
we distinguish equations of Kuznetsov and Westervelt type, respectively. 

\subsubsection*{Equations of Kuznetsov type.}
\begin{enumerate}[(1)]
\item 
For convenience, we restate the Brunnhuber--Jordan--Kuznetsov equation~\eqref{eq:BrunnhuberJordanKuznetsov} in elaborate and factorised form 
\begin{equation}
\tag{BJK}
\begin{split}
&\partial_{ttt} \psi 
- \Big(a \, \big(1 + \tfrac{B}{A}\big) + \nu \Lambda\Big) \, \Delta \partial_{tt} \psi 
+ a \, \big(1 + \tfrac{B}{A}\big) \, \nu \Lambda \, \Delta^2 \partial_{t} \psi 
- c_0^2 \, \Delta \partial_{t} \psi \\
&\quad + a \, \big(1 + \tfrac{B}{A}\big) \, c_0^2 \, \Delta^2 \psi
+ \partial_{tt} \Big(\tfrac{1}{2 c_0^2} \, \tfrac{B}{A} \, \big(\partial_{t} \psi\big)^2 + \abs{\nabla \psi}^2\Big) = 0\,, \\
&\Big(\partial_{t} - a \, \big(1 + \tfrac{B}{A}\big) \, \Delta\Big) \, \big(\partial_{tt} \psi - \nu \Lambda \, \Delta \partial_{t} \psi - c_0^2 \, \Delta \psi\big) \\
&\quad + \partial_{tt} \Big(\tfrac{1}{2 c_0^2} \, \tfrac{B}{A} \, \big(\partial_{t} \psi\big)^2 + \abs{\nabla \psi}^2\Big) = 0\,,
\end{split}
\end{equation}
see also Eq.\,(1.19) in \MyReference{Brunnhuber2015} and Eq.\,(4) in \MyReference{BrunnhuberJordan2016}.
\item 
In the special case of a monatomic gas, where the identity $\Lambda \, \Pr = 1$ holds, or, more generally, when $a \, (\Lambda \, \Pr - 1) \, \tfrac{B}{A} 
= (\nu \Lambda - a) \, \tfrac{B}{A}$ is negligible, i.e.~$\nu \Lambda \tfrac{B}{A} \approx a \, \tfrac{B}{A}$, the contribution involving~$\Delta^2 \partial_{t} \psi$ formally reduces to 
\begin{equation*}
a \, \big(1 + \tfrac{B}{A}\big) \, \nu \Lambda \, \Delta^2 \partial_{t} \psi 
\approx a \, \big(\nu \Lambda + a \, \tfrac{B}{A}\big) \, \Delta^2 \partial_{t} \psi\,;
\end{equation*}
if we replace in addition the term $a \, (1 + \tfrac{B}{A}) \, c_0^2 \, \Delta^2 \psi$ by $a \, c_0^2 \, \Delta^2 \psi$, we retain the factorisable reduced model 
\begin{equation}
\tag{BCK}
\begin{split}
&\partial_{ttt} \psi
- \Big(a \, \big(1 + \tfrac{B}{A}\big) + \nu \Lambda\Big) \, \Delta \partial_{tt} \psi 
+ a \, \big(\nu \Lambda + a \, \tfrac{B}{A}\big) \, \Delta^2 \partial_{t} \psi
- c_0^2 \, \Delta \partial_{t} \psi \\
&\quad + a \, c_0^2 \, \Delta^2 \psi
+ \partial_{tt} \Big(\tfrac{1}{2 c_0^2} \, \tfrac{B}{A} \, \big(\partial_{t} \psi\big)^2 + \abs{\nabla \psi}^2\Big) = 0\,, \\
&\big(\partial_{t} - a \, \Delta\big) \, \Big(\partial_{tt} \psi - \big(\nu \Lambda + a \, \tfrac{B}{A}\big) \, \Delta \partial_{t} \psi - c_0^2 \, \Delta \psi\Big) \\
&\quad + \partial_{tt} \Big(\tfrac{1}{2 c_0^2} \, \tfrac{B}{A} \, \big(\partial_{t} \psi\big)^2 + \abs{\nabla \psi}^2\Big) = 0\,, 
\end{split}
\end{equation}
which we refer to as Blackstock--Crighton--Kuznetsov equation, see also Eq.\,(1) in \MyReference{Brunnhuber2015} and Eq.\,(1) in  \MyReference{BrunnhuberJordan2016}.
\item 
As shown in Section~\ref{SectionLimit}, the Kuznetsov equation
\begin{equation}
\tag{K}
\partial_{tt} \psi - \nu \Lambda \, \Delta \partial_{t} \psi - c_0^2 \, \Delta \psi 
+ \partial_{t} \Big(\tfrac{1}{2 c_0^2} \, \tfrac{B}{A} \, \big(\partial_{t} \psi\big)^2 + \abs{\nabla \psi}^2\Big) = 0\,, 
\end{equation}
see also Eq.\,(3) in \MyReference{Brunnhuber2015} and \MyReference{Kuznetsov1971}, is obtained from (BJK) and (BCK) in the limit $a \to 0_{+}$; 
for this reduced model, the orders of the arising space and time derivatives are significantly lowered. 
\end{enumerate}

\subsubsection*{Equations of Westervelt type.}
\begin{enumerate}[(1)]
\item 
In certain situations, local nonlinear effects reflected by $\abs{\nabla \psi}^2 - \tfrac{1}{c_0^2} \, (\partial_{t} \psi)^2$ are negligible and thus the nonlinearity can be replaced by 
\begin{equation*}
\tfrac{1}{2 c_0^2} \, \tfrac{B}{A} \, \big(\partial_{t} \psi\big)^2 + \abs{\nabla \psi}^2 \approx \tfrac{1}{2 c_0^2} \, (2 + \tfrac{B}{A})\, \big(\partial_{t} \psi\big)^2\,;
\end{equation*}
in accordance with our derivation of the Brunnhuber--Jordan--Kuznetsov equation, we keep the term $a \, (1 + \tfrac{B}{A}) \, c_0^2 \, \Delta^2 \psi$. 
Altogether, this yields the nonlinear damped wave equation 
\begin{equation}
\tag{BJW}
\begin{split}
&\partial_{ttt} \psi 
- \Big(a \, \big(1 + \tfrac{B}{A}\big) + \nu \Lambda\Big) \, \Delta \partial_{tt} \psi 
+ a \, \big(1 + \tfrac{B}{A}\big) \, \nu \Lambda \, \Delta^2 \partial_{t} \psi 
- c_0^2 \, \Delta \partial_{t} \psi \\
&\quad + a \, \big(1 + \tfrac{B}{A}\big) \, c_0^2 \, \Delta^2 \psi
+ \tfrac{1}{2 c_0^2} \, \big(2 + \tfrac{B}{A}\big) \, \partial_{tt} \big(\partial_{t} \psi\big)^2 = 0\,,
\end{split}
\end{equation}
which we refer to as Brunnhuber--Jordan--Westervelt equation;
as in (BJK), the linear contributions are given by the composition of a wave and a heat operator. 
\item 
In analogy to (BCK), the Blackstock--Crighton--Westervelt equation 
\begin{equation}
\tag{BCW}
\begin{split}
&\partial_{ttt} \psi
- \Big(a \, \big(1 + \tfrac{B}{A}\big) + \nu \Lambda\Big) \, \Delta \partial_{tt} \psi 
+ a \, \big(\nu \Lambda + a \, \tfrac{B}{A}\big) \, \Delta^2 \partial_{t} \psi
- c_0^2 \, \Delta \partial_{t} \psi \\
&\quad + a \, c_0^2 \, \Delta^2 \psi
+ \tfrac{1}{2 c_0^2} \, \big(2 + \tfrac{B}{A}\big) \, \partial_{tt} \big(\partial_{t} \psi\big)^2 = 0
\end{split}
\end{equation}
is retained as a reduced model from (BJW), see also Eq.\,(2) in \MyReference{Brunnhuber2015}.
\item
The Westervelt equation is given by 
\begin{equation}
\tag{W}
\partial_{tt} \psi - \nu \Lambda \,  \Delta \partial_{t} \psi - c_0^2 \, \Delta \psi
+ \tfrac{1}{2 c_0^2} \, \big(2 + \tfrac{B}{A}\big) \, \partial_{t} \big(\partial_{t} \psi\big)^2 = 0\,,
\end{equation}
see also Eq.\,(4) in \MyReference{Brunnhuber2015} and \MyReference{Westervelt1963}; 
as justified in Section~\ref{SectionLimit}, it results as limiting model from (BJK) for vanishing thermal conductivity and negligible local nonlinear effects.
\end{enumerate}
\section{Auxiliary results}
\label{SectionAuxiliaries}
In this section, we state unifying representations of the nonlinear damped wave equations studied in this work. 
Furthermore, we deduce reformulations of the Brunnhuber--Jordan--Kuznetsov equation and a priori energy estimates that are needed in Section~\ref{SectionLimit}.
\subsection{Unifying representations}
\paragraph{Abbreviations.}
In view of a unifying representation, it is convenient to introduce switching variables $\sigma_0, \sigma \in \{0,1\}$ and abbreviations for the arising non-negative coefficients 
\begin{subequations}
\label{eq:GeneralModel}
\begin{equation}
\label{eq:Coefficients}
\begin{gathered}
\bba_1 = a \, \big(1 + \tfrac{B}{A}\big) + \nu \Lambda > 0\,, \\
\bba_2(\sigma_0) = a \, \big(\nu \Lambda + a \, \tfrac{B}{A} + \sigma_0 \, \tfrac{B}{A} \, (\nu \Lambda - a)\big) > 0\,, \\
\bb_3 = c_0^2 > 0\,, \quad 
\bba_4(\sigma_0) = a \, \big(1 + \sigma_0 \, \tfrac{B}{A}\big) \, c_0^2 > 0\,, \\
\bb_5(\sigma) = \tfrac{1}{c_0^2} \, \big(2 \, (1 - \sigma) + \tfrac{B}{A}\big) > 0\,, \quad 
\bb_6(\sigma) = \sigma \geq 0\,;
\end{gathered}
\end{equation}
recall that the quantities $a, \frac{B}{A}, \nu \Lambda, c_0^2 > 0$ are strictly positive. 
In addition, we set 
\begin{equation}
\bba_0(\sigma_0) = \frac{\bba_2(\sigma_0)}{\bba_4(\sigma_0)} = \tfrac{1}{c_0^2} \, \big(\nu \Lambda + (1 - \sigma_0) \, a \, \tfrac{B}{A}\big) > 0\,.
\end{equation}
Evidently, these definitions imply the relations 
\begin{equation}
\begin{gathered}
\bba_0(1) = \tfrac{1}{c_0^2} \, \nu \Lambda\,, \quad \bba_0(0) = \tfrac{1}{c_0^2} \, \big(\nu \Lambda + a \, \tfrac{B}{A}\big)\,, \\
\bba_2(1) = a \, \big(1 + \tfrac{B}{A}\big) \, \nu \Lambda\,, \quad 
\bba_2(0) = a \, \big(\nu \Lambda + a \, \tfrac{B}{A}\big)\,, \\
\bba_4(1) = a \, \big(1 + \tfrac{B}{A}\big) \, c_0^2\,, \quad \bba_4(0) = a \, c_0^2\,, \\
\bb_5(1) = \tfrac{1}{c_0^2} \, \tfrac{B}{A}\,, \quad 
\bb_5(0) = \tfrac{1}{c_0^2} \, \big(2 + \tfrac{B}{A}\big)\,, \\
\bb_6(1) = 1\,, \quad \bb_6(0) = 0\,;
\end{gathered}
\end{equation}
in the limit $a \to 0_{+}$, the following values are obtained 
\begin{equation}
\bbzero_0(\sigma_0) = \tfrac{1}{c_0^2} \, \nu \Lambda\,, \quad 
\bbzero_1 = \nu \Lambda\,, \quad 
\bbzero_2(\sigma_0) = 0\,, \quad 
\bbzero_4(\sigma_0) = 0\,.
\end{equation}

\paragraph{Unifying representations.}
Employing a compact formulation as abstract evolution equation, the Brunnhuber--Jordan--Kuznetsov equation takes the following form with $\sigma_0 = \sigma = 1$ 
\begin{equation}
\begin{split}
&\partial_{ttt} \psi(t) 
- \bba_1 \, \Delta \partial_{tt} \psi(t) 
+ \bba_2(\sigma_0) \, \Delta^2 \partial_{t} \psi(t) 
- \bb_3 \, \Delta \partial_{t} \psi(t) \\
&\quad + \bba_4(\sigma_0) \, \Delta^2 \psi(t) 
+ \partial_{tt} \Big(\tfrac{1}{2} \, \bb_5(\sigma) \, \big(\partial_{t} \psi(t)\big)^2 + \bb_6(\sigma) \, \abs{\nabla \psi(t)}^2\Big) = 0\,, 
\end{split}
\end{equation}
\end{subequations}
see~(BJK); 
the equations~(BCK), (BJW), and (BCW) are included as special cases, see Table~\ref{Table1}. 
Moreover, the Kuznetsov and Westervelt equations rewrite as 
\begin{equation}
\label{eq:GeneralModelLimit}
\begin{split}
&\partial_{tt} \psi(t) - \bbzero_1 \, \Delta \partial_{t} \psi(t) - \bb_3 \, \Delta \psi(t) \\
&\quad + \partial_{t} \Big(\tfrac{1}{2} \, \bb_5(\sigma) \, \big(\partial_{t} \psi(t)\big)^2 + \bb_6(\sigma) \, \abs{\nabla \psi(t)}^2\Big) = 0\,,
\end{split}
\end{equation}
when setting $\sigma = 1$ or $\sigma = 0$, respectively. 
\subsection{Reformulations}
\paragraph{Initial and boundary conditions.}
We henceforth consider~\eqref{eq:GeneralModel} on~$[0, T]$, imposing homogeneous Dirichlet conditions on certain space and time derivatives of the solution
\begin{subequations}
\label{eq:AssumptionDirichlet}
\begin{gather}
\label{eq:AssumptionDirichletLower}
\partial_{tt} \psi(t)\big\vert_{\partial \Omega} = 0\,, \quad \Delta \partial_{t} \psi(t)\big\vert_{\partial \Omega} =0\,, \quad 
\Delta \psi(t)\big\vert_{\partial \Omega} =0\,, \\
\label{eq:AssumptionDirichletHigher}
\partial_{ttt} \psi(t)\big\vert_{\partial \Omega} = 0\,, \quad \Delta \partial_{tt} \psi(t)\big\vert_{\partial \Omega} =0\,.
\end{gather}
\end{subequations}
Moreover, we suppose that the initial conditions 
\begin{equation}
\label{eq:InitialCondition}
\psi(0) = \psi_0\,, \quad \partial_{t} \psi(0) = \psi_1\,, \quad \partial_{tt} \psi(0) = \psi_2\,, 
\end{equation}
are fulfilled; 
the needed regularity, compatibility, and smallness requirements on $\psi_0$, $\psi_1$, and $\psi_2$ are specified in Proposition~\ref{thm:Proposition}. 

\paragraph{Reformulation by integration.}
With regard to~\eqref{eq:GeneralModelLimit}, assuming interchangeability of space and time differentiation, we set 
\begin{subequations}
\label{eq:GeneralModelReformulationIntegrationF}
\begin{equation}
\begin{split}
F\big(\psi(t)\big) 
&= \partial_{tt} \psi(t) - \bbzero_1 \, \Delta \partial_{t} \psi(t) - \bb_3 \, \Delta \psi(t) \\
&\qquad + \bb_5(\sigma) \, \partial_{tt} \psi(t) \, \partial_{t} \psi(t) + 2 \, \bb_6(\sigma) \, \nabla \partial_{t} \psi(t) \cdot \nabla \psi(t)\,;
\end{split}
\end{equation}
straightforward differentiation shows that its time derivative is given by 
\begin{equation*}
\begin{split}
\partial_{t} F\big(\psi(t)\big) 
&= \partial_{ttt} \psi(t) - \bbzero_1 \, \Delta \partial_{tt} \psi(t) - \bb_3 \, \Delta \partial_{t} \psi(t) \\
&\qquad + \bb_5(\sigma) \, \partial_{ttt} \psi(t) \, \partial_{t} \psi(t) 
+ \bb_5(\sigma) \, \big(\partial_{tt} \psi(t)\big)^2 \\
&\qquad + 2 \, \bb_6(\sigma) \, \nabla \partial_{tt} \psi(t) \cdot \nabla \psi(t)
+ 2 \, \bb_6(\sigma) \, \absbig{\nabla \partial_{t} \psi(t)}^2 
\end{split}
\end{equation*}
and that~\eqref{eq:GeneralModel} rewrites as 
\begin{equation*}
\begin{split}
\partial_{t} F\big(\psi(t)\big) 
&= \big(\bba_1 - \bbzero_1\big)\, \Delta \partial_{tt} \psi(t) 
- \bba_2(\sigma_0) \, \Delta^2 \partial_{t} \psi(t) - \bba_4(\sigma_0) \, \Delta^2 \psi(t)\,.
\end{split}
\end{equation*}
Provided that the prescribed initial data are sufficiently regular and satisfy the consistency condition 
\begin{equation}
\psi_2 - \bbzero_1 \, \Delta \psi_1 - \bb_3 \, \Delta \psi_0 + \bb_5(\sigma) \, \psi_2 \, \psi_1 + 2 \, \bb_6(\sigma) \, \nabla \psi_1 \cdot \nabla \psi_0 = 0
\end{equation}
such that $F(\psi(0)) = 0$, integration with respect to time implies
\begin{equation}
\begin{split}
F\big(\psi(t)\big) 
&= \big(\bba_1 - \bbzero_1\big)\, \big(\Delta \partial_{t} \psi(t) - \Delta \psi_1\big) \\
&\qquad - \bba_2(\sigma_0) \, \big(\Delta^2 \psi(t) - \Delta^2 \psi_0\big)
- \bba_4(\sigma_0) \int_{0}^{t} \Delta^2 \psi(\tau) \; \dd\tau\,.
\end{split}
\end{equation}
\end{subequations}

\paragraph{Reformulation by differentiation.}
A reformulation of~\eqref{eq:GeneralModel} is obtained by straightforward differentiation of the nonlinear term;  
supressing for the sake of notational simplicity the dependence on~$\psi$ and $\sigma \in \{0,1\}$, we set 
\begin{subequations}
\label{eq:GeneralModelReformulationDifferentiation}
\begin{equation}
\begin{split}
\alpha(t) &= 1 + \bb_5(\sigma) \, \partial_{t} \psi(t)\,, \\
r(t) &= \bb_5(\sigma) \, \big(\partial_{tt} \psi(t)\big)^2 + \bb_6(\sigma) \, \partial_{tt} \abs{\nabla \psi(t)}^2 \\
&= \bb_5(\sigma) \, \big(\partial_{tt} \psi(t)\big)^2 + 2 \, \bb_6(\sigma) \, \partial_{t} \big(\nabla \partial_{t} \psi(t) \cdot \nabla \psi(t)\big) \\
&= \bb_5(\sigma) \, \big(\partial_{tt} \psi(t)\big)^2 + 2 \, \bb_6(\sigma) \, \nabla \partial_{tt} \psi(t) \cdot \nabla \psi(t)
+ 2 \, \bb_6(\sigma) \, \absbig{\nabla \partial_{t} \psi(t)}^2\,, 
\end{split} 
\end{equation}
and, as a consequence, we obtain the relation 
\begin{equation}
\label{eq:GeneralModelReformulationDifferentiation1}
\begin{split}
&\alpha(t) \, \partial_{ttt} \psi(t) 
- \bba_1 \, \Delta \partial_{tt} \psi(t) 
+ \bba_2(\sigma_0) \, \Delta^2 \partial_{t} \psi(t) 
- \bb_3 \, \Delta \partial_{t} \psi(t) \\
&\quad + \bba_4(\sigma_0) \, \Delta^2 \psi(t) 
+ r(t) 
= 0\,;
\end{split}
\end{equation}
provided that non-degeneracy of $\alpha(t)$ is ensured, this further yields 
\begin{equation}
\label{eq:GeneralModelReformulationDifferentiation2}
\begin{split}
&\partial_{ttt} \psi(t) 
- \bba_1 \, \tfrac{1}{\alpha(t)} \, \Delta \partial_{tt} \psi(t) 
+ \bba_2(\sigma_0) \, \tfrac{1}{\alpha(t)} \, \Delta^2 \partial_{t} \psi(t) 
- \bb_3 \, \tfrac{1}{\alpha(t)} \, \Delta \partial_{t} \psi(t) \\
&\quad + \bba_4(\sigma_0) \, \tfrac{1}{\alpha(t)} \, \Delta^2 \psi(t) 
+ \tfrac{1}{\alpha(t)} \, r(t) 
= 0\,.
\end{split}
\end{equation}
\end{subequations}

\paragraph{Fixed-point argument.}
Our approach for the derivation of a priori energy estimates uses a fixed-point argument based on a suitable modification 
of~\eqref{eq:GeneralModelReformulationDifferentiation}; 
that is, we consider two functions~$\phi$ and~$\psi$ that satisfy the initial conditions 
\begin{equation}
\label{eq:InitialConditionphipsi}
\phi(0) = \psi(0) = \psi_0\,, \quad \partial_{t} \phi(0) = \partial_{t} \psi(0) = \psi_1\,, \quad \partial_{tt} \phi(0) = \partial_{tt} \psi(0) = \psi_2\,, 
\end{equation}
and replace~$\alpha$ and~$r$ in relations~\eqref{eq:GeneralModelReformulationDifferentiation1} and~\eqref{eq:GeneralModelReformulationDifferentiation2} by 
\begin{equation}
\label{eq:alpha_r_phi}
\begin{split}
\alphaphi(t) &= 1 + \bb_5(\sigma) \, \partial_{t} \phi(t)\,, \\
\rphi(t) 
&= \bb_5(\sigma) \, \partial_{tt} \psi(t) \, \partial_{tt} \phi(t) 
+ 2 \, \bb_6(\sigma) \, \nabla \partial_{tt} \psi(t) \cdot \nabla \phi(t) \\
&\qquad + 2 \, \bb_6(\sigma) \, \nabla \partial_{t} \psi(t) \cdot \nabla \partial_{t} \phi(t)\,.
\end{split} 
\end{equation}

\paragraph{Reformulation by testing.}
Our starting point is~\eqref{eq:GeneralModelReformulationDifferentiation1} with~$\alpha$ and~$r$ substituted by~$\alphaphi$ and~$\rphi$;
testing with~$\partial_{tt} \psi(t)$ yields 
\begin{equation*}
\begin{split}
&\scpbig{\Ltwo}{\alphaphi(t) \, \partial_{ttt} \psi(t)}{\partial_{tt} \psi(t)} 
- \bba_1 \, \scpbig{\Ltwo}{\Delta \partial_{tt} \psi(t)}{\partial_{tt} \psi(t)} \\
&\quad + \bba_2(\sigma_0) \, \scpbig{\Ltwo}{\Delta^2 \partial_{t} \psi(t)}{\partial_{tt} \psi(t)} 
- \bb_3 \, \scpbig{\Ltwo}{\Delta \partial_{t} \psi(t)}{\partial_{tt} \psi(t)} \\
&\quad + \bba_4(\sigma_0) \, \scpbig{\Ltwo}{\Delta^2 \psi(t)}{\partial_{tt} \psi(t)} 
+ \scpbig{\Ltwo}{\rphi(t)}{\partial_{tt} \psi(t)} 
= 0\,. 
\end{split}
\end{equation*}
In order to rewrite this relation as the time derivative of a function plus additional terms, we apply the identity 
\begin{equation*}
\begin{split}
\scpbig{\Ltwo}{\alphaphi(t) \, \partial_{ttt} \psi(t)}{\partial_{tt} \psi(t)}
&= \tfrac{1}{2} \, \partial_{t} \normBig{\Ltwo}{\sqrt{\alphaphi(t)} \, \partial_{tt} \psi(t)}^2 \\
&\qquad - \tfrac{1}{2} \, \scpbig{\Ltwo}{\partial_{t} \alphaphi(t) \, \partial_{tt} \psi(t)}{\partial_{tt} \psi(t)}\,;
\end{split}
\end{equation*}
under assumption~\eqref{eq:AssumptionDirichletLower}, integration-by-parts implies 
\begin{equation*}
\begin{split}
&\scpbig{\Ltwo}{\Delta \partial_{tt} \psi(t)}{\partial_{tt} \psi(t)} = - \, \normbig{\Ltwo}{\nabla \partial_{tt} \psi(t)}^2\,, \\
&\scpbig{\Ltwo}{\Delta^2 \partial_{t} \psi(t)}{\partial_{tt} \psi(t)} = \scpbig{\Ltwo}{\Delta \partial_{t} \psi(t)}{\Delta \partial_{tt} \psi(t)}
= \tfrac{1}{2} \, \partial_{t} \normbig{\Ltwo}{\Delta \partial_{t} \psi(t)}^2\,, \\
&\scpbig{\Ltwo}{\Delta \partial_{t} \psi(t)}{\partial_{tt} \psi(t)} = - \, \scpbig{\Ltwo}{\nabla \partial_{t} \psi(t)}{\nabla \partial_{tt} \psi(t)} 
= - \, \tfrac{1}{2} \, \partial_{t} \normbig{\Ltwo}{\nabla \partial_{t} \psi(t)}^2 \\
&\scpbig{\Ltwo}{\Delta^2 \psi(t)}{\partial_{tt} \psi(t)} = \scpbig{\Ltwo}{\Delta \psi(t)}{\Delta \partial_{tt} \psi(t)} \\
&\quad = \partial_{t} \scpbig{\Ltwo}{\Delta \partial_{t} \psi(t)}{\Delta \psi(t)} - \normbig{\Ltwo}{\Delta \partial_{t} \psi(t)}^2\,. 
\end{split}
\end{equation*} 
As a consequence, we have 
\begin{equation*}
\begin{split}
&\tfrac{1}{2} \, \partial_{t} \normBig{\Ltwo}{\sqrt{\alphaphi(t)} \, \partial_{tt} \psi(t)}^2
+ \bba_1 \, \normbig{\Ltwo}{\nabla \partial_{tt} \psi(t)}^2 
+ \tfrac{\bba_2(\sigma_0)}{2} \, \partial_{t} \normbig{\Ltwo}{\Delta \partial_{t} \psi(t)}^2 \\
&\quad + \tfrac{\bb_3}{2} \, \partial_{t} \normbig{\Ltwo}{\nabla \partial_{t} \psi(t)}^2 
+ \bba_4(\sigma_0) \, \partial_{t} \scpbig{\Ltwo}{\Delta \partial_{t} \psi(t)}{\Delta \psi(t)} 
- \bba_4(\sigma_0) \, \normbig{\Ltwo}{\Delta \partial_{t} \psi(t)}^2 \\
&\quad + \scpbig{\Ltwo}{\rphi(t) - \tfrac{1}{2} \, \partial_t \alphaphi(t) \, \partial_{tt} \psi(t)}{\partial_{tt} \psi(t)}
= 0\,; 
\end{split}
\end{equation*}
by means of the abbreviation 
\begin{subequations}
\label{eq:GeneralModelReformulationTest1}
\begin{equation}
\begin{split}
\MywEphipsi{0}{t} &= \tfrac{1}{2} \, \normBig{\Ltwo}{\sqrt{\alphaphi(t)} \, \partial_{tt} \psi(t)}^2
+ \tfrac{\bba_2(\sigma_0)}{2} \, \normbig{\Ltwo}{\Delta \partial_{t} \psi(t)}^2 \\
&\qquad + \tfrac{\bb_3}{2} \, \normbig{\Ltwo}{\nabla \partial_{t} \psi(t)}^2\,,
\end{split}
\end{equation}
the following relation results 
\begin{equation*}
\begin{split}
&\partial_{t} \MywEphipsi{0}{t} 
+ \bba_1 \, \normbig{\Ltwo}{\nabla \partial_{tt} \psi(t)}^2 \\
&\quad = - \, \bba_4(\sigma_0) \, \partial_{t} \scpbig{\Ltwo}{\Delta \partial_{t} \psi(t)}{\Delta \psi(t)} 
+ \bba_4(\sigma_0) \, \normbig{\Ltwo}{\Delta \partial_{t} \psi(t)}^2 \\
&\quad\qquad 
- \scpbig{\Ltwo}{\rphi(t) - \tfrac{1}{2} \, \partial_t \alphaphi(t) \, \partial_{tt} \psi(t)}{\partial_{tt} \psi(t)}\,.
\end{split}
\end{equation*}
Integration with respect to time finally yields 
\begin{equation}
\begin{split}
&\MywEphipsi{0}{t} + \bba_1 \int_{0}^{t} \normbig{\Ltwo}{\nabla \partial_{tt} \psi(\tau)}^2 \; \dd\tau \\
&\quad = \MywEphipsiAtzero{0}
+ \bba_4(\sigma_0) \, \scpbig{\Ltwo}{\Delta \psi_1}{\Delta \psi_0}
- \bba_4(\sigma_0) \, \scpbig{\Ltwo}{\Delta \partial_{t} \psi(t)}{\Delta \psi(t)} \\
&\quad\qquad + \bba_4(\sigma_0) \int_{0}^{t} \normbig{\Ltwo}{\Delta \partial_{t} \psi(\tau)}^2 \; \dd\tau \\
&\quad\qquad - \int_{0}^{t} \scpbig{\Ltwo}{\rphi(\tau) - \tfrac{1}{2} \, \partial_t \alphaphi(\tau) \, \partial_{tt} \psi(\tau)}{\partial_{tt} \psi(\tau)} \; \dd\tau\,;
\end{split}
\end{equation}
note that we here set 
\begin{equation}
\MywEphipsiAtzero{0} = \tfrac{1}{2} \, \normBig{\Ltwo}{\sqrt{1 + \bb_5(\sigma) \, \psi_1} \, \psi_2}^2
+ \tfrac{\bba_2(\sigma_0)}{2} \, \normbig{\Ltwo}{\Delta \psi_1}^2 
+ \tfrac{\bb_3}{2} \, \normbig{\Ltwo}{\nabla \psi_1}^2\,.
\end{equation}
\end{subequations}

\paragraph{Further reformulation by testing.}
On the other hand, we substitute~$\alpha$ and~$r$ in~\eqref{eq:GeneralModelReformulationDifferentiation2} by~$\alphaphi$ and~$\rphi$; 
by testing with~$\Delta \partial_{tt} \psi(t)$, we obtain 
\begin{equation*}
\begin{split}
&\scpbig{\Ltwo}{\partial_{ttt} \psi(t)}{\Delta \partial_{tt} \psi(t)} 
- \bba_1 \, \normBig{\Ltwo}{\tfrac{1}{\sqrt{\alphaphi(t)}} \, \Delta \partial_{tt} \psi(t)}^2 \\
&\quad + \bba_2(\sigma_0) \, \scpbig{\Ltwo}{\tfrac{1}{\alphaphi(t)} \, \Delta^2 \partial_{t} \psi(t)}{\Delta \partial_{tt} \psi(t)} 
- \bb_3 \, \scpBig{\Ltwo}{\tfrac{1}{\alphaphi(t)} \, \Delta \partial_{t} \psi(t)}{\Delta \partial_{tt} \psi(t)} \\
&\quad + \bba_4(\sigma_0) \, \scpBig{\Ltwo}{\tfrac{1}{\alphaphi(t)} \, \Delta^2 \psi(t)}{\Delta \partial_{tt} \psi(t)}
+ \scpBig{\Ltwo}{\tfrac{1}{\alphaphi(t)} \, \rphi(t)}{\Delta \partial_{tt} \psi(t)} 
= 0\,.
\end{split}
\end{equation*}
Similarly to before, we employ integration-by-parts under assumption~\eqref{eq:AssumptionDirichletHigher} and replace the arising space and time derivatives of~$\frac{1}{\alphaphi}$ by
\begin{equation*}
\nabla \tfrac{1}{\alphaphi(t)} = - \, \bb_5(\sigma) \, \tfrac{1}{(\alphaphi(t))^2} \, \nabla \partial_{t} \phi(t)\,, \quad 
\partial_{t} \tfrac{1}{\alphaphi(t)} = - \, \bb_5(\sigma) \, \tfrac{1}{(\alphaphi(t))^2} \, \partial_{tt} \phi(t)\,;
\end{equation*}
this yields the identities
\begin{equation*}
\begin{split}
&\scpbig{\Ltwo}{\partial_{ttt} \psi(t)}{\Delta \partial_{tt} \psi(t)} = - \, \scpbig{\Ltwo}{\nabla \partial_{ttt} \psi(t)}{\nabla \partial_{tt} \psi(t)} 
= - \, \tfrac{1}{2} \, \partial_{t} \normbig{\Ltwo}{\nabla \partial_{tt} \psi(t)}^2\,, \\
&\scpBig{\Ltwo}{\tfrac{1}{\alphaphi(t)} \, \Delta^2 \partial_{t} \psi(t)}{\Delta \partial_{tt} \psi(t)}
= - \, \scpBig{\Ltwo}{\nabla \Delta \partial_{t} \psi(t)}{\nabla \big(\tfrac{1}{\alphaphi(t)} \, \Delta \partial_{tt} \psi(t)\big)} \\
&\quad = - \, \scpBig{\Ltwo}{\nabla \Delta \partial_{t} \psi(t)}{\nabla \tfrac{1}{\alphaphi(t)} \, \Delta \partial_{tt} \psi(t)}
- \scpBig{\Ltwo}{\nabla \Delta \partial_{t} \psi(t)}{\tfrac{1}{\alphaphi(t)} \, \nabla \Delta \partial_{tt} \psi(t)} \\
&\quad = - \, \scpBig{\Ltwo}{\nabla \tfrac{1}{\alphaphi(t)}}{\Delta \partial_{tt} \psi(t) \, \nabla \Delta \partial_{t} \psi(t)} \\
&\quad\qquad - \tfrac{1}{2} \, \partial_{t} \normBig{\Ltwo}{\tfrac{1}{\sqrt{\alphaphi(t)}} \, \nabla \Delta \partial_{t} \psi(t)}^2 
+ \tfrac{1}{2} \, \scpBig{\Ltwo}{\partial_{t} \tfrac{1}{\alphaphi(t)}}{\absbig{\nabla \Delta \partial_{t} \psi(t)}^2} \\
&\quad = - \, \tfrac{1}{2} \, \partial_{t} \normBig{\Ltwo}{\tfrac{1}{\sqrt{\alphaphi(t)}} \, \nabla \Delta \partial_{t} \psi(t)}^2 \\
&\quad\qquad 
+ \bb_5(\sigma) \, \scpBig{\Ltwo}{\tfrac{1}{(\alphaphi(t))^2}}{\Delta \partial_{tt} \psi(t) \, \nabla \Delta \partial_{t} \psi(t) \cdot \nabla \partial_{t} \phi(t)} \\
&\quad\qquad - \, \tfrac{\bb_5(\sigma)}{2} \, \scpBig{\Ltwo}{\tfrac{1}{(\alphaphi(t))^2}}{\partial_{tt} \phi(t) \, \absbig{\nabla \Delta \partial_{t} \psi(t)}^2}\,, \\
&\scpBig{\Ltwo}{\tfrac{1}{\alphaphi(t)} \, \Delta \partial_{t} \psi(t)}{\Delta \partial_{tt} \psi(t)} \\
&\quad = \tfrac{1}{2} \, \partial_{t} \normBig{\Ltwo}{\tfrac{1}{\sqrt{\alphaphi(t)}} \, \Delta \partial_{t} \psi(t)}^2 
- \tfrac{1}{2} \, \scpBig{\Ltwo}{\partial_{t} \tfrac{1}{\alphaphi(t)}}{\big(\Delta \partial_{t} \psi(t)\big)^2} \\
&\quad = \tfrac{1}{2} \, \partial_{t} \normBig{\Ltwo}{\tfrac{1}{\sqrt{\alphaphi(t)}} \, \Delta \partial_{t} \psi(t)}^2 
+ \tfrac{\bb_5(\sigma)}{2} \, \scpBig{\Ltwo}{\tfrac{1}{(\alphaphi(t))^2}}{\partial_{tt} \phi(t) \, \big(\Delta \partial_{t} \psi(t)\big)^2}\,;
\end{split}
\end{equation*}
furthermore, we make use of the relation
\begin{equation*}
\begin{split}
&\scpBig{\Ltwo}{\tfrac{1}{\alphaphi(t)} \, \Delta^2 \psi(t)}{\Delta \partial_{tt} \psi(t)}
= - \, \scpBig{\Ltwo}{\nabla \Delta \psi(t)}{\nabla \big(\tfrac{1}{\alphaphi(t)} \, \Delta \partial_{tt} \psi(t)\big)} \\
&\quad = - \, \scpBig{\Ltwo}{\nabla \Delta \psi(t)}{\nabla \tfrac{1}{\alphaphi(t)} \, \Delta \partial_{tt} \psi(t)}
- \scpBig{\Ltwo}{\nabla \Delta \psi(t)}{\tfrac{1}{\alphaphi(t)} \, \nabla \Delta \partial_{tt} \psi(t)} \\
&\quad= - \, \scpBig{\Ltwo}{\nabla \tfrac{1}{\alphaphi(t)}}{\Delta \partial_{tt} \psi(t) \, \nabla \Delta \psi(t)} 
- \partial_{t} \scpBig{\Ltwo}{\tfrac{1}{\alphaphi(t)}}{\nabla \Delta \partial_{t} \psi(t) \cdot \nabla \Delta \psi(t)} \\
&\quad\qquad + \scpBig{\Ltwo}{\partial_{t} \tfrac{1}{\alphaphi(t)}}{\nabla \Delta \partial_{t} \psi(t) \cdot \nabla \Delta \psi(t)}
+ \normBig{\Ltwo}{\tfrac{1}{\sqrt{\alphaphi(t)}} \, \nabla \Delta \partial_{t} \psi(t)}^2 \\
&\quad= - \, \partial_{t} \scpBig{\Ltwo}{\tfrac{1}{\alphaphi(t)}}{\nabla \Delta \partial_{t} \psi(t) \cdot \nabla \Delta \psi(t)} \\
&\quad\qquad + \bb_5(\sigma) \, \scpBig{\Ltwo}{\tfrac{1}{(\alphaphi(t))^2}}{\Delta \partial_{tt} \psi(t) \, \nabla \partial_{t} \phi(t) \cdot \nabla \Delta \psi(t)} \\
&\quad\qquad - \, \bb_5(\sigma) \, \scpBig{\Ltwo}{\tfrac{1}{(\alphaphi(t))^2}}{\partial_{tt} \phi(t) \, \nabla \Delta \partial_{t} \psi(t) \cdot \nabla \Delta \psi(t)} \\
&\quad\qquad + \normBig{\Ltwo}{\tfrac{1}{\sqrt{\alphaphi(t)}} \, \nabla \Delta \partial_{t} \psi(t)}^2\,.
\end{split}
\end{equation*}
With the help of the abbreviation 
\begin{subequations}
\label{eq:GeneralModelReformulationTest2}
\begin{equation}
\begin{split}
\MywEphipsi{1}{t} &= \tfrac{1}{2} \, \normbig{\Ltwo}{\nabla \partial_{tt} \psi(t)}^2
+ \tfrac{\bba_2(\sigma_0)}{2} \, \normBig{\Ltwo}{\tfrac{1}{\sqrt{\alphaphi(t)}} \, \nabla \Delta \partial_{t} \psi(t)}^2 \\
&\qquad + \tfrac{\bb_3}{2} \, \normBig{\Ltwo}{\tfrac{1}{\sqrt{\alphaphi(t)}} \, \Delta \partial_{t} \psi(t)}^2\,, 
\end{split}
\end{equation}
we thus obtain 
\begin{equation*}
\begin{split}
&\partial_{t} \MywEphipsi{1}{t} 
+ \bba_1 \, \normBig{\Ltwo}{\tfrac{1}{\sqrt{\alphaphi(t)}} \, \Delta \partial_{tt} \psi(t)}^2 \\
&\quad = - \, \bba_4(\sigma_0) \, \partial_{t} \scpBig{\Ltwo}{\tfrac{1}{\alphaphi(t)}}{\nabla \Delta \partial_{t} \psi(t) \cdot \nabla \Delta \psi(t)} \\
&\quad\qquad + \bba_4(\sigma_0) \, \normBig{\Ltwo}{\tfrac{1}{\sqrt{\alphaphi(t)}} \, \nabla \Delta \partial_{t} \psi(t)}^2 \\
&\quad\qquad + \scpBig{\Ltwo}{\tfrac{1}{\alphaphi(t)} \, \rphi(t)}{\Delta \partial_{tt} \psi(t)} \\
&\quad\qquad + \bba_2(\sigma_0) \, \bb_5(\sigma) \, \scpBig{\Ltwo}{\tfrac{1}{(\alphaphi(t))^2}}{\Delta \partial_{tt} \psi(t) \, \nabla \Delta \partial_{t} \psi(t) \cdot \nabla \partial_{t} \phi(t)} \\
&\quad\qquad - \tfrac{\bba_2(\sigma_0) \, \bb_5(\sigma)}{2} \, \scpBig{\Ltwo}{\tfrac{1}{(\alphaphi(t))^2}}{\partial_{tt} \phi(t) \, \absbig{\nabla \Delta \partial_{t} \psi(t)}^2} \\
&\quad\qquad - \tfrac{\bb_3 \, \bb_5(\sigma)}{2} \, \scpBig{\Ltwo}{\tfrac{1}{(\alphaphi(t))^2}}{\partial_{tt} \phi(t) \, \big(\Delta \partial_{t} \psi(t)\big)^2} \\
&\quad\qquad + \bba_4(\sigma_0) \, \bb_5(\sigma) \, \scpBig{\Ltwo}{\tfrac{1}{(\alphaphi(t))^2}}{\Delta \partial_{tt} \psi(t) \, \nabla \partial_{t} \phi(t) \cdot \nabla \Delta \psi(t)} \\
&\quad\qquad - \bba_4(\sigma_0) \, \bb_5(\sigma) \, \scpBig{\Ltwo}{\tfrac{1}{(\alphaphi(t))^2}}{\partial_{tt} \phi(t) \, \nabla \Delta \partial_{t} \psi(t) \cdot \nabla \Delta \psi(t)}\,. 
\end{split}
\end{equation*}
Performing integration with respect to time, finally leads to 
\begin{equation}
\begin{split}
&\MywEphipsi{1}{t} + \bba_1 \int_{0}^{t} \normBig{\Ltwo}{\tfrac{1}{\sqrt{\alphaphi(\tau)}} \, \Delta \partial_{tt} \psi(\tau)}^2 \; \dd\tau \\
&\quad = \MywEphipsiAtzero{1} 
+ \bba_4(\sigma_0) \, \scpBig{\Ltwo}{\tfrac{1}{\alphaphi(0)}}{\nabla \Delta \psi_1 \cdot \nabla \Delta \psi_0} \\
&\qquad - \, \bba_4(\sigma_0) \, \scpBig{\Ltwo}{\tfrac{1}{\alphaphi(t)}}{\nabla \Delta \partial_{t} \psi(t) \cdot \nabla \Delta \psi(t)} \\
&\qquad + \bba_4(\sigma_0) \int_{0}^{t} \normBig{\Ltwo}{\tfrac{1}{\sqrt{\alphaphi(\tau)}} \, \nabla \Delta \partial_{t} \psi(\tau)}^2 \; \dd\tau \\
&\qquad + \int_{0}^{t} \scpBig{\Ltwo}{\tfrac{1}{\alphaphi(\tau)} \, \rphi(\tau)}{\Delta \partial_{tt} \psi(\tau)} \; \dd\tau \\
&\qquad + \bba_2(\sigma_0) \, \bb_5(\sigma) 
\int_{0}^{t} \scpBig{\Ltwo}{\tfrac{1}{(\alphaphi(\tau))^2}}{\Delta \partial_{tt} \psi(\tau) \, \nabla \Delta \partial_{t} \psi(\tau) \cdot \nabla \partial_{t} \phi(\tau)} \; \dd\tau \\
&\qquad - \, \tfrac{\bba_2(\sigma_0) \bb_5(\sigma)}{2} \int_{0}^{t} \scpBig{\Ltwo}{\tfrac{1}{(\alphaphi(\tau))^2}}{\partial_{tt} \phi(\tau) \, \absbig{\nabla \Delta \partial_{t} \psi(\tau)}^2} \; \dd\tau \\
&\qquad
- \tfrac{\bb_3 \bb_5(\sigma)}{2} \int_{0}^{t} \scpBig{\Ltwo}{\tfrac{1}{(\alphaphi(\tau))^2}}{\partial_{tt} \phi(\tau) \, \big(\Delta \partial_{t} \psi(\tau)\big)^2} \; \dd\tau \\
&\qquad + \bba_4(\sigma_0) \, \bb_5(\sigma) \int_{0}^{t} \scpBig{\Ltwo}{\tfrac{1}{(\alphaphi(\tau))^2}}{\Delta \partial_{tt} \psi(\tau) \, \nabla \partial_{t} \phi(\tau) \cdot \nabla \Delta \psi(\tau)} \; \dd\tau \\
&\qquad - \, \bba_4(\sigma_0) \, \bb_5(\sigma) \int_{0}^{t} \scpBig{\Ltwo}{\tfrac{1}{(\alphaphi(\tau))^2}}{\partial_{tt} \phi(\tau) \, \nabla \Delta \partial_{t} \psi(\tau) \cdot \nabla \Delta \psi(\tau)} \; \dd\tau\,; 
\end{split}
\end{equation}
similarly to before, we here set 
\begin{equation}
\begin{split}
\MywEphipsiAtzero{1} &= \tfrac{1}{2} \, \normbig{\Ltwo}{\nabla \psi_2}^2
+ \tfrac{\bba_2(\sigma_0)}{2} \, \normBig{\Ltwo}{\tfrac{1}{\sqrt{1 + \bb_5(\sigma) \, \psi_1}} \, \nabla \Delta \psi_1}^2 \\
&\qquad + \tfrac{\bb_3}{2} \, \normBig{\Ltwo}{\tfrac{1}{\sqrt{1 + \bb_5(\sigma) \, \psi_1 \alphaphi(t)}} \, \Delta \psi_1}^2\,, 
\end{split}
\end{equation}
\end{subequations}
\subsection{Energy estimates}
\paragraph{Objective.}
In the following, we deduce a priori estimates for the energy functionals 
\begin{subequations}
\label{eq:E0E1E2}
\begin{equation}
\begin{split}
\MyEphipsi{0}{t}
&= \tfrac{1}{2} \, \normBig{\Ltwo}{\sqrt{\alphaphi(t)} \, \partial_{tt} \psi(t)}^2
+ \tfrac{\bba_2(\sigma_0)}{4} \, \normbig{\Ltwo}{\Delta \partial_{t} \psi(t)}^2 \\
&\qquad + \tfrac{\bb_3}{2} \, \normbig{\Ltwo}{\nabla \partial_{t} \psi(t)}^2\,, \\
\MyEphipsi{1}{t}
&= \tfrac{1}{2} \, \normbig{\Ltwo}{\nabla \partial_{tt} \psi(t)}^2
+ \tfrac{\bba_2(\sigma_0)}{4} \, \normBig{\Ltwo}{\tfrac{1}{\sqrt{\alphaphi(t)}} \, \nabla \Delta \partial_{t} \psi(t)}^2 \\
&\qquad + \tfrac{\bb_3}{2} \, \normBig{\Ltwo}{\tfrac{1}{\sqrt{\alphaphi(t)}} \, \Delta \partial_{t} \psi(t)}^2\,, 
\end{split}
\end{equation}
on bounded time intervals $[0, T]$;
recall that $\alphaphi = 1 + \bb_5(\sigma) \, \partial_{t} \phi$ and note that the values at the initial time are given by 
\begin{equation}
\begin{split}
\MyEphipsiAtzero{0}
&= \tfrac{1}{2} \, \normBig{\Ltwo}{\sqrt{1 + \bb_5(\sigma) \, \psi_1} \, \psi_2}^2
+ \tfrac{\bba_2(\sigma_0)}{4} \, \normbig{\Ltwo}{\Delta \psi_1}^2 \\
&\qquad + \tfrac{\bb_3}{2} \, \normbig{\Ltwo}{\nabla \psi_1}^2\,, \\
\MyEphipsiAtzero{1}
&= \tfrac{1}{2} \, \normbig{\Ltwo}{\nabla \psi_2}^2
+ \tfrac{\bba_2(\sigma_0)}{4} \, \normBig{\Ltwo}{\tfrac{1}{\sqrt{1 + \bb_5(\sigma) \, \psi_1}} \, \nabla \Delta \psi_1}^2 \\
&\qquad + \tfrac{\bb_3}{2} \, \normBig{\Ltwo}{\tfrac{1}{\sqrt{1 + \bb_5(\sigma) \, \psi_1}} \, \Delta \psi_1}^2\,, 
\end{split}
\end{equation}
see~\eqref{eq:InitialConditionphipsi}.
In order to keep the formulas short, we introduce auxiliary abbreviations for the basic components 
\begin{equation}
\begin{split}
\MyEphipsi{01}{t} &= \normBig{\Ltwo}{\sqrt{\alphaphi(t)} \, \partial_{tt} \psi(t)}^2\,, \\
\MyEphipsi{02}{t} &= \normbig{\Ltwo}{\Delta \partial_{t} \psi(t)}^2\,, \\
\MyEphipsi{03}{t} &= \normbig{\Ltwo}{\nabla \partial_{t} \psi(t)}^2\,, \\
\MyEphipsi{0}{t} &= \tfrac{1}{2} \, \MyEphipsi{01}{t} + \tfrac{\bba_2(\sigma_0)}{4} \, \MyEphipsi{02}{t} \\
&\qquad + \tfrac{\bb_3}{2} \, \MyEphipsi{03}{t}\,, \\
\MyEphipsi{11}{t} &= \normbig{\Ltwo}{\nabla \partial_{tt} \psi(t)}^2\,, \\
\MyEphipsi{12}{t} &= \normBig{\Ltwo}{\tfrac{1}{\sqrt{\alphaphi(t)}} \, \nabla \Delta \partial_{t} \psi(t)}^2\,, \\
\MyEphipsi{13}{t} &= \normBig{\Ltwo}{\tfrac{1}{\sqrt{\alphaphi(t)}} \, \Delta \partial_{t} \psi(t)}^2\,, \\
\MyEphipsi{1}{t} &= \tfrac{1}{2} \, \MyEphipsi{11}{t} + \tfrac{\bba_2(\sigma_0)}{4} \, \MyEphipsi{12}{t} \\
&\qquad + \tfrac{\bb_3}{2} \, \MyEphipsi{13}{t}\,;
\end{split}
\end{equation}
we in particular apply the relations 
\begin{equation}
\begin{gathered}
\MyEphipsi{01}{t} \leq 2 \, \MyEphipsi{0}{t}\,, \quad 
\MyEphipsi{03}{t} \leq \tfrac{2}{\bb_3} \, \MyEphipsi{0}{t}\,, \\
\MyEphipsi{11}{t} \leq 2 \, \MyEphipsi{1}{t}\,, \quad 
\MyEphipsi{13}{t} \leq \tfrac{2}{\bb_3} \, \MyEphipsi{1}{t}\,.
\end{gathered}
\end{equation}
Moreover, we denote  
\begin{equation}
\begin{gathered}
\MyEphipsi{20}{t} = \normBig{\Ltwo}{\tfrac{1}{\sqrt{\alphaphi(t)}} \, \Delta \partial_{tt} \psi(t)}^2\,, \\
\MyEphipsi{2}{t} = \tfrac{1}{4} \, \MywEphipsi{2}{t} = \tfrac{\bba_1}{4} \, \MyEphipsi{20}{t}\,. 
\end{gathered}
\end{equation}
Our essential premise in the proof of Proposition~\ref{thm:Proposition} is boundedness of the energy functionals by positive constants~$\BoundE{0}, \BoundE{1}, \BoundE{2} > 0$, when inserting~$\phi$ twice 
\begin{equation}
\begin{gathered}
\sup_{t \in [0, T]} \MyEphiphi{0}{t} \leq \BoundE{0}\,, \quad 
\sup_{t \in [0, T]} \MyEphiphi{1}{t} \leq \BoundE{1}\,, \\
\int_{0}^{T} \MyEphiphi{2}{t} \; \dd t \leq \BoundE{2}\,;
\end{gathered}
\end{equation}
evidently, this yields the relations 
\begin{equation}
\begin{gathered}
\sup_{t \in [0, T]} \MyEphiphi{01}{t} \leq 2 \, \BoundE{0}\,, \quad \sup_{t \in [0, T]} \MyEphiphi{03}{t} \leq \tfrac{2}{\bb_3} \, \BoundE{0}\,, \\
\sup_{t \in [0, T]} \MyEphiphi{11}{t} \leq 2 \, \BoundE{1}\,, \quad \sup_{t \in [0, T]} \MyEphiphi{13}{t} \leq \tfrac{2}{\bb_3} \, \BoundE{1}\,.
\end{gathered}
\end{equation}
\end{subequations}
We note that $\bba_2(\sigma_0) \to 0$ if $a \to 0_{+}$;
for this reason, $E_{02}$ will be related to~$E_{13}$, employing uniform boundedness of~$\alphaphi$ from above and below.

\paragraph{Basic auxiliary estimates.}
Considering in the first instance regular bounded spatial domains $\Omega \subset \RR^3$, we exploit the Poincar\'{e}--Friedrichs inequality, the continuous embeddings $\Hone(\Omega) \hookrightarrow \Lsix(\Omega)$ as well as  $\Htwo(\Omega) \hookrightarrow \Linfty(\Omega)$, and assume elliptic regularity; 
the application of Hölder's inequality with exponent $p = 3$ and conjugate exponent $p^{*} = \frac{p}{p-1} = \frac{3}{2}$ also shows $\Hone(\Omega) \hookrightarrow \Lsix(\Omega) \hookrightarrow \Lfour(\Omega)$, since 
\begin{equation*}
\normbig{\Lfour}{f}^4 = \int_{\Omega} \big(f(x)\big)^4 \; \dd x
\leq \bigg(\int_{\Omega} 1 \; \dd x\bigg)^{\frac{1}{p}} \, \bigg(\int_{\Omega} \big(f(x)\big)^{4 p^{*}} \; \dd x\bigg)^{\frac{1}{p^{*}}} 
= \abs{\Omega}^{\frac{1}{3}} \, \normbig{\Lsix}{f}^{4}\,.
\end{equation*}
To summarise, we apply the estimates
\begin{equation}
\label{eq:PoincareEmbeddingEllipticity}
\begin{gathered}
\normbig{\Hone}{f} \leq C_{\text{PF}} \, \normbig{\Ltwo}{\nabla f}\,, \quad f \in \Honezero(\Omega)\,, \\
\normbig{\Lfour}{f} \leq C_{\Lfour \leftarrow \Hone} \, \normbig{\Hone}{f}\,, \quad 
\normbig{\Lsix}{f} \leq C_{\Lsix \leftarrow \Hone} \, \normbig{\Hone}{f}\,, \quad f \in \Hone(\Omega)\,, \\
\normbig{\Linfty}{f} \leq C_{\Linfty \leftarrow \Htwo} \, \normbig{\Htwo}{f}\,, \quad f \in \Htwo(\Omega)\,, \\
\normbig{\Htwo}{f} \leq C_{\Delta} \, \normbig{\Ltwo}{\Delta f}\,, \quad f \in \Htwo(\Omega) \cap \Honezero(\Omega)\,;
\end{gathered}
\end{equation}
in all cases, the arising constant depends on the space domain.

\paragraph{Gronwall-type inequality.}
In addition, we make use of the fact that a non-negative function $f: [0, T] \to \RR$ satisfying a differential equation of the form 
\begin{equation*}
f'(t) = \gamma^2 \, f(t) + g(t)
\end{equation*}
with positive weight $\gamma > 0$ and non-negative function $g: [0, T] \to \RR$ or the corresponding integral equation 
\begin{equation*}
f(t) = f(0) + \gamma^2 \int_{0}^{t} f(\tau) \; \dd\tau + \int_{0}^{t} g(\tau) \; \dd\tau\,,
\end{equation*}
respectively, is given by 
\begin{equation*}
f(t) = \ee^{\gamma^2 t} \, f(0) + \int_{0}^{t} \ee^{\gamma^2 (t - \tau)} \, g(\tau) \; \dd\tau
\end{equation*}
and in particular satisfies the bound 
\begin{equation*}
f(t) \leq \ee^{\gamma^2 t} \, \bigg(f(0) + \int_{0}^{t} g(\tau) \; \dd\tau\bigg)\,. 
\end{equation*}
We apply this relation to a function of the form $f(t) = \norm{\Ltwo}{\varphi(t)}^2$.
More precisely, integration with respect to time and straightforward estimation by Cauchy's inequality as well as Young's inequality with weight $\gamma > 0$ implies 
\begin{equation*}
\begin{split}
&\normbig{\Ltwo}{\varphi(t)}^2 
= \normbig{\Ltwo}{\varphi(0)}^2 
+ 2 \int_{0}^{t} \scpbig{\Ltwo}{\partial_{t} \varphi(\tau)}{\varphi(\tau)} \; \dd\tau \\
&\quad \leq \normbig{\Ltwo}{\varphi(0)}^2 
+ 2 \int_{0}^{t} \normbig{\Ltwo}{\partial_{t} \varphi(\tau)} \, \normbig{\Ltwo}{\varphi(\tau)} \; \dd\tau \\
&\quad \leq \normbig{\Ltwo}{\varphi(0)}^2 
+ \gamma^2 \int_{0}^{t} \normbig{\Ltwo}{\varphi(\tau)}^2 \; \dd\tau
+ \tfrac{1}{\gamma^2} \int_{0}^{t} \normbig{\Ltwo}{\partial_{t} \varphi(\tau)}^2 \; \dd\tau\,;
\end{split}
\end{equation*}
with the help of the above estimate and the special choice $\gamma = \tfrac{1}{\sqrt{t}}$ such that $\ee^{\gamma^2 t} = e \leq 3$ and $\tfrac{1}{\gamma^2} = t \leq T$, this further shows 
\begin{equation}
\label{eq:Gronwall}
\normbig{\Ltwo}{\varphi(t)}^2 \leq 3 \, \normbig{\Ltwo}{\varphi(0)}^2 + 3 \, T \int_{0}^{t} \normbig{\Ltwo}{\partial_{t} \varphi(\tau)}^2 \; \dd\tau\,.
\end{equation}
 
\paragraph{Auxiliary estimates ensuring non-degeneracy.}
We first prove that the time-dependent function $\alphaphi = 1 + \bb_5(\sigma) \, \partial_{t} \phi$ defined in~\eqref{eq:GeneralModelReformulationDifferentiation} is uniformly bounded from below and above   
\begin{subequations}
\label{eq:BoundednessAlpha}
\begin{equation}
0 < \underline{\alpha} = \tfrac{1}{2} \leq \normbig{\Linfty([0, T], \Linfty(\Omega))}{\alphaphi} \leq \overline{\alpha} = \tfrac{3}{2}\,, 
\end{equation}
provided that the upper bound for the higher-order energy functional on the considered time interval $[0, T]$ satisfies the smallness requirement 
\begin{equation}
C_0 \, \BoundE{1} \leq \tfrac{1}{12}\,, \quad C_0 = \tfrac{(C_{\Delta} C_{\Linfty \leftarrow \Htwo} \bb_5(\sigma))^2}{\bb_3}\,,
\end{equation}
see also~\eqref{eq:GeneralModel}, ~\eqref{eq:E0E1E2} and~\eqref{eq:PoincareEmbeddingEllipticity};
we point out that the arising constant $C_0 > 0$ does not dependent on $a > 0$.
With regard to the relation 
\begin{equation*}
\absBig{1 - \normbig{\Linfty}{\alphaphi(t) - 1}} \leq \normbig{\Linfty}{\alphaphi(t)} \leq 1 + \normbig{\Linfty}{\alphaphi(t) - 1}
\end{equation*}
obtained by triangular inequalities, it remains to show boundedness of $\norm{\Linfty}{\alphaphi(t) - 1}$ for any $t \in [0, T]$.
By means of~\eqref{eq:PoincareEmbeddingEllipticity}, we have 
\begin{equation*}
\begin{split}
&\normbig{\Linfty}{\alphaphi(t) - 1} = \bb_5(\sigma) \, \normbig{\Linfty}{\partial_{t} \phi(t)} 
\leq C_{\Linfty \leftarrow \Htwo} \, \bb_5(\sigma) \, \normbig{\Htwo}{\partial_{t} \phi(t)} \\
&\quad \leq C_{\Delta} \, C_{\Linfty \leftarrow \Htwo} \, \bb_5(\sigma) \, \normbig{\Ltwo}{\Delta \partial_{t} \phi(t)} \\
&\quad \leq C_{\Delta} \, C_{\Linfty \leftarrow \Htwo} \, \bb_5(\sigma) \, \normbig{\Linfty}{\sqrt{\alphaphi(t)}} \, 
\normbig{\Ltwo}{\tfrac{1}{\sqrt{\alphaphi(t)}} \, \Delta \partial_{t} \phi(t)} \\
&\quad \leq \sqrt{C_0} \, \sqrt{\bb_3 \, \MyEphiphi{13}{t}} \, \sqrt{\normbig{\Linfty}{\alphaphi(t)}} \\
&\quad \leq \sqrt{2 \, C_0 \, \BoundE{1}} \, \sqrt{1 + \normbig{\Linfty}{\alphaphi(t) - 1}}\,, 
\end{split}
\end{equation*}
see also~\eqref{eq:E0E1E2}.
Due to the smallness requirement $C_0 \, \BoundE{1} \leq \tfrac{1}{12}$, the positive solution to this inequality satisfies 
\begin{equation*}
\begin{gathered}
\eta \leq \sqrt{2 \, C_0 \, \BoundE{1}} \, \sqrt{1 + \eta}\,, \quad
\eta^2 - 2 \, C_0 \, \BoundE{1} \, \eta - 2 \, C_0 \, \BoundE{1} \leq 0\,, \\
\big(\eta - C_0 \, \BoundE{1}\big)^2 \leq \big(2 + C_0 \, \BoundE{1}\big) \, C_0 \, \BoundE{1}\,, \quad 
0 \leq \eta \leq C_0 \, \BoundE{1} + \sqrt{\big(2 + C_0 \, \BoundE{1}\big) \, C_0 \, \BoundE{1}} \leq \tfrac{1}{2}\,; 
\end{gathered}
\end{equation*}
this implies the stated relation, since  
\begin{equation*}
\tfrac{1}{2} \leq \absBig{1 - \normbig{\Linfty}{\alphaphi(t) - 1}} \leq \normbig{\Linfty}{\alphaphi(t)} \leq 1 + \normbig{\Linfty}{\alphaphi(t) - 1} \leq \tfrac{3}{2}\,, 
\end{equation*}
and in particular ensures non-degeneracy
\begin{equation}
0 < \tfrac{1}{\overline{\alpha}} = \tfrac{2}{3} \leq \normbig{\Linfty([0, T], \Linfty(\Omega))}{\tfrac{1}{\alphaphi}} \leq \tfrac{1}{\underline{\alpha}} = 2\,.
\end{equation}
\end{subequations}

\paragraph{Auxiliary estimate for nonlinearity.}
We next deduce an auxiliary estimate for the nonlinearity
\begin{equation*}
\rphi 
= \bb_5(\sigma) \, \partial_{tt} \psi \, \partial_{tt} \phi 
+ 2 \, \bb_6(\sigma) \, \nabla \partial_{tt} \psi \cdot \nabla \phi 
+ 2 \, \bb_6(\sigma) \, \nabla \partial_{t} \psi \cdot \nabla \partial_{t} \phi\,,
\end{equation*}
see~\eqref{eq:alpha_r_phi} and recall~\eqref{eq:E0E1E2}. 
The estimation of the first term uses Cauchy's inequality and relation~\eqref{eq:PoincareEmbeddingEllipticity}; 
that is, we have 
\begin{equation*}
\begin{split}
&\normBig{\Ltwo}{\partial_{tt} \psi(t) \, \partial_{tt} \phi(t)}^2 
\leq \normbig{\Lfour}{\partial_{tt} \psi(t)}^2 \, \normbig{\Lfour}{\partial_{tt} \phi(t)}^2
\leq C_{\Lfour \leftarrow \Hone}^4 \, \normbig{\Hone}{\partial_{tt} \psi(t)}^2 \, \normbig{\Hone}{\partial_{tt} \phi(t)}^2 \\
&\quad \leq C_{\text{PF}}^4 \, C_{\Lfour \leftarrow \Hone}^4 \, \normbig{\Ltwo}{\nabla \partial_{tt} \psi(t)}^2 \, \normbig{\Ltwo}{\nabla \partial_{tt} \phi(t)}^2 \\
&\quad \leq C_{\text{PF}}^4 \, C_{\Lfour \leftarrow \Hone}^4 \, \MyEphipsi{11}{t} \, \MyEphiphi{11}{t} \\
&\quad \leq 4 \, C_{\text{PF}}^4 \, C_{\Lfour \leftarrow \Hone}^4 \, \BoundE{1} \, \MyEphipsi{1}{t}\,. 
\end{split}
\end{equation*}
For the third term, we apply the same arguments and use boundedness of~$\alphaphi$ by $\overline{\alpha} = \frac{3}{2}$, see~\eqref{eq:BoundednessAlpha}, to obtain 
\begin{equation*}
\begin{split}
&\normBig{\Ltwo}{\nabla \partial_{t} \psi(t) \cdot \nabla \partial_{t} \phi(t)}^2 
\leq \normbig{\Lfour}{\nabla \partial_{t} \psi(t)}^2 \, \normbig{\Lfour}{\nabla \partial_{t} \phi(t)}^2 \\
&\quad \leq C_{\Lfour \leftarrow \Hone}^4 \, \normbig{\Hone}{\nabla \partial_{t} \psi(t)}^2 \, \normbig{\Hone}{\nabla \partial_{t} \phi(t)}^2 
\leq C_{\text{PF}}^4 \, C_{\Lfour \leftarrow \Hone}^4 \, \normbig{\Ltwo}{\Delta \partial_{t} \psi(t)}^2 \, \normbig{\Ltwo}{\Delta \partial_{t} \phi(t)}^2 \\
&\quad \leq C_{\text{PF}}^4 \, C_{\Lfour \leftarrow \Hone}^4 \, \normbig{\Linfty}{\alphaphi(t)}^2 \, 
\normbig{\Ltwo}{\tfrac{1}{\sqrt{\alphaphi(t)}} \, \Delta \partial_{t} \psi(t)}^2 \, \normbig{\Ltwo}{\tfrac{1}{\sqrt{\alphaphi(t)}} \, \Delta \partial_{t} \phi(t)}^2 \\
&\quad \leq C_{\text{PF}}^4 \, C_{\Lfour \leftarrow \Hone}^4 \, \overline{\alpha}^2 \, \MyEphipsi{13}{t} \, \MyEphiphi{13}{t} \\
&\quad \leq \tfrac{9 \, C_{\text{PF}}^4 C_{\Lfour \leftarrow \Hone}^4}{\bb_3^2} \, \BoundE{1} \, \MyEphipsi{1}{t}\,.
\end{split}
\end{equation*}
For the second term, we in addition employ the Gronwall-type inequality~\eqref{eq:Gronwall} with $\varphi = \Delta \phi$; 
this yields 
\begin{equation*}
\begin{split}
&\normbig{\Ltwo}{\nabla \partial_{tt} \psi(t) \cdot \nabla \phi(t)}^2
\leq \normbig{\Lfour}{\nabla \partial_{tt} \psi(t)}^2 \, \normbig{\Lfour}{\nabla \phi(t)}^2 \\
&\quad \leq C_{\Lfour \leftarrow \Hone}^4 \, \normbig{\Hone}{\nabla \partial_{tt} \psi(t)}^2 \, \normbig{\Hone}{\nabla \phi(t)}^2
\leq C_{\text{PF}}^4 \, C_{\Lfour \leftarrow \Hone}^4 \, \normbig{\Ltwo}{\Delta \partial_{tt} \psi(t)}^2 \, \normbig{\Ltwo}{\Delta \phi(t)}^2 \\
&\quad \leq C_{\text{PF}}^4 \, C_{\Lfour \leftarrow \Hone}^4 \, \normbig{\Ltwo}{\Delta \partial_{tt} \psi(t)}^2 \, 
\bigg(3 \, \normbig{\Ltwo}{\Delta \psi_0}^2 + 3 \, T \int_{0}^{t} \normbig{\Ltwo}{\Delta \partial_{t} \phi(\tau)}^2 \; \dd\tau\bigg) \\
&\quad \leq 3 \, C_{\text{PF}}^4 \, C_{\Lfour \leftarrow \Hone}^4 \, \overline{\alpha} \, \normbig{\Ltwo}{\tfrac{1}{\sqrt{\alphaphi(t)}} \, \Delta \partial_{tt} \psi(t)}^2 \\
&\quad\qquad \times 
\bigg(\normbig{\Ltwo}{\Delta \psi_0}^2 + \overline{\alpha} \, T \int_{0}^{t} \normbig{\Ltwo}{\tfrac{1}{\sqrt{\alphaphi(t)}} \, \Delta \partial_{t} \phi(\tau)}^2 \; \dd\tau\bigg) \\
&\quad \leq 3 \, C_{\text{PF}}^4 \, C_{\Lfour \leftarrow \Hone}^4 \, \overline{\alpha} \, \MyEphipsi{20}{t} \, 
\Big(\normbig{\Ltwo}{\Delta \psi_0}^2 + \overline{\alpha} \, T^2 \, \sup_{t \in [0, T]} \MyEphiphi{13}{t}\Big) \\
&\quad \leq \tfrac{9 \, C_{\text{PF}}^4 C_{\Lfour \leftarrow \Hone}^4}{2 \, \bba_1} \, \MywEphipsi{2}{t} \, 
\Big(\normbig{\Ltwo}{\Delta \psi_0}^2 + \tfrac{3}{\bb_3} \, T^2 \, \BoundE{1}\Big)\,.
\end{split}
\end{equation*}
By the elementary inequality $(a_1 + a_2 + a_3)^2 \leq 3 \, (a_1^2 + a_2^2 + a_3^2)$, valid for positive real numbers $a_1, a_2, a_3 > 0$, the bound 
\begin{equation*}
\begin{split}
&\int_{0}^{t} \normbig{\Ltwo}{\rphi(\tau)}^2 \; \dd\tau \\
&\quad \leq 3 \int_{0}^{t} \Big(\big(\bb_5(\sigma)\big)^2 \, \normBig{\Ltwo}{\partial_{tt} \psi(\tau) \, \partial_{tt} \phi(\tau)}^2 
+ 4 \, \big(\bb_6(\sigma)\big)^2 \, \normBig{\Ltwo}{\nabla \partial_{t} \psi(\tau) \, \nabla \partial_{t} \phi(\tau)}^2 \\
&\quad\qquad + 4 \, \big(\bb_6(\sigma)\big)^2 \, \normbig{\Ltwo}{\nabla \partial_{tt} \psi(\tau) \cdot \nabla \phi(\tau)}^2\Big) \; \dd\tau \\
&\quad \leq 12 \, C_{\text{PF}}^4 \, C_{\Lfour \leftarrow \Hone}^4 \, \Big(\big(\bb_5(\sigma)\big)^2 + \tfrac{9 \, (\bb_6(\sigma))^2}{\bb_3^2}\Big) \, \BoundE{1} 
\int_{0}^{t} \MyEphipsi{1}{\tau} \; \dd\tau \\
&\quad\qquad + \tfrac{54 \, \, C_{\text{PF}}^4 C_{\Lfour \leftarrow \Hone}^4 (\bb_6(\sigma))^2}{\bba_1} \, 
\Big(\normbig{\Ltwo}{\Delta \psi_0}^2 + \tfrac{3}{\bb_3} \, T^2 \, \BoundE{1}\Big) \int_{0}^{t} \MywEphipsi{2}{\tau} \; \dd\tau
\end{split}
\end{equation*}
follows. 
With the help of the abbreviations 
\begin{subequations}
\label{eq:EstimateNonlinearity}
\begin{equation}
\label{eq:C1C2C3}
\begin{gathered}
C_1 = 12 \, C_{\text{PF}}^4 \, C_{\Lfour \leftarrow \Hone}^4 \, \Big(\big(\bb_5(\sigma)\big)^2 + \tfrac{9 \, (\bb_6(\sigma))^2}{\bb_3^2}\Big)\,, \\
C_2 = \tfrac{54 \, C_{\text{PF}}^4 C_{\Lfour \leftarrow \Hone}^4 (\bb_6(\sigma))^2}{\bba_1}\,, \quad C_3 = \tfrac{3}{\bb_3}\,, 
\end{gathered}
\end{equation}
we arrive at the auxiliary estimate 
\begin{equation}
\begin{split}
&\int_{0}^{t} \normbig{\Ltwo}{\rphi(\tau)}^2 \; \dd\tau \\
&\quad \leq C_1 \, \BoundE{1} \int_{0}^{t} \MyEphipsi{1}{\tau} \; \dd\tau \\
&\quad\qquad + C_2 \, \Big(\normbig{\Ltwo}{\Delta \psi_0}^2 + C_3 \, T^2 \, \BoundE{1}\Big) \int_{0}^{t} \MywEphipsi{2}{\tau} \; \dd\tau \\
&\quad \leq C_1 \, \BoundE{1} \int_{0}^{t} \MyEphipsi{1}{\tau} \; \dd\tau \\
&\quad\qquad + 4 \, C_2 \, \Big(\normbig{\Ltwo}{\Delta \psi_0}^2 + C_3 \, T^2 \, \BoundE{1}\Big) \int_{0}^{t} \MyEphipsi{2}{\tau} \; \dd\tau \\
\end{split}
\end{equation}
\end{subequations}

\paragraph{First energy estimate.}
Our starting point is~\eqref{eq:GeneralModelReformulationTest1}, which we restate for convenience 
\begin{equation*}
\begin{split}
&\MywEphipsi{0}{t} + \bba_1 \int_{0}^{t} \MyEphipsi{11}{\tau} \; \dd\tau \\
&\quad = \MywEphipsiAtzero{0}
+ \bba_4(\sigma_0) \, \scpbig{\Ltwo}{\Delta \psi_1}{\Delta \psi_0} 
- \bba_4(\sigma_0) \, \scpbig{\Ltwo}{\Delta \partial_{t} \psi(t)}{\Delta \psi(t)} \\
&\quad\qquad + \bba_4(\sigma_0) \int_{0}^{t} \normbig{\Ltwo}{\Delta \partial_{t} \psi(\tau)}^2 \; \dd\tau \\
&\quad\qquad + \tfrac{1}{2} \int_{0}^{t} \scpbig{\Ltwo}{\partial_t \alphaphi(\tau) \, \partial_{tt} \psi(\tau)}{\partial_{tt} \psi(\tau)} \; \dd\tau 
- \int_{0}^{t} \scpbig{\Ltwo}{\rphi(\tau)}{\partial_{tt} \psi(\tau)} \; \dd\tau\,,
\end{split}
\end{equation*}
see also~\eqref{eq:E0E1E2}.
In order to suitably estimate and absorb the terms arising on the right-hand side, we proceed as follows.
\begin{enumerate}[(i)]
\item 
By means of Cauchy's inequality and Young's inequality, we have 
\begin{equation*}
\begin{split}
&\bba_4(\sigma_0) \, \absBig{\scpbig{\Ltwo}{\Delta \psi_1}{\Delta \psi_0}} 
\leq \bba_4(\sigma_0) \, \normbig{\Ltwo}{\Delta \psi_1} \, \normbig{\Ltwo}{\Delta \psi_0} \\
&\quad \leq \tfrac{\bba_4(\sigma_0)}{2} \, \normbig{\Ltwo}{\Delta \psi_1}^2 
+ \tfrac{\bba_4(\sigma_0)}{2} \, \normbig{\Ltwo}{\Delta \psi_0}^2\,. 
\end{split}
\end{equation*}
\item 
In a similar manner, incorporating an additional weight $\gamma_1 > 0$, we obtain 
\begin{equation*}
\begin{split}
&\bba_4(\sigma_0) \, \absBig{\scpbig{\Ltwo}{\Delta \partial_{t} \psi(t)}{\Delta \psi(t)}} 
\leq \bba_4(\sigma_0) \, \normbig{\Ltwo}{\Delta \partial_{t} \psi(t)} \, \normbig{\Ltwo}{\Delta \psi(t)} \\
&\quad \leq \tfrac{\gamma_1^2 \bba_4(\sigma_0)}{2} \, \normbig{\Ltwo}{\Delta \partial_{t} \psi(t)}^2 
+ \tfrac{\bba_4(\sigma_0)}{2 \gamma_1^2} \, \normbig{\Ltwo}{\Delta \psi(t)}^2\,;
\end{split}
\end{equation*}
with regard to the relation $\bba_2(\sigma_0) = \bba_0(\sigma_0) \, \bba_4(\sigma_0)$, we set $\gamma_1^2 = \tfrac{\bba_0(\sigma_0)}{2}$ such that 
\begin{equation*}
\begin{split}
&\bba_4(\sigma_0) \, \absBig{\scpbig{\Ltwo}{\Delta \partial_{t} \psi(t)}{\Delta \psi(t)}} \\
&\quad \leq \tfrac{\bba_2(\sigma_0)}{4} \, \normbig{\Ltwo}{\Delta \partial_{t} \psi(t)}^2 
+ \tfrac{\bba_4(\sigma_0)}{\bba_0(\sigma_0)} \, \normbig{\Ltwo}{\Delta \psi(t)}^2\,. 
\end{split}
\end{equation*}
This permits to absorb the first term involving~$\norm{\Ltwo}{\Delta \partial_{t} \psi(t)}^2$ and explains the definition of the energy functional 
\begin{equation*}
\begin{split}
&\MyEphipsi{0}{t} 
= \MywEphipsi{0}{t} - \tfrac{\bba_2(\sigma_0)}{4} \, \normbig{\Ltwo}{\Delta \partial_{t} \psi(t)}^2 \\
&\quad= \tfrac{1}{2} \, \normbig{\Ltwo}{\sqrt{\alphaphi(t)} \, \partial_{tt} \psi(t)}^2
+ \tfrac{\bba_2(\sigma_0)}{4} \, \normbig{\Ltwo}{\Delta \partial_{t} \psi(t)}^2
+ \tfrac{\bb_3}{2} \, \normbig{\Ltwo}{\nabla \partial_{t} \psi(t)}^2\,; 
\end{split}
\end{equation*}
for the second term, we apply the Gronwall-type inequality~\eqref{eq:Gronwall} with $\varphi = \Delta \psi$, which yields 
\begin{equation*}
\normbig{\Ltwo}{\Delta \psi (t)}^2 \leq 3 \, \normbig{\Ltwo}{\Delta \psi_0}^2 + 3 \, T \int_{0}^{t} \normbig{\Ltwo}{\Delta \partial_{t} \psi(\tau)}^2 \; \dd\tau\,.
\end{equation*}
\item 
Again by Cauchy's inequality, we have 
\begin{equation*}
\scpBig{\Ltwo}{\big(\partial_{tt} \psi(\tau)\big)^2}{\partial_{tt} \phi(\tau)} 
\leq \normbig{\Lfour}{\partial_{tt} \psi(\tau)}^2 \, \normbig{\Ltwo}{\partial_{tt} \phi(\tau)}\,;
\end{equation*}
relation~\eqref{eq:PoincareEmbeddingEllipticity} and the uniform bound $\tfrac{1}{\underline{\alpha}} = 2$, see~\eqref{eq:BoundednessAlpha}, imply
\begin{equation*}
\begin{split}
&\tfrac{1}{2} \int_{0}^{t} \scpbig{\Ltwo}{\partial_{t} \alphaphi(\tau) \, \partial_{tt} \psi(\tau)}{\partial_{tt} \psi(\tau)} \; \dd\tau \\
&\quad \leq \tfrac{\bb_5(\sigma)}{2} \int_{0}^{t} \normbig{\Lfour}{\partial_{tt} \psi(\tau)}^2 \, \normbig{\Ltwo}{\partial_{tt} \phi(\tau)} \; \dd\tau \\
&\quad \leq \tfrac{C_{\text{PF}}^2 C_{\Lfour \leftarrow \Hone}^2 \bb_5(\sigma)}{2} \, \sqrt{\tfrac{1}{\underline{\alpha}}} 
\int_{0}^{t} \normbig{\Ltwo}{\nabla \partial_{tt} \psi(\tau)}^2 \, \normBig{\Ltwo}{\sqrt{\alphaphi(\tau)} \, \partial_{tt} \phi(\tau)} \; \dd\tau \\
&\quad \leq \tfrac{C_{\text{PF}}^2 C_{\Lfour \leftarrow \Hone}^2 \bb_5(\sigma)}{2} \, \sqrt{\tfrac{1}{\underline{\alpha}}} 
\int_{0}^{t} \sqrt{\MyEphiphi{01}{\tau}} \, \MyEphipsi{11}{\tau} \; \dd\tau \\
&\quad \leq C_{\text{PF}}^2 \, C_{\Lfour \leftarrow \Hone}^2 \, \bb_5(\sigma) \, \sqrt{\BoundE{0}} \int_{0}^{t} \MyEphipsi{11}{\tau} \; \dd\tau\,.
\end{split}
\end{equation*}
Provided that the smallness requirement 
\begin{equation*}
\tfrac{C_{\text{PF}}^2 C_{\Lfour \leftarrow \Hone}^2 \bb_5(\sigma)}{\bba_1} \, \sqrt{\BoundE{0}} \leq \tfrac{1}{2}
\end{equation*}
is satisfied, the resulting term 
\begin{equation*}
\tfrac{1}{2} \int_{0}^{t} \scpbig{\Ltwo}{\partial_{t} \alphaphi(\tau) \, \partial_{tt} \psi(\tau)}{\partial_{tt} \psi(\tau)} \; \dd\tau
\leq \tfrac{\bba_1}{2} \int_{0}^{t} \MyEphipsi{11}{\tau} \; \dd\tau
\end{equation*}
can be absorbed by the corresponding term arising on the left-hand side.
\item 
Cauchy's inequality and Young's inequality with weight $\gamma_2 > 0$ and~\eqref{eq:PoincareEmbeddingEllipticity} yields 
\begin{equation*}
\begin{split}
&\absBig{\scpbig{\Ltwo}{\rphi(\tau)}{\partial_{tt} \psi(\tau)}}
\leq \normbig{\Ltwo}{\rphi(\tau)} \normbig{\Ltwo}{\partial_{tt} \psi(\tau)} \\
&\quad \leq \tfrac{1}{2 \gamma_2^2} \, \normbig{\Ltwo}{\rphi(\tau)}^2 + \tfrac{\gamma_2^2}{2} \, \normbig{\Ltwo}{\partial_{tt} \psi(\tau)}^2 
\leq \tfrac{1}{2 \gamma_2^2} \, \normbig{\Ltwo}{\rphi(\tau)}^2 + \tfrac{\gamma_2^2}{2} \, \normbig{\Hone}{\partial_{tt} \psi(\tau)}^2 \\
&\quad \leq \tfrac{1}{2 \gamma_2^2} \, \normbig{\Ltwo}{\rphi(\tau)}^2 + \tfrac{C_{\text{PF}}^2 \gamma_2^2}{2} \, \MyEphipsi{11}{\tau}\,;
\end{split}
\end{equation*}
with the special choice $\gamma_2^2 = \frac{\bba_1}{2 \, C_{\text{PF}}^2}$ such that $\frac{C_{\text{PF}}^2 \gamma_2^2}{2} = \frac{\bba_1}{4}$ the second term arising on the right-hand side of 
\begin{equation*}
\begin{split}
&\int_{0}^{t} \absBig{\scpbig{\Ltwo}{\rphi(\tau)}{\partial_{tt} \psi(\tau)}} \; \dd\tau \\
&\quad \leq \tfrac{C_{\text{PF}}^2}{\bba_1} \int_{0}^{t} \normbig{\Ltwo}{\rphi(\tau)}^2 \; \dd\tau + \tfrac{\bba_1}{4} \int_{0}^{t} \MyEphipsi{11}{\tau} \; \dd\tau
\end{split}
\end{equation*}
can be absorbed. 
\end{enumerate}
The above considerations imply the estimate 
\begin{equation*}
\begin{split}
&\MyEphipsi{0}{t} + \tfrac{\bba_1}{4} \int_{0}^{t} \MyEphipsi{11}{\tau} \; \dd\tau \\
&\quad \leq \MywEphipsiAtzero{0} 
+ \tfrac{\bba_4(\sigma_0)}{2} \, \normbig{\Ltwo}{\Delta \psi_1}^2 
+ \bba_4(\sigma_0) \, \Big(\tfrac{1}{2} + \tfrac{3}{\bba_0(\sigma_0)}\Big) \, \normbig{\Ltwo}{\Delta \psi_0}^2 \\
&\quad\qquad
+ \bba_4(\sigma_0) \, \Big(1 + \tfrac{3 \, T}{\bba_0(\sigma_0)}\Big) \int_{0}^{t} \normbig{\Ltwo}{\Delta \partial_{t} \psi(\tau)}^2 \; \dd\tau 
+ \tfrac{C_{\text{PF}}^2}{\bba_1} \int_{0}^{t} \normbig{\Ltwo}{\rphi(\tau)}^2 \; \dd\tau\,; 
\end{split}
\end{equation*}
by means of relation~\eqref{eq:BoundednessAlpha} providing the uniform bound~$\overline{\alpha} = \tfrac{3}{2}$ and estimate~\eqref{eq:EstimateNonlinearity}, we further obtain 
\begin{equation*}
\begin{split}
&\MyEphipsi{0}{t} + \tfrac{\bba_1}{4} \int_{0}^{t} \MyEphipsi{11}{\tau} \; \dd\tau \\
&\quad \leq \MywEphipsiAtzero{0} 
+ \tfrac{\bba_4(\sigma_0)}{\bb_3} \, \overline{\alpha} \, \MyEphipsiAtzero{1} 
+ \bba_4(\sigma_0) \, \Big(\tfrac{1}{2} + \tfrac{3}{\bba_0(\sigma_0)}\Big) \, \normbig{\Ltwo}{\Delta \psi_0}^2 \\
&\quad\qquad + \bigg(\tfrac{2 \, \bba_4(\sigma_0)}{\bb_3} \, \Big(1 + \tfrac{3 \, T}{\bba_0(\sigma_0)}\Big) \, \overline{\alpha} 
+ \tfrac{C_{\text{PF}}^2 C_1}{\bba_1} \, \BoundE{1}\bigg) \int_{0}^{t} \MyEphipsi{1}{\tau} \; \dd\tau \\
&\quad\qquad + \tfrac{4 \, C_{\text{PF}}^2 C_2}{\bba_1} \, \Big(\normbig{\Ltwo}{\Delta \psi_0}^2 + C_3 \, T^2 \, \BoundE{1}\Big) 
\int_{0}^{t} \MyEphipsi{2}{\tau} \; \dd\tau \\
&\quad \leq \MywEphipsiAtzero{0} 
+ \tfrac{3 \, \bba_4(\sigma_0)}{2 \, \bb_3} \, \MyEphipsiAtzero{1} 
+ \bba_4(\sigma_0) \, \Big(\tfrac{1}{2} + \tfrac{3}{\bba_0(\sigma_0)}\Big) \, \normbig{\Ltwo}{\Delta \psi_0}^2 \\
&\quad\qquad + \bigg(\tfrac{3 \, \bba_4(\sigma_0)}{\bb_3} \, \Big(1 + \tfrac{3 \, T}{\bba_0(\sigma_0)}\Big) 
+ \tfrac{C_{\text{PF}}^2 C_1}{\bba_1} \, \BoundE{1}\bigg) \int_{0}^{t} \MyEphipsi{1}{\tau} \; \dd\tau \\
&\quad\qquad + \tfrac{4 C_{\text{PF}}^2 C_2}{\bba_1} \, \Big(\normbig{\Ltwo}{\Delta \psi_0}^2 + C_3 \, T^2 \, \BoundE{1}\Big) 
\int_{0}^{t} \MyEphipsi{2}{\tau} \; \dd\tau\,.
\end{split}
\end{equation*}
Altogether, this shows that the lower-order energy functional is bounded in terms of the higher-order energy functional. 
More precisely, applying~\eqref{eq:BoundednessAlpha} as well as~\eqref{eq:PoincareEmbeddingEllipticity} to estimate~$\MywEphipsiAtzero{0}$ by~$\MyEphipsiAtzero{1}$ and recalling the definitions of $C_1, C_2, C_3$, see~\eqref{eq:C1C2C3}, we arrive at a relation of the form 
\begin{equation*}
\begin{split}
&\MyEphipsi{0}{t} \\
&\quad \leq \Phi_0\Big(C_{\text{PF}}, C_{\Lfour \leftarrow \Hone}, \tfrac{1}{\bba_0(\sigma_0)}, \tfrac{1}{\bba_1}, \bba_2(\sigma_0), \tfrac{1}{\bb_3}, \bba_4(\sigma_0), \bb_5(\sigma), \bb_6(\sigma), T\Big) \\ 
&\quad\qquad \times \bigg(\MyEphipsiAtzero{1} + \normbig{\Ltwo}{\Delta \psi_0}^2 
+ \big(1 + \BoundE{1}\big) \int_{0}^{t} \MyEphipsi{1}{\tau} \; \dd\tau \\
&\quad\qquad\qquad + \Big(\normbig{\Ltwo}{\Delta \psi_0}^2 + \BoundE{1}\Big) \int_{0}^{t} \MyEphipsi{2}{\tau} \; \dd\tau\bigg)\,;
\end{split}
\end{equation*}
we note that $\tfrac{1}{\bba_0(\sigma_0)}$, $\tfrac{1}{\bba_1}$, $\bba_2(\sigma_0)$, and $\bba_4(\sigma_0)$ remain bounded for $a \to 0_{+}$. 

\paragraph{Second energy estimate.}
In order to deduce a suitable a priori estimate for the higher-order energy functional, our starting point is 
\begin{equation*}
\begin{split}
&\MywEphipsi{1}{t} + \int_{0}^{t} \MywEphipsi{2}{\tau} \; \dd\tau \\
&\quad = \MywEphipsiAtzero{1} 
+ \bba_4(\sigma_0) \, \scpBig{\Ltwo}{\tfrac{1}{\alphaphi(0)}}{\nabla \Delta \psi_1 \cdot \nabla \Delta \psi_0} \\
&\qquad - \, \bba_4(\sigma_0) \, \scpBig{\Ltwo}{\tfrac{1}{\alphaphi(t)}}{\nabla \Delta \partial_{t} \psi(t) \cdot \nabla \Delta \psi(t)} 
+ \bba_4(\sigma_0) \int_{0}^{t} \MyEphipsi{12}{\tau} \; \dd\tau \\
&\qquad + \int_{0}^{t} \scpBig{\Ltwo}{\tfrac{1}{\alphaphi(\tau)} \, \rphi(\tau)}{\Delta \partial_{tt} \psi(\tau)} \; \dd\tau 
+ R(t)\,,
\end{split}
\end{equation*}
where we employ the convenient abbreviation 
\begin{equation*}
\begin{split}
R(t) &= \bba_2(\sigma_0) \, \bb_5(\sigma) 
\int_{0}^{t} \scpBig{\Ltwo}{\tfrac{1}{(\alphaphi(\tau))^2}}{\Delta \partial_{tt} \psi(\tau) \, \nabla \Delta \partial_{t} \psi(\tau) \cdot \nabla \partial_{t} \phi(\tau)} \; \dd\tau \\
&\qquad - \, \tfrac{\bba_2(\sigma_0) \bb_5(\sigma)}{2} \int_{0}^{t} \scpBig{\Ltwo}{\tfrac{1}{(\alphaphi(\tau))^2}}{\partial_{tt} \phi(\tau) \, \absbig{\nabla \Delta \partial_{t} \psi(\tau)}^2} \; \dd\tau \\
&\qquad
- \tfrac{\bb_3 \bb_5(\sigma)}{2} \int_{0}^{t} \scpBig{\Ltwo}{\tfrac{1}{(\alphaphi(\tau))^2}}{\partial_{tt} \phi(\tau) \, \big(\Delta \partial_{t} \psi(\tau)\big)^2} \; \dd\tau \\
&\qquad + \bba_4(\sigma_0) \, \bb_5(\sigma) \int_{0}^{t} \scpBig{\Ltwo}{\tfrac{1}{(\alphaphi(\tau))^2}}{\Delta \partial_{tt} \psi(\tau) \, \nabla \partial_{t} \phi(\tau) \cdot \nabla \Delta \psi(\tau)} \; \dd\tau \\
&\qquad - \, \bba_4(\sigma_0) \, \bb_5(\sigma) \int_{0}^{t} \scpBig{\Ltwo}{\tfrac{1}{(\alphaphi(\tau))^2}}{\partial_{tt} \phi(\tau) \, \nabla \Delta \partial_{t} \psi(\tau) \cdot \nabla \Delta \psi(\tau)} \; \dd\tau\,,
\end{split}
\end{equation*}
see also~\eqref{eq:GeneralModelReformulationTest2}; 
similar arguments to before permit to estimate and absorb the arising terms. 
\begin{enumerate}[(i)]
\item 
The application of Cauchy's inequality, Young's inequality with 
\begin{equation*}
\gamma_1^2 = \tfrac{\bba_2(\sigma_0)}{2 \, \bba_4(\sigma_0)} = \tfrac{\bba_0(\sigma_0)}{2}\,,
\end{equation*}  
and the uniform bound $\tfrac{1}{\underline{\alpha}} = 2$, see~\eqref{eq:BoundednessAlpha}, yields 
\begin{equation*}
\begin{split}
&\bba_4(\sigma_0) \, \absBig{\scpbig{\Ltwo}{\tfrac{1}{\alphaphi(0)}}{\nabla \Delta \psi_1 \cdot \nabla \Delta \psi_0}} \\
&\quad \leq \bba_4(\sigma_0) \, \normBig{\Ltwo}{\tfrac{1}{\sqrt{\alphaphi(0)}} \, \nabla \Delta \psi_1} \,  
\normBig{\Ltwo}{\tfrac{1}{\sqrt{\alphaphi(0)}} \, \nabla \Delta \psi_0} \\
&\quad \leq \tfrac{\gamma_1^2 \bba_4(\sigma_0)}{2} \, \normBig{\Ltwo}{\tfrac{1}{\sqrt{\alphaphi(0)}} \, \nabla \Delta \psi_1}^2
+ \tfrac{\bba_4(\sigma_0)}{2 \, \gamma_1^2} \, \normBig{\Ltwo}{\tfrac{1}{\sqrt{\alphaphi(0)}} \, \nabla \Delta \psi_0}^2 \\
&\quad \leq \tfrac{\bba_2(\sigma_0)}{4} \, \MyEphipsiAtzero{12} 
+ \tfrac{\bba_4(\sigma_0)}{\bba_0(\sigma_0)} \, \tfrac{1}{\underline{\alpha}} \normbig{\Ltwo}{\nabla \Delta \psi_0}^2 \\
&\quad \leq \MyEphipsiAtzero{1} 
+ \tfrac{2 \, \bba_4(\sigma_0)}{\bba_0(\sigma_0)} \, \normbig{\Ltwo}{\nabla \Delta \psi_0}^2\,.
\end{split}
\end{equation*}
\item 
Using in addition the Gronwall-type inequality~\eqref{eq:Gronwall} with $\varphi = \nabla \Delta \psi$ and the uniform bound $\overline{\alpha} = \tfrac{3}{2}$, see again~\eqref{eq:BoundednessAlpha}, we obtain 
\begin{equation*}
\begin{split}
&\bba_4(\sigma_0) \, \absBig{\scpbig{\Ltwo}{\tfrac{1}{\alphaphi(t)}}{\nabla \Delta \partial_{t} \psi(t) \cdot \nabla \Delta \psi(t)}} \\
&\quad \leq \bba_4(\sigma_0) \, \normBig{\Ltwo}{\tfrac{1}{\sqrt{\alphaphi(t)}} \, \nabla \Delta \partial_{t} \psi(t)} \, 
\normBig{\Ltwo}{\tfrac{1}{\sqrt{\alphaphi(t)}} \, \nabla \Delta \psi(t)} \\
&\quad \leq \tfrac{\gamma_1^2 \bba_4(\sigma_0)}{2} \, \normBig{\Ltwo}{\tfrac{1}{\sqrt{\alphaphi(t)}} \, \nabla \Delta \partial_{t} \psi(t)}^2 
+ \tfrac{\bba_4(\sigma_0)}{2 \, \gamma_1^2} \, \normBig{\Ltwo}{\tfrac{1}{\sqrt{\alphaphi(t)}} \, \nabla \Delta \psi(t)}^2 \\
&\quad \leq \tfrac{\bba_2(\sigma_0)}{4} \, \MyEphipsi{12}{t} 
+ \tfrac{\bba_2(\sigma_0)}{(\bba_0(\sigma_0))^2} \, \tfrac{1}{\underline{\alpha}} \, \normbig{\Ltwo}{\nabla \Delta \psi(t)}^2 \\
&\quad \leq \tfrac{\bba_2(\sigma_0)}{4} \, \MyEphipsi{12}{t} \\
&\quad\qquad + \tfrac{6 \, \bba_2(\sigma_0)}{(\bba_0(\sigma_0))^2} \, \Bigg(\normbig{\Ltwo}{\nabla \Delta \psi_0}^2 
+ T \int_{0}^{t} \normbig{\Ltwo}{\nabla \Delta \partial_{t} \psi(\tau)}^2 \; \dd \tau\Bigg) \\
&\quad \leq \tfrac{\bba_2(\sigma_0)}{4} \, \MyEphipsi{12}{t} \\
&\quad\qquad + \tfrac{6 \, \bba_2(\sigma_0)}{(\bba_0(\sigma_0))^2} \, \Bigg(\normbig{\Ltwo}{\nabla \Delta \psi_0}^2 
+ \overline{\alpha} \, T \int_{0}^{t} \MyEphipsi{12}{\tau} \; \dd \tau\Bigg) \\
&\quad \leq \tfrac{\bba_2(\sigma_0)}{4} \, \MyEphipsi{12}{t} \\
&\quad\qquad + \tfrac{6 \, \bba_2(\sigma_0)}{(\bba_0(\sigma_0))^2} \, \normbig{\Ltwo}{\nabla \Delta \psi_0}^2 
+ \tfrac{9 \, \bba_2(\sigma_0)}{(\bba_0(\sigma_0))^2} \, T \int_{0}^{t} \MyEphipsi{12}{\tau} \; \dd \tau \\
&\quad \leq \tfrac{\bba_2(\sigma_0)}{4} \, \MyEphipsi{12}{t} \\
&\quad\qquad + \tfrac{6 \, \bba_4(\sigma_0)}{\bba_0(\sigma_0)} \, \normbig{\Ltwo}{\nabla \Delta \psi_0}^2 
+ \tfrac{36}{(\bba_0(\sigma_0))^2} \, T \int_{0}^{t} \MyEphipsi{1}{\tau} \; \dd \tau\,;
\end{split}
\end{equation*}
this shows that the first term on the right-hand side can be absorbed and explains the definition of the energy functional 
\begin{equation*}
\MyEphipsi{1}{t} = \MywEphipsi{1}{t} - \tfrac{\bba_2(\sigma_0)}{4} \, \MyEphipsi{12}{t}\,.
\end{equation*}
\item 
Recalling once more the abbreviation $\bba_0(\sigma_0) = \frac{\bba_2(\sigma_0)}{\bba_4(\sigma_0)}$, the bound 
\begin{equation*}
\bba_4(\sigma_0) \int_{0}^{t} \MyEphipsi{12}{\tau} \; \dd\tau
\leq \tfrac{4}{\bba_0(\sigma_0)} \int_{0}^{t} \MyEphipsi{1}{\tau} \; \dd\tau
\end{equation*}
is obvious.
\item 
By Cauchy's inequality, Young's inequality with weight $\gamma_2^2 = \bba_1$, and the upper bound $\tfrac{1}{\underline{\alpha}} = 2$, we have 
\begin{equation*}
\begin{split}
&\int_{0}^{t} \scpbig{\Ltwo}{\tfrac{1}{\alphaphi(\tau)} \, \rphi(\tau)}{\Delta \partial_{tt} \psi(\tau)} \; \dd\tau \\
&\quad \leq \int_{0}^{t} \normBig{\Ltwo}{\tfrac{1}{\sqrt{\alphaphi(\tau)}} \, \rphi(\tau)} \, 
\normBig{\Ltwo}{\tfrac{1}{\sqrt{\alphaphi(\tau)}} \, \Delta \partial_{tt} \psi(\tau)} \; \dd\tau \\
&\quad \leq \tfrac{1}{2 \, \gamma_2^2} \, \int_{0}^{t} \normBig{\Ltwo}{\tfrac{1}{\sqrt{\alphaphi(\tau)}} \, \rphi(\tau)}^2 \; \dd\tau 
+ \tfrac{\gamma_2^2}{2} \int_{0}^{t} \normBig{\Ltwo}{\tfrac{1}{\sqrt{\alphaphi(\tau)}} \, \Delta \partial_{tt} \psi(\tau)}^2 \; \dd\tau \\
&\quad \leq \tfrac{1}{\bba_1} \int_{0}^{t} \normbig{\Ltwo}{\rphi(\tau)}^2 \; \dd\tau 
+ \tfrac{1}{2} \int_{0}^{t} \MywEphipsi{2}{\tau} \; \dd\tau\,;
\end{split}
\end{equation*}
together with estimate~\eqref{eq:EstimateNonlinearity} for the nonlinearity, this implies 
\begin{equation*}
\begin{split}
&\int_{0}^{t} \scpbig{\Ltwo}{\tfrac{1}{\alphaphi(\tau)} \, \rphi(\tau)}{\Delta \partial_{tt} \psi(\tau)} \; \dd\tau \\
&\quad \leq \tfrac{C_1}{\bba_1} \, \BoundE{1} \int_{0}^{t} \MyEphipsi{1}{\tau} \; \dd\tau \\
&\quad\qquad + \bigg(\tfrac{1}{2} + \tfrac{C_2}{\bba_1} \, \Big(\normbig{\Ltwo}{\Delta \psi_0}^2 + C_3 \, T^2 \, \BoundE{1}\Big)\Bigg) 
\int_{0}^{t} \MywEphipsi{2}{\tau} \; \dd\tau\,.
\end{split}
\end{equation*}
Under the additional smallness requirement 
\begin{equation*}
\tfrac{C_2}{\bba_1} \, \Big(\normbig{\Ltwo}{\Delta \psi_0}^2 + C_3 \, T^2 \, \BoundE{1}\Big) \leq \tfrac{1}{4}\,, 
\end{equation*}
we obtain the relation 
\begin{equation*}
\begin{split}
&\int_{0}^{t} \scpbig{\Ltwo}{\tfrac{1}{\alphaphi(\tau)} \, \rphi(\tau)}{\Delta \partial_{tt} \psi(\tau)} \; \dd\tau \\
&\quad \leq \tfrac{C_1}{\bba_1} \, \BoundE{1} \int_{0}^{t} \MyEphipsi{1}{\tau} \; \dd\tau 
+ \tfrac{3}{4} \int_{0}^{t} \MywEphipsi{2}{\tau} \; \dd\tau\,;
\end{split}
\end{equation*}
thus, the second term involving~$\wE_2$ can be absorbed and yields the integral over~$E_2$. 
\end{enumerate}
As an intermediate result, we have a bound of the form 
\begin{equation}
\label{eq:Intermediate1}
\begin{split}
&\MyEphipsi{1}{t} + \int_{0}^{t} \MyEphipsi{2}{\tau} \; \dd\tau \\
&\quad \leq \Phi_1\Big(C_{\text{PF}}, C_{\Lfour \leftarrow \Hone}, \tfrac{1}{\bba_0(\sigma_0)}, \tfrac{1}{\bba_1}, \bba_2(\sigma_0), \\ &\quad\qquad\qquad \tfrac{1}{\bb_3}, \bba_4(\sigma_0), \bb_5(\sigma), \bb_6(\sigma), T, \BoundE{1}\Big) \\ 
&\quad\qquad \times \bigg(\MyEphipsiAtzero{1} + \normbig{\Ltwo}{\nabla \Delta \psi_0}^2 + \int_{0}^{t} \MyEphipsi{1}{\tau} \; \dd\tau\bigg)
+ \absbig{R(t)}\,.
\end{split}
\end{equation}
The remaining terms are estimated with the help of Cauchy's inequality and~\eqref{eq:PoincareEmbeddingEllipticity}, that is, we use that a product of functions satisfies the relation 
\begin{equation*}
\begin{split}
&\absBig{\scpbig{\Ltwo}{\varphi_1(\tau) \, \varphi_2(\tau)}{\varphi_3(\tau)}}
\leq \normbig{\Ltwo}{\varphi_1(\tau) \, \varphi_2(\tau)} \, \normbig{\Ltwo}{\varphi_3(\tau)} \\
&\quad \leq \normbig{\Linfty}{\varphi_1(\tau)} \, \normbig{\Ltwo}{\varphi_2(\tau)} \, \normbig{\Ltwo}{\varphi_3(\tau)} 
\leq C_{\Linfty \leftarrow \Htwo} \, \normbig{\Htwo}{\varphi_1(\tau)} \, \normbig{\Ltwo}{\varphi_2(\tau)} \, \normbig{\Ltwo}{\varphi_3(\tau)} \\
&\quad \leq C_{\Delta} \, C_{\Linfty \leftarrow \Htwo} \, \normbig{\Ltwo}{\Delta \varphi_1(\tau)} \, \normbig{\Ltwo}{\varphi_2(\tau)} \, \normbig{\Ltwo}{\varphi_3(\tau)}\,.
\end{split}
\end{equation*}
As a consequence, by~\eqref{eq:E0E1E2}, inserting again $\tfrac{1}{\underline{\alpha}} = 2$, we obtain
\begin{equation*}
\begin{split}
&\absbig{R(t)} \\
&\quad\leq 2 \, \bba_2(\sigma_0) \, \bb_5(\sigma) 
\int_{0}^{t} \sqrt{\MyEphipsi{20}{\tau}} \, \sqrt{\MyEphipsi{12}{\tau}} \, \normbig{\Linfty}{\nabla \partial_{t} \phi(\tau)} \; \dd\tau \\
&\qquad + \bba_2(\sigma_0) \, \bb_5(\sigma) \int_{0}^{t} \normbig{\Linfty}{\partial_{tt} \phi(\tau)} \, \MyEphipsi{12}{\tau} \; \dd\tau \\
&\qquad + \bb_3 \, \bb_5(\sigma) \int_{0}^{t} \normbig{\Linfty}{\partial_{tt} \phi(\tau)} \, \MyEphipsi{13}{\tau} \; \dd\tau \\
&\qquad + 2 \sqrt{2} \, \bba_4(\sigma_0) \, \bb_5(\sigma) 
\int_{0}^{t} \sqrt{\MyEphipsi{20}{\tau}} \, \normbig{\Linfty}{\nabla \partial_{t} \phi(\tau)} \, \normbig{\Ltwo}{\nabla \Delta \psi(\tau)} \; \dd\tau \\
&\qquad + 2 \sqrt{2} \, \bba_4(\sigma_0) \, \bb_5(\sigma) 
\int_{0}^{t} \normbig{\Linfty}{\partial_{tt} \phi(\tau)} \, \sqrt{\MyEphipsi{12}{\tau}} \, \normbig{\Ltwo}{\nabla \Delta \psi(\tau)} \; \dd\tau\,. 
\end{split}
\end{equation*}
Recalling the upper bound $\overline{\alpha} = \tfrac{3}{2}$, we employ the estimates 
\begin{equation*}
\begin{split}
&\normbig{\Linfty}{\nabla \partial_{t} \phi(\tau)} 
\leq C_{\Delta} \, C_{\Linfty \leftarrow \Htwo} \, \normbig{\Ltwo}{\nabla \Delta \partial_{t} \phi(\tau)} \\
&\quad \leq \sqrt{\tfrac{3}{2}} \, C_{\Delta} \, C_{\Linfty \leftarrow \Htwo} \, \sqrt{\MyEphiphi{12}{\tau}}\,, \\
&\normbig{\Linfty}{\partial_{tt} \phi(\tau)} 
\leq C_{\Delta} \, C_{\Linfty \leftarrow \Htwo} \, \normbig{\Ltwo}{\Delta \partial_{tt} \phi(\tau)}\\
&\quad \leq \sqrt{\tfrac{3}{2}} \, C_{\Delta} \, C_{\Linfty \leftarrow \Htwo} \, \sqrt{\MyEphiphi{20}{\tau}}\,;
\end{split}
\end{equation*} 
moreover, the Gronwall-type inequality~\eqref{eq:Gronwall} applied with~$\varphi = \nabla \Delta \psi$ and the elementary relation $\sqrt{x^2 + y^2} \leq x + y$, 
valid for positive real numbers $x, y > 0$, implies 
\begin{equation*}
\begin{split}
&\normbig{\Ltwo}{\nabla \Delta \psi(\tau)}^2
\leq 3 \, \normbig{\Ltwo}{\nabla \Delta \psi_0}^2 
+ 3 \, T \int_{0}^{\tau} \normbig{\Ltwo}{\nabla \Delta \partial_{t} \psi(\widetilde{\tau})}^2 \; \dd \widetilde{\tau} \\ 
&\quad \leq 3 \, \normbig{\Ltwo}{\nabla \Delta \psi_0}^2 
+ 3 \, T \, \overline{\alpha} \int_{0}^{\tau} \MyEphipsi{12}{\widetilde{\tau}} \; \dd \widetilde{\tau}\,, \\
&\normbig{\Ltwo}{\nabla \Delta \psi(\tau)}
\leq \sqrt{3} \, \normbig{\Ltwo}{\nabla \Delta \psi_0} 
+ \tfrac{3}{\sqrt{2}} \, \sqrt{T} \, \sqrt{\int_{0}^{\tau} \MyEphipsi{12}{\widetilde{\tau}} \; \dd \widetilde{\tau}}\,.
\end{split}
\end{equation*}
Introducing the auxiliary abbreviation 
\begin{equation}
\label{eq:C4} 
C_4 = C_{\Delta} \, C_{\Linfty \leftarrow \Htwo} \, \tfrac{\bb_5(\sigma)}{\sqrt{\bba_1}} \, 
\max\bigg\{8 \sqrt{6}\,, \tfrac{24 \sqrt{\bba_2(\sigma_0)}}{\bba_0(\sigma_0)}\,, \tfrac{24 \sqrt{6}}{\bba_0(\sigma_0)} \, \sqrt{T} 
\bigg\}\,, 
\end{equation}
as well as 
\begin{equation*}
\begin{gathered}
R_1(t) = \int_{0}^{t} \MyEphipsi{1}{\tau} \, \sqrt{\MyEphiphi{2}{\tau}} \; \dd\tau\,, \\
R_2(t) = \int_{0}^{t} \sqrt{\MyEphipsi{1}{\tau}} \, \sqrt{\int_{0}^{\tau} \MyEphipsi{1}{\widetilde{\tau}} \; \dd \widetilde{\tau}} \sqrt{\MyEphiphi{2}{\tau}} \; \dd\tau\,, 
\end{gathered}
\end{equation*}
this leads to the relation 
\begin{equation*}
\begin{split}
\absbig{R(t)} 
&\leq C_4 \\ 
&\quad \times \bigg(\int_{0}^{t} \sqrt{\MyEphipsi{1}{\tau}} \, \sqrt{\MyEphipsi{2}{\tau}} \, \sqrt{\MyEphiphi{1}{\tau}} \; \dd\tau \\
&\quad\qquad + \normbig{\Ltwo}{\nabla \Delta \psi_0} \int_{0}^{t} \sqrt{\MyEphipsi{2}{\tau}} \, \sqrt{\MyEphiphi{1}{\tau}} \; \dd\tau \\
&\quad\qquad + \normbig{\Ltwo}{\nabla \Delta \psi_0} \int_{0}^{t} \sqrt{\MyEphipsi{1}{\tau}} \, \sqrt{\MyEphiphi{2}{\tau}} \; \dd\tau \\
&\quad\qquad + \int_{0}^{t} \sqrt{\MyEphipsi{2}{\tau}} \, \sqrt{\int_{0}^{\tau} \MyEphipsi{1}{\widetilde{\tau}} \; \dd \widetilde{\tau}} \, 
\sqrt{\MyEphiphi{1}{\tau}} \; \dd\tau \\
&\quad\qquad + R_1(t) + R_2(t)\bigg)\,.
\end{split}
\end{equation*}
We next make use of the fundamental assumption 
\begin{equation*}
\sup_{t \in [0, T]} \MyEphiphi{1}{t} \leq \BoundE{1}\,, \quad 
\int_{0}^{T} \MyEphiphi{2}{t} \; \dd t \leq \BoundE{2}\,,
\end{equation*}
see also~\eqref{eq:E0E1E2}. 
Replacing the interval of integration~$[0, \tau]$ by $[0, t]$ and applying Cauchy's inequality, we have 
\begin{equation*}
\begin{split}
R_2(t) 
&\leq \sqrt{\int_{0}^{t} \MyEphipsi{1}{\widetilde{\tau}} \; \dd \widetilde{\tau}} \int_{0}^{t} \sqrt{\MyEphipsi{1}{\tau}} \, \sqrt{\MyEphiphi{2}{\tau}} \; \dd\tau \\
&\leq \sqrt{\int_{0}^{t} \MyEphipsi{1}{\widetilde{\tau}} \; \dd \widetilde{\tau}} \, \sqrt{\int_{0}^{t} \MyEphipsi{1}{\tau} \; \dd\tau} \, \sqrt{\int_{0}^{t} \MyEphiphi{2}{\tau} \; \dd\tau} \\
&\leq \sqrt{\BoundE{2}} \int_{0}^{t} \MyEphipsi{1}{\tau} \; \dd\tau\,;
\end{split}
\end{equation*}
together with Young's inequality, this shows 
\begin{equation*}
\begin{split}
\absbig{R(t)} 
&\leq C_4 \\ 
&\quad \times \Bigg(\tfrac{1}{2} \, \sqrt{\BoundE{1}} \int_{0}^{t} \Big(\MyEphipsi{1}{\tau} + \MyEphipsi{2}{\tau}\Big) \; \dd\tau \\
&\quad\qquad + \tfrac{1}{2} \, \normbig{\Ltwo}{\nabla \Delta \psi_0} \bigg(T \, \BoundE{1} + \int_{0}^{t} \MyEphipsi{2}{\tau} \; \dd\tau\bigg) \\
&\quad\qquad + \tfrac{1}{2} \, \normbig{\Ltwo}{\nabla \Delta \psi_0} \bigg(\BoundE{2} + \int_{0}^{t} \MyEphipsi{1}{\tau} \; \dd\tau\bigg) \\
&\quad\qquad + \tfrac{1}{2} \, \sqrt{\BoundE{1}} \bigg(\int_{0}^{t} \MyEphipsi{2}{\tau} \; \dd\tau 
+ T \int_{0}^{t} \MyEphipsi{1}{\tau} \; \dd\tau\bigg) \\
&\quad\qquad + R_1(t) + R_2(t)\Bigg) \\
&\leq C_4 \\ 
&\quad \times \Bigg(\tfrac{1}{2} \, \normbig{\Ltwo}{\nabla \Delta \psi_0} \, \big(T \, \BoundE{1} + \BoundE{2}\big) \\
&\quad\qquad + \tfrac{1}{2} \, \Big(\normbig{\Ltwo}{\nabla \Delta \psi_0} + (1 + T) \, \sqrt{\BoundE{1}} + \sqrt{\BoundE{2}} \Big) \int_{0}^{t} \MyEphipsi{1}{\tau} \; \dd\tau\bigg)\\
&\quad\qquad + \Big(\tfrac{1}{2} \, \normbig{\Ltwo}{\nabla \Delta \psi_0} + \sqrt{\BoundE{1}}\Big) \int_{0}^{t} \MyEphipsi{2}{\tau} \; \dd\tau \\
&\quad\qquad + R_1(t)\Bigg)\,.
\end{split}
\end{equation*}
Under the smallness requirement 
\begin{equation*}
C_4 \, \Big(\tfrac{1}{2} \, \normbig{\Ltwo}{\nabla \Delta \psi_0} + \sqrt{\BoundE{1}}\Big) \leq \tfrac{1}{2}\,, 
\end{equation*}
the term involving~$E_2$ can be absorbed and we arrive at an estimate of the form 
\begin{equation}
\begin{split}
&\MyEphipsi{1}{t} + \int_{0}^{t} \MyEphipsi{2}{\tau} \; \dd\tau \\
&\quad \leq \MyEphipsiAtzero{1} \\
&\qquad + \Phi_2\Big(C_{\text{PF}}, C_{\Delta}, C_{\Lfour \leftarrow \Hone}, C_{\Linfty \leftarrow \Htwo}, \tfrac{1}{\bba_0(\sigma_0)}, \tfrac{1}{\bba_1}, \bba_2(\sigma_0), \tfrac{1}{\bb_3}, \\
&\quad\qquad\qquad \bba_4(\sigma_0), \bb_5(\sigma), \bb_6(\sigma), T, \normbig{\Ltwo}{\nabla \Delta \psi_0}, \BoundE{1}, \BoundE{2}\Big) \\ 
&\quad\qquad \times \bigg(\MyEphipsiAtzero{1} + \normbig{\Ltwo}{\nabla \Delta \psi_0} \\
&\quad\qquad\qquad + \int_{0}^{t} \Big(1 + \sqrt{\MyEphiphi{2}{\tau}}\Big) \, \MyEphipsi{1}{\tau} \; \dd\tau\bigg)\,, 
\end{split}
\end{equation}
see also~\eqref{eq:C4}.
This corresponds to the relation 
\begin{equation*}
\MyEphipsi{1}{t} 
\leq \MyEphipsiAtzero{1} + \Phi_2 \, \delta + \Phi_2 \int_{0}^{t} \omega(\tau) \, \MyEphipsi{1}{\tau} \; \dd\tau   
\end{equation*}
involving a (small) constant $\delta > 0$ and the weight function 
\begin{equation*}
\omega(t) = 1 + \sqrt{\MyEphiphi{2}{t}}\,;
\end{equation*}
due to the fact that Cauchy's inequality ensures boundedness from above 
\begin{equation*}
\int_{0}^{t} \omega(\tau) \; \dd\tau \leq \overline{\omega} = T + \sqrt{T} \, \sqrt{\BoundE{2}}
\end{equation*}
and that the solution to the associated non-autonomous homogeneous linear differential equation fulfills 
\begin{equation*}
f'(t) = \Phi_2 \, \omega(t) \, f(t)\,, \qquad 
f(t) = \exp\bigg(\Phi_2 \int_{0}^{t} \omega(\tau) \; \dd\tau\bigg) \, f(0) \leq \ee^{\Phi_2 \, \overline{\omega}} \, f(0)\,,
\end{equation*}
a Gronwall-type inequality leads to an upper bound of the form 
\begin{equation*}
\MyEphipsi{1}{t} \leq \Phi_3 \big(\MyEphipsiAtzero{1} + \delta\big)\,, \quad t \in [0,T]\,.
\end{equation*}
More precisely, we obtain an energy estimate of the form  
\begin{equation}
\label{eq:EnergyEstimate}
\begin{split}
&\MyEphipsi{0}{t} + \MyEphipsi{1}{t} + \int_{0}^{t} \MyEphipsi{2}{\tau} \; \dd\tau \\
&\quad \leq \Phi\Big(C_{\text{PF}}, C_{\Delta}, C_{\Lfour \leftarrow \Hone}, C_{\Linfty \leftarrow \Htwo}, \tfrac{1}{\bba_0(\sigma_0)}, \tfrac{1}{\bba_1}, \bba_2(\sigma_0), \tfrac{1}{\bb_3}, \\
&\qquad\qquad \bba_4(\sigma_0), \bb_5(\sigma), \bb_6(\sigma), T, \MyEphipsiAtzero{1}, \normbig{\Ltwo}{\nabla \Delta \psi_0}, \BoundE{1}, \BoundE{2}\Big) \\ 
&\quad\qquad \times \Big(\MyEphipsiAtzero{1} + \normbig{\Ltwo}{\nabla \Delta \psi_0}\Big)\,; 
\end{split}
\end{equation}
due to the fact that the quantities $\frac{1}{\bba_0(\sigma_0)}$, $\frac{1}{\bba_1}$, and $\bba_4(\sigma_0)$ remain bounded for $a \to 0_{+}$, this relation holds uniformly for $a \in [0, \overline{a}]$.
A fixed-point argument detailed below proves the following statement;
uniqueness of the solution is provided in the situation of Remark~\ref{Remark1}. 

\BEMERKUNGBARBARA{
\paragraph{Difference energy estimate.}
We finally provide a lower order energy estimate for the difference $\psi^1-\psi^2$ between solutions $\psi^i$ to ~\eqref{eq:GeneralModelReformulationDifferentiation1} with~$\alpha$ and~$r$ substituted by~$\alphaphi$ and~$\rphi$ with $\phi=\phi^i$, $i\in\{1,2\}$. 
This difference satisfies a similar equation --- actually the linear part remains the same. 
Imposing the same initial data for $\psi^1,\psi^2,\phi^1,\phi^2$, and using the abbreviations $\hat{\psi}=\psi^1-\psi^2$, $\hat{\phi}=\phi^1-\phi^2$, $\alpha^1=(1+\bb_5(\sigma) \, \partial_{t} \phi^1)$, after integration with respect to time, we can write this equation as follows.
\begin{equation*}
\begin{split}
&\alpha^1 \partial_{tt} \hat{\psi}(t) 
- \bba_1 \, \Delta \partial_{t} \hat{\psi}(t) 
+ \bba_2(\sigma_0) \, \Delta^2 \hat{\psi}(t) 
- \bb_3 \, \Delta \hat{\psi}(t)\\
&\qquad + \bba_4(\sigma_0) \, \int_{0}^{t} \Delta^2 \hat{\psi}(\tau) \; \dd\tau 
+\hat{r}(t)=0 \,,
\end{split}
\end{equation*}
where 
\begin{equation*}
\hat{r}(t)=\bb_5(\sigma) \, \partial_{t} \hat{\phi} \partial_{tt} \psi^2(t)
+ 2 \, \bb_6(\sigma) \, \nabla \phi^1(t) \cdot \nabla \partial_{t} \hat{\psi}(t) 
+ 2 \, \bb_6(\sigma) \, \nabla \hat{\phi}(t) \cdot \nabla \partial_{t} \psi^2(t)\,.
\end{equation*}
Testing with $\partial_{t}\hat{\psi}(t)$ and integrating by parts, using the fact that the boundary terms vanish and that $\alpha^1$ is bounded from below by $\underline{\alpha}>0$, we obtain the identity
\begin{equation*}
\begin{split}
&\tfrac{1}{2} \, \partial_{t} \normBig{\Ltwo}{\sqrt{\alpha^1(t)} \, \partial_{t} \hat{\psi}(t)}^2
+ \bba_1 \, \normBig{\Ltwo}{\nabla \partial_{t} \hat{\psi}(t)}^2 
+ \bba_2(\sigma_0) \, \tfrac{1}{2} \, \partial_{t} \normBig{\Ltwo}{\Delta \hat{\psi}(t)}^2\\
&\qquad  
+ \bb_3 \, \tfrac{1}{2} \, \partial_{t} \normBig{\Ltwo}{\nabla \hat{\psi}(t)}^2
+ \bba_4(\sigma_0) \, \scpBig{\Ltwo}{\int_{0}^{t} \Delta\hat{\psi}(\tau) \; \dd\tau}{\Delta\partial_{t}\hat{\psi}(t)}\\
&\qquad 
+\scpBig{\Ltwo}{\hat{r}(t)-\tfrac12 \partial_{t}\alpha^1(t)\partial_{t}\hat{\psi}(t)}{\partial_{t}\hat{\psi}(t)}
=0 \,.
\end{split}
\end{equation*}
Integrating with respect to time and using the fact that the terms evaluated at initial time vanish, we get
\begin{equation*}
\begin{split}
&\tfrac{1}{2} \, \normBig{\Ltwo}{\sqrt{\alpha^1(t)} \, \partial_{t} \hat{\psi}(t)}^2
+ \bba_1 \, \int_0^t\normBig{\Ltwo}{\nabla \partial_{t} \hat{\psi}(\tau)}^2 \; \dd\tau
+ \tfrac{\bba_2(\sigma_0)}{2} \, \normBig{\Ltwo}{\Delta \hat{\psi}(t)}^2\\
&\qquad  
+ \frac{\bb_3}{2} \, \normBig{\Ltwo}{\nabla \hat{\psi}(t)}^2
=- \bba_4(\sigma_0) \, \int_0^t\scpBig{\Ltwo}{\int_{0}^{\tau} \Delta\hat{\psi}(\widetilde{\tau}) \; \dd\widetilde{\tau}}{\Delta\partial_{\tau}\hat{\psi}(\tau)}\; \dd\tau\\
&\qquad -\int_0^t\scpBig{\Ltwo}{\hat{r}(\tau)-\tfrac12 \partial_{t}\alpha^1(\tau)\partial_{t}\hat{\psi}\tau)}{\partial_{t}\hat{\psi}(\tau)}\; \dd\tau
=0\,,
\end{split}
\end{equation*}
where we can estimate as follows.
\begin{equation*}
\begin{split}
&-\bba_4(\sigma_0) \, \int_0^t\scpBig{\Ltwo}{\int_{0}^{\tau} \Delta\hat{\psi}(\widetilde{\tau}) \; \dd\widetilde{\tau}}{\Delta\partial_{\tau}\hat{\psi}(\tau)}\; \dd\tau\\
&=\bba_4(\sigma_0) \, \int_0^t\normBig{\Ltwo}{\Delta\hat{\psi}(\tau)}^2\; \dd\tau
- \bba_4(\sigma_0) \, \scpBig{\Ltwo}{\int_{0}^{t} \Delta\hat{\psi}(\tau) \; \dd\tau}{\Delta\hat{\psi}(t)}\\
&\qquad \leq \bba_4(\sigma_0)\int_0^t\normbig{\Ltwo}{\Delta\hat{\psi}(\tau)}^2\; \dd\tau
+\tfrac{\bba_2(\sigma_0)}{4} \normbig{\Ltwo}{\Delta\hat{\psi}(t)}^2
+\tfrac{\bba_4(\sigma_0)^2}{\bba_2(\sigma_0)} T \int_0^t \normbig{\Ltwo}{\Delta\hat{\psi}(\tau)}^2 \; \dd\tau
\end{split}
\end{equation*}
and 
\begin{equation*}
\begin{split}
&\absBig{\int_0^t\scpBig{\Ltwo}{\hat{r}(\tau)-\tfrac12 \partial_{t}\alpha^1(\tau)\partial_{t}\hat{\psi}(\tau)}{\partial_{t}\hat{\psi}(\tau)}\; \dd\tau}\\
&\leq 
C_{\text{PF}} \, C_{\Lfour \leftarrow \Hone}
\int_0^t\normBig{\Lfouroverthree}{\hat{r}(\tau)-\tfrac12 \partial_{t}\alpha^1(\tau)\partial_{t}\hat{\psi}(\tau)}\normBig{\Ltwo}{\nabla\partial_{t}\hat{\psi}(\tau)}\; \dd\tau\\
&\leq
\tfrac{\bba_1}{2} \int_0^t \normBig{\Ltwo}{\nabla\partial_{t}\hat{\psi}(\tau)}^2\; \dd\tau
+\tfrac{C_{\text{PF}}^2 \, C_{\Lfour \leftarrow \Hone}^2}{2\bba_1}
\int_0^t\normBig{\Lfouroverthree}{\hat{r}(\tau)-\tfrac12\partial_{t}\alpha^1(\tau)\partial_{t}\hat{\psi}(\tau)}^2
\; \dd\tau
\end{split}
\end{equation*}
where by H\"older's inequality with exponents $\frac32,3$,
\begin{equation*}
\begin{split}
&\int_0^t\normBig{\Lfouroverthree}{\hat{r}(\tau)-\tfrac12\partial_{t}\alpha^1(\tau)\partial_{t}\hat{\psi}(\tau)}^2 \, \dd\tau\\
&\leq
\int_0^t\Bigl(
2\bb_5(\sigma) \, \normBig{\Ltwo}{\partial_{t} \hat{\psi}(\tau)}^2 
\normBig{\Lfour}{\partial_{tt} \phi^1(\tau)}^2
+4\bb_5(\sigma) \, \normBig{\Ltwo}{\partial_{t} \hat{\phi}(\tau)}^2 
\normBig{\Lfour}{\partial_{tt} \psi^2(\tau)}^2\, \\
&\qquad
+ 8 \, \bb_6(\sigma) \, \normBig{\Ltwo}{\nabla \partial_{t} \hat{\psi}(\tau)}^2 
\normBig{\Lfour}{\nabla \phi^1(\tau)}^2
+ 8 \, \bb_6(\sigma) \, \normBig{\Ltwo}{\nabla \hat{\phi}(\tau)}^2 
\normBig{\Lfour}{\nabla \partial_{t} \psi^2(\tau)}^2\Bigr)\, \dd\tau\\
&\leq
2\bb_5(\sigma) \, C_{\text{PF}}^2 \, C_{\Lfour \leftarrow \Hone}^2\, 
\sup_{\tau\in[0,t]}\normBig{\Ltwo}{\nabla\partial_{tt} \psi^1(\tau)}^2\,
\int_0^t\normBig{\Ltwo}{\partial_{t} \hat{\psi}(\tau)}^2 \\
&\quad+4\bb_5(\sigma) \, C_{\text{PF}}^2 \, C_{\Lfour \leftarrow \Hone}^2\, 
\sup_{\tau\in[0,t]} \normBig{\Ltwo}{\nabla\partial_{tt} \psi^2(\tau)}^2\, 
\int_0^t\normBig{\Ltwo}{\partial_{t} \hat{\phi}(\tau)}^2 \, \dd\tau
\\
&\quad+ 8 \, \bb_6(\sigma)^2 \, C_{\Delta}^2 \, C_{\Lfour \leftarrow \Hone}^2\, 
\sup_{\tau\in[0,t]} \normBig{\Ltwo}{\Delta \phi^1(\tau)}^2
\, \int_0^t\normBig{\Ltwo}{\nabla \partial_{t} \hat{\psi}(\tau)}^2\, \dd\tau 
\\
&\quad+ 8 \, \bb_6(\sigma) \, C_{\Delta}^2 \, C_{\Lfour \leftarrow \Hone}^2
\sup_{\tau\in[0,t]} \normBig{\Ltwo}{\Delta \partial_{t} \psi^2(\tau)}
\int_0^t\normBig{\Ltwo}{\nabla \hat{\phi}(\tau)} ^2\, \dd\tau
\,.
\end{split}
\end{equation*}
Thus, with the notation 
\begin{equation*}
\begin{split}
E_{-1}(t) 
&= \tfrac{1}{2} \, \normBig{\Ltwo}{\sqrt{\alpha^1(t)} \, \partial_{t} \hat{\psi}(t)}^2
+ \tfrac{\bba_2(\sigma_0)}{4} \, \normbig{\Ltwo}{\Delta \hat{\psi}(t)}^2 
 + \tfrac{\bb_3}{2} \, \normbig{\Ltwo}{\nabla \hat{\psi}(t)}^2\,, \\
\Ephi_{-1}(t) 
&= \tfrac{1}{2} \, \normBig{\Ltwo}{\sqrt{\alpha^1(t)} \, \partial_{t} \hat{\phi}(t)}^2
+ \tfrac{\bba_2(\sigma_0)}{4} \, \normbig{\Ltwo}{\Delta \hat{\phi}(t)}^2 
 + \tfrac{\bb_3}{2} \, \normbig{\Ltwo}{\nabla \hat{\phi}(t)}^2\,, 
\end{split}
\end{equation*}
and assuming that
\begin{equation*}
E_{i0}(t)\leq \bE_0\,, \quad
E_{i1}(t)\leq \bE_1\,, \quad
\Ephi_{i0}(t)\leq \bE_0\,, \quad
\Ephi_{i1}(t)\leq \bE_1\,, \quad i\in\{1,2\}\,,
\end{equation*}
where $E_{ij}$, $\Ephi_{ij}$ are the energies defined in \eqref{eq:E0E1E2} with $\psi,\phi$ replaced by $\psi^i,\phi^i$, $i\in\{1,2\}$, $j\in\{0,1\}$, as well as the smallness coondition
\begin{equation*}
 8 \, \bb_6(\sigma)^2 \, C_{\Delta}^2 \, C_{\Lfour \leftarrow \Hone}^2\, \Bigl(
3\, \normBig{\Ltwo}{\Delta \phi^1(0)}^2 + 6 T \, \tfrac{\overline{\alpha}\bE_1}{\bb_3}\Bigr)
\leq \tfrac{\bba_1}{4}\,,
\end{equation*}
we arrive at an energy estimate of the form
\begin{equation*}
E_{-1}(t)+ \tfrac{\bba_1}{4} \, \int_0^t\normBig{\Ltwo}{\nabla \partial_{t} \hat{\psi}(\tau)}^2 \; \dd\tau
\leq C_6 \int_0^t E_{-1}(\tau)\, d\tau +C_7 \int_0^t \Ephi_{-1}(\tau)\, d\tau
\end{equation*}
with 
\begin{equation*}
\begin{split}
&C_6= \tfrac{4}{\bba_0(\sigma_0)} \Bigl(1+\tfrac{1}{\bba_0(\sigma_0)} T\Bigr) + 
2C_{\text{PF}}^2 \, C_{\Lfour \leftarrow \Hone}^2 \tfrac{\bba_5(\sigma)}{\underline{\alpha}^2}\bE_1\\
&C_7= 
2C_{\text{PF}}^2 \, \Bigl(C_{\Lfour \leftarrow \Hone}^2 \tfrac{\bba_5(\sigma)}{\underline{\alpha}}+8C_\Delta\,\tfrac{\overline{\alpha}\bba_6(\sigma)}{\bb_3}\Bigr)\bE_1\,.
\end{split}
\end{equation*}
Gronwall's inequality therefore implies
\begin{equation*}
\sup_{t\in[0,T]} E_{-1}(t)
\leq C_7 \bigl(1+C_6 e^{C_6T}\Bigr)\int_0^T\Ephi_{-1}(\tau)\,\dd\tau
\leq C_7 \bigl(1+C_6 e^{C_6T}\Bigr)T\sup_{t\in[0,T]}\Ephi_{-1}(t)
\end{equation*}
thus, together with the smallness condition
\begin{equation*}
C_7 \bigl(1+C_6 e^{C_6T}\Bigr)T <1\,,
\end{equation*}
contractivity of the fixed point operator defined below, with respect to the norm induced by $\sup_{t\in[0,T]} E_{-1}(t)$.
}

\begin{proposition}
\label{thm:Proposition}
Consider the nonlinear damped wave equation~\eqref{eq:GeneralModel} under homogeneous Dirichlet boundary conditions~\eqref{eq:AssumptionDirichlet} and the initial conditions~\eqref{eq:InitialCondition}.
Suppose that the prescribed initial data satisfy the regularity and compatibility conditions 
\begin{equation*}
\psi_0, \psi_1 \in \Hthree(\Omega) \cap \Honezero(\Omega)\,, \quad \Delta \psi_0, \Delta \psi_1, \psi_2 \in \Honezero(\Omega)\,;
\end{equation*}
assume in addition that for $\norm{\Ltwo}{\Delta \psi_0}$, $\norm{\Ltwo}{\nabla \Delta \psi_0}$, and upper bounds $\Bounde{0}, \Bounde{1} > 0$ on the initial energies 
\begin{equation*}
\begin{gathered}
\normbig{\Ltwo}{\psi_2}^2 + \bba_2(\sigma_0) \, \normbig{\Ltwo}{\Delta \psi_1}^2 + \normbig{\Ltwo}{\nabla \psi_1}^2 \leq \Bounde{0}\,, \\
\normbig{\Ltwo}{\nabla \psi_2}^2 + \bba_2(\sigma_0) \, \normbig{\Ltwo}{\nabla \Delta \psi_1}^2 + \normbig{\Ltwo}{\Delta \psi_1}^2 \leq \Bounde{1}\,, 
\end{gathered}
\end{equation*}
the quantity 
\begin{equation*}
\begin{split}
M\big(\Bounde{0}, \Bounde{1}\big) 
&= \tfrac{C_{\text{PF}}^2 \, C_{\Lfour \leftarrow \Hone}^2 \bb_5(\sigma)}{\bba_1} \, \sqrt{\Bounde{0}} 
+ \tfrac{(C_{\Delta} C_{\Linfty \leftarrow \Htwo} \bb_5(\sigma))^2}{\bb_3} \, \Bounde{1} \\
&\qquad + \tfrac{C_2}{\bba_1} \, \Big(\normbig{\Ltwo}{\Delta \psi_0}^2 + C_3 \, T^2 \, \Bounde{1}\Big) 
+ C_4 \, \Big(\tfrac{1}{2} \, \normbig{\Ltwo}{\nabla \Delta \psi_0} + \sqrt{\Bounde{1}}\Big) 
\end{split}
\end{equation*}
is sufficiently small, see~\eqref{eq:PoincareEmbeddingEllipticity},~\eqref{eq:C1C2C3}, and~\eqref{eq:C4} for the definition of the arising constants. 
Then, there exists a weak solution 
\begin{equation*}
\begin{gathered}
\psi \in X = \Htwo\big([0, T], \Htwodiamond(\Omega)\big) \cap \Winftytwo\big([0, T], \Honezero(\Omega)\big) \cap \Winftyone\big([0, T], \Hthreediamond(\Omega)\big)\,, \\
\Htwodiamond(\Omega) = \big\{\chi \in \Htwo(\Omega): \chi \in \Honezero(\Omega)\big\}\,, \quad 
\Hthreediamond(\Omega) = \big\{\chi \in \Hthree(\Omega): \chi, \Delta \chi \in \Honezero(\Omega)\big\}\,,
\end{gathered}
\end{equation*}
to the associated equation 
\begin{equation*}
\begin{split}
&\partial_{tt} \psi(t) - \, \psi_2 - \bba_1 \, \Delta \big(\partial_{t} \psi(t) - \psi_1\big) 
+ \bba_2(\sigma_0) \, \Delta^2 \big(\psi(t) - \psi_0\big) - \bb_3 \, \Delta \big(\psi(t) - \psi_0\big) \\
&\quad + \bba_4(\sigma_0) \, \int_{0}^{t} \Delta^2 \psi(\tau) \; \dd\tau + \bb_5(\sigma) \, \big(\partial_{tt} \psi(t) \, \partial_{t} \psi(t) - \psi_2 \, \psi_1\big) \\
&\quad 
+ 2 \, \bb_6(\sigma) \, \big(\nabla \partial_{t} \psi(t) \cdot \nabla \psi(t) - \nabla \psi_1 \cdot \nabla \psi_0\big) = 0\,,   
\end{split}
\end{equation*}
obtained by integration with respect to time.
This solution satisfies a priori energy estimates of the form 
\begin{equation*}
\begin{gathered}
\EE_0\big(\psi(t)\big) 
= \normbig{\Ltwo}{\partial_{tt} \psi(t)}^2 + \bba_2(\sigma_0) \, \normbig{\Ltwo}{\Delta \partial_{t} \psi(t)}^2 + \normbig{\Ltwo}{\nabla \partial_{t} \psi(t)}^2\,, \\
\EE_1\big(\psi(t)\big)
= \normbig{\Ltwo}{\nabla \partial_{tt} \psi(t)}^2 + \bba_2(\sigma_0) \, \normbig{\Ltwo}{\nabla \Delta \partial_{t} \psi(t)}^2 
+ \normbig{\Ltwo}{\Delta \partial_{t} \psi(t)}^2\,, \\
\sup_{t \in [0, T]} \EE_0\big(\psi(t)\big) \leq \BoundE{0}\,, \quad \sup_{t \in [0, T]} \EE_1\big(\psi(t)\big) \leq \BoundE{1}\,, \quad 
\int_{0}^{T} \normbig{\Ltwo}{\Delta \partial_{tt} \psi(t)}^2 \; \dd t \leq \BoundE{2}\,, 
\end{gathered}
\end{equation*}
which hold uniformly for $a \in [0, \overline{a}]$.
In particular, the quantity $M(\BoundE{0}, \BoundE{1})$ remains sufficiently small to ensure uniform boundedness and hence non-degeneracy of the first time derivative 
\begin{equation*}
\begin{gathered}
0 < \underline{\alpha} = \tfrac{1}{2} \leq \normbig{\Linfty([0, T], \Linfty(\Omega))}{1 + \bb_5(\sigma) \, \partial_{t} \psi} \leq \overline{\alpha} = \tfrac{3}{2}\,, \\
0 < \tfrac{1}{\overline{\alpha}} = \tfrac{2}{3} \leq \normBig{\Linfty([0, T], \Linfty(\Omega))}{\big(1 + \bb_5(\sigma) \, \partial_{t} \psi\big)^{-1}} \leq \tfrac{1}{\underline{\alpha}} = 2\,.
\end{gathered}
\end{equation*}
\end{proposition}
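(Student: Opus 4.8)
The plan is to build the solution by a fixed-point argument that uses only the a priori energy estimates derived above. For positive constants $\BoundE{0}, \BoundE{1}, \BoundE{2}$ to be pinned down later, set
\begin{equation*}
\nice{B} = \Big\{\phi \in X : \phi \text{ satisfies } \eqref{eq:InitialConditionphipsi},\ \sup_{t \in [0,T]} \MyEphiphi{0}{t} \le \BoundE{0},\ \sup_{t \in [0,T]} \MyEphiphi{1}{t} \le \BoundE{1},\ \int_{0}^{T} \MyEphiphi{2}{t} \; \dd t \le \BoundE{2}\Big\}\,,
\end{equation*}
and for $\phi \in \nice{B}$ let $\psi = \nice{T}(\phi) \in X$ be the solution of the \emph{linear} problem obtained from~\eqref{eq:GeneralModelReformulationDifferentiation1} by freezing $\alpha$ and $r$ at $\alphaphi$ and $\rphi$ --- equivalently, of the time-integrated equation displayed in the statement --- subject to~\eqref{eq:AssumptionDirichlet} and~\eqref{eq:InitialCondition}. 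Existence and uniqueness of this linear solution are obtained by a Galerkin scheme in the eigenbasis of $-\Delta$ with homogeneous Dirichlet conditions, which automatically realises all boundary conditions in~\eqref{eq:AssumptionDirichlet}: testing the Galerkin equations successively with $\partial_{tt}\psi$ and with $\Delta\partial_{tt}\psi$ reproduces precisely the identities~\eqref{eq:GeneralModelReformulationTest1} and~\eqref{eq:GeneralModelReformulationTest2}; since $\phi \in \nice{B}$ guarantees the non-degeneracy of $\alphaphi$ via~\eqref{eq:BoundednessAlpha}, these yield bounds on the Galerkin approximants uniform in the discretisation dimension, so that weak-$*$ compactness and linearity in $\psi$ permit passage to the limit and identification of $\nice{T}(\phi)$ in $X$.

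The decisive step is the self-mapping property $\nice{T}(\nice{B}) \subseteq \nice{B}$. The a priori estimate~\eqref{eq:EnergyEstimate} applies verbatim to $\psi = \nice{T}(\phi)$; using $\tfrac{1}{2} \le \alphaphi \le \tfrac{3}{2}$ from~\eqref{eq:BoundednessAlpha} to compare the frozen energies $E_i(\phi,\psi)$ with the intrinsic energies $\EE_i(\psi)$ of the statement, it gives
\begin{equation*}
\sup_{t \in [0,T]} \EE_0\big(\psi(t)\big) + \sup_{t \in [0,T]} \EE_1\big(\psi(t)\big) + \int_{0}^{T} \normbig{\Ltwo}{\Delta \partial_{tt} \psi(t)}^2 \; \dd t \le \Phi \cdot \big(\MyEphipsiAtzero{1} + \normbig{\Ltwo}{\nabla \Delta \psi_0}\big)\,,
\end{equation*}
with a constant $\Phi$ that is non-decreasing in $\BoundE{1}, \BoundE{2}$ and bounded uniformly for $a \in [0,\overline{a}]$. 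One then fixes $\BoundE{0}, \BoundE{1}, \BoundE{2}$ by a barrier argument: starting from the right-hand side evaluated at a first guess for $\BoundE{1}, \BoundE{2}$, smallness of $M(\Bounde{0}, \Bounde{1})$ --- which subsumes all the absorption and non-degeneracy smallness constraints collected above, namely $C_0\BoundE{1} \le \tfrac{1}{12}$, the bound of step~(iii) of the first energy estimate, $\tfrac{C_2}{\bba_1}\big(\normbig{\Ltwo}{\Delta\psi_0}^2 + C_3 T^2 \BoundE{1}\big) \le \tfrac{1}{4}$, and $C_4\big(\tfrac{1}{2}\normbig{\Ltwo}{\nabla\Delta\psi_0} + \sqrt{\BoundE{1}}\big) \le \tfrac{1}{2}$ --- makes these choices consistent and forces $\MyEphipsi{0}{t}, \MyEphipsi{1}{t} \le \BoundE{0}, \BoundE{1}$ and $\int_0^T \MyEphipsi{2}{t}\,\dd t \le \BoundE{2}$, i.e.\ $\psi \in \nice{B}$.

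To produce a fixed point I would then show that $\nice{T}$ is a strict contraction on $\nice{B}$ for the lower-order difference metric $d(\phi_1,\phi_2)^2 = \sup_{t \in [0,T]} E_{-1}\big((\phi_1-\phi_2)(t)\big)$, where $E_{-1}(\chi) = \tfrac{1}{2}\normbig{\Ltwo}{\sqrt{1 + \bb_5(\sigma)\,\partial_t\phi_1}\,\partial_t\chi}^2 + \tfrac{\bba_2(\sigma_0)}{4}\normbig{\Ltwo}{\Delta\chi}^2 + \tfrac{\bb_3}{2}\normbig{\Ltwo}{\nabla\chi}^2$. Subtracting the two linear equations and integrating in time gives an equation for $\hat\psi = \nice{T}(\phi_1) - \nice{T}(\phi_2)$ with the \emph{same} linear part and a right-hand side linear in $\hat\psi$ and in $\hat\phi = \phi_1-\phi_2$ with coefficients controlled by $\BoundE{0}, \BoundE{1}$; testing with $\partial_t\hat\psi$, integrating by parts, controlling the memory term through quantities such as $\tfrac{1}{\bba_0(\sigma_0)}$ (which stays bounded as $a \to 0_+$), and invoking Gronwall's inequality yields $\sup_{[0,T]} E_{-1}(\hat\psi) \le C_7\big(1 + C_6\,\ee^{C_6 T}\big)\,T\,\sup_{[0,T]} E_{-1}(\hat\phi)$ with $C_7$ proportional to $\BoundE{1}$, hence a contraction once $M(\Bounde{0},\Bounde{1})$, and therefore $\BoundE{1}$, is small enough. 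Since the strong bounds defining $\nice{B}$ pass to weak-$*$ limits, $\nice{B}$ is complete for $d$, so Banach's theorem provides a unique $\psi \in \nice{B}$ with $\nice{T}(\psi) = \psi$; this $\psi$ solves the integrated equation of the statement, inherits the energy bounds --- uniformly in $a \in [0,\overline{a}]$, because $\tfrac{1}{\bba_0(\sigma_0)}, \tfrac{1}{\bba_1}, \bba_2(\sigma_0), \bba_4(\sigma_0)$ all stay bounded as $a \to 0_+$ --- and fulfils the asserted non-degeneracy estimates by~\eqref{eq:BoundednessAlpha} applied with $\phi = \psi$.

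The main obstacle is the self-mapping step: because the constant $\Phi$ in~\eqref{eq:EnergyEstimate} itself depends on the bounds $\BoundE{1}, \BoundE{2}$ one is trying to establish, closing the argument demands a careful bootstrapping together with a simultaneous verification that \emph{all} the smallness conditions accumulated in the energy estimates above are implied by smallness of the single quantity $M(\Bounde{0}, \Bounde{1})$; a secondary technical point is the construction of the linear solution $\nice{T}(\phi)$ and the proof that its Galerkin limit carries the full regularity $\psi \in \Htwo([0,T], \Htwodiamond(\Omega)) \cap \Winftytwo([0,T], \Honezero(\Omega)) \cap \Winftyone([0,T], \Hthreediamond(\Omega))$ and realises every boundary condition in~\eqref{eq:AssumptionDirichlet}.
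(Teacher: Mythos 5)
Your construction of the operator $\nT$, the invariant ball, and the self-mapping step via the a priori estimate~\eqref{eq:EnergyEstimate} coincide with the paper's; the difference lies in how the fixed point is produced. You close the argument with Banach's contraction principle in the lower-order difference metric built from $E_{-1}$, whereas the paper invokes the Fan--Schauder fixed-point theorem in locally convex spaces: the set $\nM$ is weak* compact and convex in $X$, and weak* continuity of $\nT$ is established by a subsequence--subsequence argument using the compact embedding $X \hookrightarrow \wX = \Hone([0,T], \Wfourone(\Omega))$ and an identification of the limit as a solution of the linearised equation. Your route buys uniqueness of the fixed point within the ball for free (the paper obtains only existence and defers uniqueness to the setting of Remark~\ref{Remark1}), and it spares you the weak*-continuity verification; the price is an additional smallness condition of the form $C_7\big(1 + C_6\,\ee^{C_6 T}\big)\,T < 1$ required for contractivity, which is not literally contained in the quantity $M(\Bounde{0}, \Bounde{1})$ as defined in the statement --- since the relevant constant is proportional to $\BoundE{1}$ this can be arranged by shrinking the data and $T$ further, but it must be recorded as an extra hypothesis or shown to follow from the given ones, which is presumably why the paper prefers the Schauder route. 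Your Galerkin construction of the linear solve and the bootstrapping needed to fix $\BoundE{0}, \BoundE{1}, \BoundE{2}$ consistently are details the paper leaves implicit but are compatible with its argument.
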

\begin{proof} 
As indicated before, our proof relies on a fixed-point argument. 
For suitably chosen positive constants $\BoundE{0}, \BoundE{1}, \BoundE{2} > 0$ and suitably chosen inital data 
\begin{equation*}
\psi_0 \in \Hthreediamond(\Omega)\,, \quad \psi_1 \in \Hthreediamond(\Omega)\,, \quad \psi_2 \in \Honezero(\Omega)\,,
\end{equation*}
such that $M(\BoundE{0}, \BoundE{1})$ is sufficiently small, we introduce the nonempty closed subset 
\begin{equation*}
\begin{split}
\nM &= \bigg\{\phi \in X: \phi(0) = \psi_0\,, \partial_{t} \phi(0) = \psi_1\,, \partial_{tt} \phi(0) = \psi_2\,, \\ 
&\qquad \sup_{t \in [0, T]} \EE_0\big(\phi(t)\big) \leq \BoundE{0}\,, \sup_{t \in [0, T]} \EE_1\big(\phi(t)\big) \leq \BoundE{1}\,, 
\int_{0}^{T} \normbig{\Ltwo}{\Delta \partial_{tt} \phi(t)}^2 \; \dd t \leq \BoundE{2}\bigg\}\,. 
\end{split}
\end{equation*}
The nonlinear operator is defined by
\begin{equation*}
\nT: \nM \longrightarrow \nM: \phi \longmapsto \psi\,, 
\end{equation*}
where~$\psi$ is the solution to 
\begin{equation*}
\begin{split}
&\big(1 + \bb_5(\sigma) \, \partial_{t} \phi\big) \, \partial_{ttt} \psi - \bba_1 \, \Delta \partial_{tt} \psi
+ \bba_2(\sigma_0) \, \Delta^2 \partial_{t} \psi
- \bb_3 \, \Delta \partial_{t} \psi + \bba_4(\sigma_0) \, \Delta^2 \psi \\
&\quad + \bb_5(\sigma) \, \partial_{tt} \psi \, \partial_{tt} \phi 
+ 2 \, \bb_6(\sigma) \, \nabla \partial_{tt} \psi \cdot \nabla \phi 
+ 2 \, \bb_6(\sigma) \, \nabla \partial_{t} \psi \cdot \nabla \partial_{t} \phi = 0\,;
\end{split}
\end{equation*}
that is, in~\eqref{eq:GeneralModelReformulationDifferentiation1}, we replace~$\alpha$ and~$r$ by~$\alphaphi$ and~$\rphi$, see also~\eqref{eq:alpha_r_phi}. 
\begin{enumerate}[(i)]
\item 
\emph{Well-definedness.} 
The a priori energy estimate~\eqref{eq:EnergyEstimate} deduced before implies well-definedness and self-mapping of~$\nT$ into~$\nM$.
\item 
\emph{Continuity.} 
The set~$\nM$ is a weak* compact and convex subset of the Banach space $X$; 
thus, for ensuring existence of a fixed point of~$\nT$ from the general version of Schauder's Fixed Point Theorem in locally convex topological spaces, see \MyReference{Fan1952}, we have to prove weak* continuity of $\nT$.
For any sequence $(\phik)_{k \in \NN_{\geq 0}}$ in~$\nM$ converging weakly* to some $\phistar\in\nM$, the sequence of corresponding images defined by 
\begin{equation*}
\psik = \nT\big(\phik\big) \in \nM\,, \quad k \in \NN_{\geq 0}\,, 
\end{equation*}
is bounded in~$X$; 
hence, there exists a subsequence that converges to a function~$\psistar \in \nM$ in the following sense 
\begin{equation}\label{weakconvX}
\begin{gathered}
\psik \stackrel{*}{\rightharpoonup} \psistar \text{ in } X \text{ as } k \to \infty\,, \\
\psik \rightarrow \psistar \text{ in } \wX = \Hone\big([0, T], \Wfourone(\Omega)\big) \text{ as } k \to \infty\,, 
\end{gathered}
\end{equation}
with compact embedding $X \hookrightarrow \wX$. 
We apply a subsequence-subsequence argument for proving weak* convergence of~$\psik$ to~$\nT(\phistar)$. 
For this purpose, we consider an arbitrary weakly* convergent subsequence of $(\psik)_{k \in \NN_{\geq 0}}$ and prove that its limit~$\psistar$ concides with~$\nT(\phistar)$. 
Due to boundedness in~$X$, there is a sub-subsequence (not relabeled in the following) which converges in the sense of~\eqref{weakconvX}; 
the same type  of convergence can be assumed for the corresponding subsequence of preimages (also not relabeled) $(\phik)_{k \in \NN_{\geq 0}}$ to~$\phistar$.
It remains to verify the solution property $\psistar = \nT(\phistar)$.
\item 
\emph{Verification of solution property.}
We employ convenient abbreviations for the linear and the nonlinear terms 
\begin{equation}
\label{eq:LN}
\begin{split}
\big(\nLa \chi\big)(t) &= \partial_{tt} \chi(t) - \bba_1 \, \Delta \partial_{t} \chi(t) + \bba_2(\sigma_0) \, \Delta^2 \chi(t) - \bb_3 \, \Delta \chi(t) \\
&\qquad + \bba_4(\sigma_0) \, \int_{0}^{t} \Delta^2 \chi(\tau) \; \dd\tau\,, \\
\nLa_0 &= - \, \psi_2 + \bba_1 \, \Delta \psi_1 - \bba_2(\sigma_0) \, \Delta^2 \psi_0+ \bb_3 \, \Delta \psi_0\,, \\
\nN(\phi, \chi) &= \bb_5(\sigma) \, \partial_{tt} \chi \, \partial_{t} \phi + 2 \, \bb_6(\sigma) \, \nabla \partial_{t} \chi \cdot \nabla \phi\,, \\
\nN_0 &= - \, \bb_5(\sigma) \, \psi_2 \, \psi_1 - 2 \, \bb_6(\sigma) \, \nabla \psi_1 \cdot \nabla \psi_0\,;  
\end{split}
\end{equation}
the relation 
\begin{equation*}
\nLa \psik + \nLa_0 + \nN\big(\phik, \psik\big) + \nN_0 = 0
\end{equation*}
thus corresponds to the given reformulation of the defining equation, obtained by integration with respect to time. 
In order to verify that~$\psistar$ is a solution to 
\begin{equation*}
\nLa \psistar + \nLa_0 + \nN\big(\phistar, \psistar\big) + \nN_0 = 0\,, 
\end{equation*}
we consider the difference 
\begin{equation*}
\begin{split}
&\nLa \big(\psik - \psistar\big) + \nN\big(\phik, \psik\big) - \nN\big(\phistar, \psistar\big) \\
&\quad = \nLa \big(\psik - \psistar\big) + \nN\big(\phik - \phistar, \psik\big) 
+ \nN\big(\phistar, \psik - \psistar\big)\,. 
\end{split}
\end{equation*}
Due to the fact that $\phik \stackrel{*}{\rightharpoonup} \phistar$ in~$X$ as $k \to \infty$, the linear contribution tends to zero in $\Linfty([0, T], \Hminus(\Omega))$.
The first terms in the nonlinearity satisfy 
\begin{equation*}
\begin{split}
&\bb_5(\sigma) \, \normbig{\Linfty([0, T], \Lfour(\Omega))}{\partial_{tt} \psik} \, 
\normbig{\Ltwo([0, T], \Lfour(\Omega))}{\partial_{t} \big(\phik - \phistar\big)} \\
&\qquad + 2 \, \bb_6(\sigma) \, \normbig{\Linfty([0, T], \Lfour(\Omega))}{\nabla \partial_{t} \psik} \, 
\normbig{\Ltwo([0, T], \Lfour(\Omega))}{\nabla \big(\phik - \phistar\big)} \\
&\qquad + 2 \, \bb_6(\sigma) \, \normbig{\Ltwo([0, T], \Lfour(\Omega))}{\nabla \partial_{t} \big(\psik - \psistar\big)} \, 
\normbig{\Linfty([0, T], \Lfour(\Omega))}{\nabla \phistar} \\
&\quad \leq C_{\Lfour \leftarrow \Hone} \, \Big(\big(\bb_5(\sigma) 
+ 2 \, \bb_6(\sigma)\big)  \, \normbig{X}{\psik} \, \normbig{\wX}{\phik - \phistar} \\
&\qquad + 2 \, \bb_6(\sigma) \, \normbig{\wX}{\psik - \psistar} \, \normbig{X}{\phistar}\Big) 
\end{split}
\end{equation*}
and therefore tend to zero by the strong convergence of~$\phik$ and~$\psik$ in~$\wX$;
for any $v \in \Ltwo([0, T], \Ltwo(\Omega))$, due to the fact that 
\begin{equation*}
\begin{gathered}
\partial_{tt} (\psik - \psistar) \rightharpoonup 0 \text{ in } \Ltwo\big([0, T], \Ltwo(\Omega)\big) \text{ as } k \to \infty\,, \\
\partial_{t} \phistar \, v \in \Ltwo\big([0, T], \Ltwo(\Omega)\big)\,, 
\end{gathered}
\end{equation*}
we further have  
\begin{equation*}
\bb_5(\sigma) \int_{0}^{T} \scpBig{\Ltwo}{\partial_{tt} \big(\psik(t) - \psistar(t)\big)}{\partial_{t} \phistar(t) \, v(t)} \; \dd t 
\to 0 \text{ as } k \to \infty\,, 
\end{equation*}
which concludes the proof. 
\hfill $\diamond$ 
\end{enumerate}
\end{proof}

\begin{remark}
\label{Remark1}
Our result compares with \MyReference{Kaltenbacher2017}, where under the stronger regularity requirements $\psi_0\in \Hfour(\Omega)$, $\psi_1 \in \Hthree(\Omega)$, $\psi_2 \in \Htwo(\Omega)$ and additional compatibility conditions on the initial data existence and uniqueness of a solution 
\begin{equation*}
\begin{split}
\psi &\in \Hthree\big((0,\infty), \Ltwo(\Omega)\big) \cap \Winftytwo\big((0,\infty), \Hone(\Omega)\big) \cap \Htwo\big((0,\infty), \Htwo(\Omega)\big) \\
&\qquad \cap \Winftyone\big((0,\infty), \Hthree(\Omega)\big) \cap \Hone\big((0,\infty), \Hfour(\Omega)\big) \cap \Linfty\big((0,\infty), \Hfour(\Omega)\big)
\end{split}
\end{equation*} 
to the general model is proven. 
\end{remark}
\section{Limiting systems}
\label{SectionLimit}
The transition from the Brunnhuber--Jordan--Kuznetsov equation to the Kuznetsov and Westervelt equations permits a significant reduction of the temporal order of differentiation from three to two, which is for instance of relevance with regard to numerical simulations.
In this section, we rigorously justify this limiting process under a suitable compatibility condition on the initial data. 

\paragraph{Situation.}
We consider the unifying representation~\eqref{eq:GeneralModel} including (BJK), (BCK), (BJW), and (BCW), respectively; 
for the sake of clearness, we indicate the dependence of the solution on the decisive parameter $a > 0$.
We suppose that the assumptions of Proposition~\ref{thm:Proposition} are satisfied; 
note that the prescribed initial data are independent of $a > 0$ and that the fundamental smallness requirement on $M(\Bounde{0}, \Bounde{1})$ or $M(\BoundE{0}, \BoundE{1})$, respectively, can be fulfilled uniformly for $a \in (0, \overline{a}]$.
The main result of this work, given below, ensures convergence in a weak sense towards the solution of the Kuznetsov and Westervelt equation, respectively.  
In contrast to Proposition~\ref{thm:Proposition}, the canonical solution space is now 
\begin{equation*}
X_0 = \Htwo\big([0, T], \Htwodiamond(\Omega)\big) \cap \Winftytwo\big([0, T], \Honezero(\Omega)\big)\,, 
\end{equation*}
that is, we employ the regularity properties 
\begin{equation*}
\int_{0}^{T} \normbig{\Ltwo}{\Delta \partial_{tt} \psia(t)}^2 
+ \, \underset{t \in [0, T]}{\text{ess sup}} \, \normbig{\Ltwo}{\nabla \partial_{tt} \psia(t)} < \infty\,; 
\end{equation*}
due to the fact that $\bba_2(\sigma_0) \to 0$ as $a \to 0_{+}$ and hence the terms 
\begin{equation*}
\bba_2(\sigma_0) \, \normbig{\Ltwo}{\Delta \partial_{t} \psi(t)}^2\,, \quad 
\bba_2(\sigma_0) \, \normbig{\Ltwo}{\nabla \Delta \partial_{t} \psi(t)}^2 
\end{equation*}
arising in the energy estimates vanish, the higher regularity of the solution space~$X$ can not be achieved. 

\begin{theorem}
In the situation of Proposition~\ref{thm:Proposition}, assume in addition that the prescribed initial data satisfy the consistency condition 
\begin{equation}
\label{eq:ICConsistency}
\psi_2 - \bbzero_1 \, \Delta \psi_1 - \bb_3 \, \Delta \psi_0 + \bb_5(\sigma) \, \psi_2 \, \psi_1 + 2 \, \bb_6(\sigma) \, \nabla \psi_1 \cdot \nabla \psi_0 = 0\,.
\end{equation}
For any $a \in (0, \overline{a}]$, let $\psia:[0, T] \to \Ltwo(\Omega)$ denote the solution to the nonlinear damped wave equation 
\begin{equation*}
\begin{split}
&\partial_{ttt} \psia(t) 
- \bba_1 \, \Delta \partial_{tt} \psia(t) 
+ \bba_2(\sigma_0) \, \Delta^2 \partial_{t} \psia(t) 
- \bb_3 \, \Delta \partial_{t} \psia(t) \\
&\quad + \bba_4(\sigma_0) \, \Delta^2 \psia(t) 
+ \partial_{tt} \Big(\tfrac{1}{2} \, \bb_5(\sigma) \, \big(\partial_{t} \psia(t)\big)^2 + \bb_6(\sigma) \, \abs{\nabla \psia(t)}^2\Big) = 0
\end{split}
\end{equation*}
under homogeneous Dirichlet boundary conditions and the initial conditions 
\begin{equation*}
\psia(0) = \psi_0\,, \quad \partial_{t} \psia(0) = \psi_1\,, \quad \partial_{tt} \psia(0) = \psi_2\,,
\end{equation*}
or of the following reformulation obtained by integration and application of~\eqref{eq:ICConsistency}
\begin{equation*}
\begin{split}
&\partial_{tt} \psia(t) 
- \bbzero_1 \, \Delta \partial_{t} \psia(t) 
- \big(\bba_1 - \bbzero_1\big) \, \big(\Delta \partial_{t} \psia(t) - \Delta \psi_1\big) \\
&\quad + \bba_2(\sigma_0) \, \big(\Delta^2 \psia(t) - \Delta^2 \psi_0\big) 
- \bb_3 \, \Delta \psia(t) 
+ \bba_4(\sigma_0) \int_{0}^{t} \Delta^2 \psia(\tau) \; \dd\tau \\
&\quad + \bb_5(\sigma) \, \partial_{tt} \psia(t) \, \partial_{t} \psia(t) + 2 \, \bb_6(\sigma) \, \nabla \partial_{t} \psia(t) \cdot \nabla \psia(t) = 0\,,
\end{split}
\end{equation*}
respectively, see~\eqref{eq:GeneralModel} and~\eqref{eq:GeneralModelReformulationIntegrationF}. 
Then, as $a \to 0_{+}$, the family $(\psia)_{a \in (0, \overline{a}]}$ converges to the solution $\psizero:[0, T] \to \Ltwo(\Omega)$ of the limiting system
\begin{equation}
\label{eq:Solution2}
\begin{split}
&\partial_{tt} \psizero(t) - \bbzero_1 \, \Delta \partial_{t} \psizero(t) - \bb_3 \, \Delta \psizero(t) \\
&\quad + \bb_5(\sigma) \, \partial_{tt} \psizero(t) \, \partial_{t} \psizero(t) 
+ 2 \, \bb_6(\sigma) \, \nabla \partial_{t} \psizero(t) \cdot \nabla \psizero(t) = 0\,,
\end{split}
\end{equation}
see~\eqref{eq:GeneralModelLimit}; 
more precisely, for the solution to the associated weak formulation, obtained by testing with $v \in \Lone([0, T], \Honezero(\Omega))$ and performing integration-by-parts, convergence is ensured in the following sense
\begin{equation*}
\psia \stackrel{*}{\rightharpoonup} \psizero \text{ in } X_0 \text{ as } a \to 0_{+}\,. 
\end{equation*}
\end{theorem}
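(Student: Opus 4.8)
\emph{Approach.} The plan is to pass to the limit $a \to 0_{+}$ in the integrated reformulation stated in the theorem, exploiting that, by Proposition~\ref{thm:Proposition}, the energy bounds
\begin{equation*}
\sup_{t\in[0,T]}\Big(\normbig{\Ltwo}{\partial_{tt}\psia(t)}^2 + \normbig{\Ltwo}{\nabla\partial_{tt}\psia(t)}^2 + \normbig{\Ltwo}{\Delta\partial_{t}\psia(t)}^2\Big) + \int_0^T\normbig{\Ltwo}{\Delta\partial_{tt}\psia(t)}^2\,\dd t \leq C
\end{equation*}
hold uniformly for $a\in(0,\overline{a}]$, while $\bba_2(\sigma_0),\bba_4(\sigma_0),\bba_1-\bbzero_1\to 0$ and $\bba_0(\sigma_0)\geq\tfrac{\nu\Lambda}{c_0^2}$, $\bba_1\geq\nu\Lambda$ stay bounded away from zero. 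Since $\psia(t)=\psi_0+\int_0^t\partial_t\psia(\tau)\,\dd\tau$, this shows that $(\psia)_{a\in(0,\overline{a}]}$ is bounded in $X_0=\Htwo([0,T],\Htwodiamond(\Omega))\cap\Winftytwo([0,T],\Honezero(\Omega))$, so there are a null sequence $(a_k)_k$ and a limit $\psizero\in X_0$ with $\psiak\stackrel{*}{\rightharpoonup}\psizero$ in $X_0$. The argument works verbatim for $\sigma=1$ (Kuznetsov limit) and $\sigma=0$ (Westervelt limit).

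\emph{Compactness.} To treat the nonlinear terms I would upgrade this to strong convergence. Since $(\psiak)_k$ is bounded in $\Winftyone([0,T],\Htwodiamond(\Omega))$ and $(\partial_t\psiak)_k$ in $\Winftyone([0,T],\Honezero(\Omega))\cap\Linfty([0,T],\Htwodiamond(\Omega))$, a compactness argument of Aubin--Lions--Simon type, based on the compact embedding $\Htwo(\Omega)\hookrightarrow H^{2-\varepsilon}(\Omega)$ and $H^{2-\varepsilon}(\Omega)\hookrightarrow\Hone(\Omega)$, yields, along a further subsequence (not relabelled), $\psiak\to\psizero$ and $\partial_t\psiak\to\partial_t\psizero$ in $C([0,T],H^{2-\varepsilon}(\Omega))$ for a small $\varepsilon>0$. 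In three space dimensions this gives $\partial_t\psiak\to\partial_t\psizero$ in $C([0,T],\Linfty(\Omega))$ and $\nabla\psiak\to\nabla\psizero$, $\nabla\partial_t\psiak\to\nabla\partial_t\psizero$ in $C([0,T],\Lfour(\Omega))$.

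\emph{Passage to the limit.} For each $a\in(0,\overline{a}]$ the solution $\psia$ satisfies the integrated reformulation in the theorem; I test it against an arbitrary $v\in\Lone([0,T],\Honezero(\Omega))$ and integrate by parts once in space. The terms carrying a factor $\bba_2(\sigma_0)$, $\bba_4(\sigma_0)$, or $\bba_1-\bbzero_1$ vanish in the limit: from $\bba_2(\sigma_0)\normbig{\Ltwo}{\nabla\Delta\partial_t\psia(t)}^2\leq C$ one gets $\sqrt{\bba_2(\sigma_0)}\,\normbig{\Ltwo}{\nabla\Delta\partial_t\psia(t)}\leq\sqrt{C}$, hence $\bba_2(\sigma_0)\normbig{\Ltwo}{\nabla\Delta\psia(t)}\leq\bba_2(\sigma_0)\normbig{\Ltwo}{\nabla\Delta\psi_0}+\sqrt{\bba_2(\sigma_0)}\sqrt{C}\,T\to0$ uniformly in $t$, and, using $\bba_4(\sigma_0)=\bba_2(\sigma_0)/\bba_0(\sigma_0)$ with $\bba_0(\sigma_0)$ bounded below, also $\bba_4(\sigma_0)\int_0^t\normbig{\Ltwo}{\nabla\Delta\psia(\tau)}\,\dd\tau\to0$; together with $\bba_1-\bbzero_1=a\,(1+\tfrac{B}{A})\to0$ and the uniform bound on $\normbig{\Ltwo}{\nabla\partial_t\psia(t)}$, all these contributions vanish when paired with $v$. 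The linear terms $\scpbig{\Ltwo}{\partial_{tt}\psiak}{v}$, $\bbzero_1\scpbig{\Ltwo}{\nabla\partial_t\psiak}{\nabla v}$, $\bb_3\scpbig{\Ltwo}{\nabla\psiak}{\nabla v}$ converge by the weak* convergence in $X_0$; the nonlinearity $\bb_5(\sigma)\,\partial_{tt}\psiak\,\partial_t\psiak$ converges by combining the weak convergence of $\partial_{tt}\psiak$ in $\Ltwo([0,T],\Ltwo(\Omega))$ with the uniform convergence of $\partial_t\psiak$, exactly as in the verification of the solution property in the proof of Proposition~\ref{thm:Proposition}, and $2\,\bb_6(\sigma)\,\nabla\partial_t\psiak\cdot\nabla\psiak$ converges strongly in $C([0,T],\Ltwo(\Omega))$ by the compactness step. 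Thus $\psizero$ solves the weak formulation of~\eqref{eq:Solution2}, and the initial conditions $\psizero(0)=\psi_0$, $\partial_t\psizero(0)=\psi_1$ follow from the $C([0,T],H^{2-\varepsilon}(\Omega))$ convergence; the consistency condition~\eqref{eq:ICConsistency} enters precisely because it is what makes the integrated reformulation~\eqref{eq:GeneralModelReformulationIntegrationF} carry no surviving constant-in-time term $F(\psi(0))$, so that the limit is~\eqref{eq:Solution2} itself rather than an inhomogeneous variant.

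\emph{Uniqueness and main obstacle.} Finally, \eqref{eq:Solution2} has at most one solution in $X_0$ with the prescribed data --- obtainable from a lower-order difference energy estimate, testing the difference $\psi^1-\psi^2$ of two solutions with $\partial_t(\psi^1-\psi^2)$ and invoking Gronwall's inequality, in the spirit of the contractivity argument for $\nT$ --- so the usual subsequence--subsequence reasoning upgrades convergence of the subsequence to convergence of the whole family: $\psia\stackrel{*}{\rightharpoonup}\psizero$ in $X_0$ as $a\to0_{+}$. The key difficulty is the treatment of the terms weighted by $\bba_2(\sigma_0)$ and $\bba_4(\sigma_0)$: the factors $\Delta^2\psia$ and $\nabla\Delta\psia$ are \emph{not} bounded uniformly in $a$ --- the corresponding contribution to the energy degenerates as $\bba_2(\sigma_0)\to0$, which is exactly why the higher regularity of the space $X$ cannot be retained in the limit --- and the point is that the a priori estimates nevertheless control $\sqrt{\bba_2(\sigma_0)}$ times these quantities, so half a power of $\bba_2(\sigma_0)$ is spent to keep them bounded and the other half to drive the product to zero; by comparison the compactness required for the nonlinear products is routine.
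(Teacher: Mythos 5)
Your proposal is correct and follows essentially the same route as the paper: uniform-in-$a$ energy bounds from Proposition~\ref{thm:Proposition}, weak* extraction of a subsequence in $X_0$, the $\sqrt{\bba_2(\sigma_0)}$-splitting that makes the terms weighted by $\bba_2(\sigma_0)$, $\bba_4(\sigma_0)=\bba_2(\sigma_0)/\bba_0(\sigma_0)$, and $\bba_1-\bbzero_1$ vanish when paired with $v$, strong compactness to pass to the limit in the nonlinearity, and a subsequence--subsequence argument combined with uniqueness of the solution to the limiting equation. The only cosmetic difference is that you obtain strong convergence via an Aubin--Lions--Simon argument in $C([0,T],H^{2-\varepsilon}(\Omega))$, whereas the paper uses the compact embedding $X_0\hookrightarrow \Hone([0,T],\Wfourone(\Omega))$; both serve the same purpose in the nonlinear terms.
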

\begin{proof}
\begin{enumerate}[(i)]
\item 
\emph{Convergence.} 
In the present situation, as a consequence of Proposition~\ref{thm:Proposition}, a sequence of positive numbers $(a_k)_{k \in \NN}$ with limit zero exists such that the associated sequence~$(\psiak)_{k \in \NN}$ converges to a function $\psizero \in X_0$ in the following sense 
\begin{equation*}
\begin{gathered}
\psiak \stackrel{*}{\rightharpoonup} \psizero \text{ in } X_0 \text{ as } k \to \infty\,, \\
\psiak \rightarrow \psizero \text{ in } \wX = \Hone\big([0, T], \Wfourone(\Omega)\big) \text{ as } k \to \infty\,.
\end{gathered}
\end{equation*}
\item 
\emph{Verification of solution property.} 
In order to verify that~$\psizero$ is a solution to~\eqref{eq:Solution2}, we make use of the fact that any function~$\psiak$ satisfies 
\begin{equation*}
\nLak \psiak + \nLak_0 + \nN\big(\psiak, \psiak\big) + \nN_0 = 0\,, 
\end{equation*}
see~\eqref{eq:LN}, and prove that the difference 
\begin{equation*}
\begin{split}
&\nLak \psiak - \nLzero \psizero + \nN\big(\psiak, \psiak\big) - \nN\big(\psizero, \psizero\big) \\
&= \big(\nLak - \nLzero\big) \, \psiak + \nLzero \big(\psiak - \psizero\big) \\
&\qquad + \nN\big(\psiak - \psizero, \psiak\big) + \nN\big(\psizero, \psiak - \psizero\big)
\end{split}
\end{equation*}
tends to zero in a weak sense. 
On the one hand, testing the reformulation of the general model with $v \in \Lone([0, T], \Honezero(\Omega))$ and employing integration-by-parts, yields 
\begin{equation*}
\begin{split}
&\int_{0}^{T} \scpBig{\Ltwo}{\big(\nLak - \nLzero\big) \, \psiak(t)}{v(t)} \; \dd t \\
&\quad = \int_{0}^{T} \Big(\big(\bbak_1 - \bbzero_1\big) \, \scpbig{\Ltwo}{\nabla \partial_{t} \psiak(t)}{\nabla v(t)} \\
&\quad\qquad - \bbak_2(\sigma_0) \, \scpbig{\Ltwo}{\nabla \Delta \psiak(t)}{\nabla v(t)}\Big) \; \dd t \\
&\quad\qquad - \tfrac{\bbak_2(\sigma_0)}{\bbak_0(\sigma_0)} \int_{0}^{T} \int_{0}^{t} \scpbig{\Ltwo}{\nabla \Delta \psiak(\tau)}{\nabla v(t)} \; \dd \tau \, \dd t\,, 
\end{split}
\end{equation*}
which tends to zero, since 
\begin{equation*}
\normbig{\Linfty([0, T], \Ltwo(\Omega))}{\nabla \partial_{t} \psiak}\,, \quad 
\sqrt{\bbak_2(\sigma_0)} \, \normbig{\Linfty([0, T], \Ltwo(\Omega))}{\nabla \Delta \psiak}\,, 
\end{equation*} 
are uniformly bounded for $a_k \in [0, \overline{a}]$.
On the other hand, it is seen that 
\begin{equation*}
\begin{split}
&\int_{0}^{T} \scpBig{\Ltwo}{\nLzero \big(\psiak(t) - \psizero(t)\big)}{v(t)} \; \dd t \\
&\quad = \int_{0}^{T} \bigg(\scpBig{\Ltwo}{\partial_{tt} \big(\psiak(t) - \psizero(t)\big)}{v(t)} \\
&\quad\qquad 
- \bbzero_1 \, \scpBig{\Ltwo}{\Delta \partial_{t} \big(\psiak(t) - \psizero(t)\big)}{v(t)} \\
&\quad\qquad - \bb_3 \, \scpBig{\Ltwo}{\Delta \big(\psiak(t) - \psizero(t)\big)}{v(t)}\bigg) \; \dd t 
\end{split}
\end{equation*}
tends to zero by the weak convergence in~$X_0$.
For the nonlinear part, the same argument as given in the proof of Proposition~\ref{thm:Proposition} applies. 
We finally note that convergence of the family $(\psia)_{a \in (0, \overline{a}}$ follows from a subsequence-subsequence argument and uniqueness of the solutions to the Kuznetsov and Westervelt equations.
Altogether, we thus obtain 
\begin{equation*}
\begin{split}
&\int_{0}^{T} \Big(\scpbig{\Ltwo}{\partial_{tt} \psia(t)}{v(t)} 
+ \bbzero_1 \, \scpbig{\Ltwo}{\nabla \partial_{t} \psia(t)}{\nabla v(t)} \\
&\qquad + \bb_3 \, \scpbig{\Ltwo}{\nabla \psia(t)}{\nabla v(t)} \\
&\qquad + \big(\bba_1 - \bbzero_1\big) \, \scpbig{\Ltwo}{\nabla \partial_{t} \psia(t) - \nabla \psi_1}{\nabla v(t)} \\
&\qquad - \bba_2(\sigma_0) \, \scpbig{\Ltwo}{\nabla \Delta \psia(t) - \nabla \Delta \psi_0}{\nabla v(t)} \\
&\qquad - \bba_4(\sigma_0) \int_{0}^{T} \int_{0}^{t} \scpbig{\Ltwo}{\nabla \Delta \psia(\tau)}{\nabla v(t)} \; \dd \tau \\
&\qquad + \bb_5(\sigma) \, \scpbig{\Ltwo}{\partial_{tt} \psia(t) \, \partial_{t} \psia(t)}{v(t)} \\
&\qquad + 2 \, \bb_6(\sigma) \, \scpbig{\Ltwo}{\nabla \partial_{t} \psia(t) \cdot \nabla \psia(t)}{v(t)}\Big) \; \dd t \\
\overset{a \to 0_{+}}{\longrightarrow} \, 
&\int_{0}^{T} \Big(\scpbig{\Ltwo}{\partial_{tt} \psizero(t)}{v(t)} 
+ \bbzero_1 \, \scpbig{\Ltwo}{\nabla \partial_{t} \psizero(t)}{\nabla v(t)} \\
&\qquad + \bb_3 \, \scpbig{\Ltwo}{\nabla \psizero(t)}{\nabla v(t)} \\
&\qquad + \bb_5(\sigma) \, \scpbig{\Ltwo}{\partial_{tt} \psizero(t) \, \partial_{t} \psizero(t)}{v(t)} \\
&\qquad + 2 \, \bb_6(\sigma) \, \scpbig{\Ltwo}{\nabla \partial_{t} \psizero(t) \cdot \nabla \psizero(t)}{v(t)}\Big) \; \dd t\,, 
\end{split}
\end{equation*}
which concludes the proof. 
\hfill $\diamond$ 
\end{enumerate}
\end{proof}

\begin{remark}
Under stronger regularity and compatibility requirements on the initial data, the solution space considered in \MyReference{KaltenbacherLasiecka2012} for the Kuznetsov equation is 
\begin{equation*}
\begin{split}
X_0 &\cap \Winftythree\big([0, T], \Ltwo(\Omega)\big) \cap \Hthree\big([0, T], \Honezero (\Omega)\big) \cap \Winftytwo\big([0, T], \Honezero(\Omega)\big) \\
& \cap \Winftyone\big([0, T], \Htwo(\Omega)\big)\,, 
\end{split}
\end{equation*}
see Theorem 1.1 with $u = \partial_{t} \psi$. 
\end{remark}

\appendix 
\section{Detailed derivation of most general model}
\label{SectionAppendix}
In the following, we deduce the Blackstock--Crighton--Brunnhuber--Jordan--Kuznetsov equation~\eqref{eq:BrunnhuberJordanKuznetsov} from the conservation laws for mass, momentum, and energy as well as a heuristic equation of state relating mass density, acoustic pressure, and temperature, see~\eqref{eq:ConservationLaws} and~\eqref{eq:EquationState}.
For notational simplicity, we include detailed calculations for the one-dimensional case; 
the extension to higher space dimensions is then straightforward. 
In order to indicate that only terms which are linear or quadratic with respect to the fluctuating quantities are taken into account, we introduce a (small) positive real number $\varepsilon > 0$ and set 
\begin{equation}
\label{eq:MeanFlucations}
\varrho = \varrho_0 + \varepsilon \, \varrho_{\sim}\,, \quad v = \varepsilon \, \partial_x \psi\,, \quad 
p = p_0 + \varepsilon \, p_{\sim}\,, \quad T = T_0 + \varepsilon \, T_{\sim}\,;
\end{equation}
here, we anticipate that inserting the Helmholtz composition~\eqref{eq:Helmholtz} into the fundamental relations permits a decoupling into irrotational and  rotational parts. 
Moreover, to identify terms that are related to dissipative effects, we replace $\mu_B, \mu, c_V, c_p$ as well as~$\hat{A}$ and~$a$ by 
\begin{equation*}
\delta \mu_B\,, \quad \delta \mu\,, \quad \delta c_V\,, \quad \delta c_p\,, \quad \gamma \hat{A}\,, \quad \lambda \, a\,, 
\end{equation*}
where $\delta, \gamma, \lambda > 0$ denote (small) positive real numbers that will be adjusted later on. 

\paragraph{Fundamental relations.} 
For convenience, we restate the fundamental equations~\eqref{eq:ConservationLaws} and~\eqref{eq:EquationState} employing~\eqref{eq:MeanFlucations}. 
In a single space dimension, the relation reflecting conservation of mass~\eqref {eq:ConservationMass} reads 
\begin{subequations}
\begin{equation}
\label{eq:Rel(i)1}
\varepsilon \, \partial_{t} \varrho_{\sim} 
+ \varepsilon \, \varrho_0 \, \partial_{xx} \psi 
+ \varepsilon^2 \, \partial_x \varrho_{\sim} \, \partial_x \psi 
+ \varepsilon^2 \, \varrho_{\sim} \, \partial_{xx} \psi = 0\,.
\end{equation}
Omitting higher-order contributions, i.e.~terms of the form $o(\varepsilon^2)$, the relation describing conservation of momentum~\eqref{eq:ConservationMomentum} simplifies as follows 
\begin{equation*}
\begin{split}
&\varepsilon \, \varrho_0 \, \partial_{xt} \psi 
+ \varepsilon \, \partial_x p_{\sim} 
- \, \varepsilon \, \delta \, \big(\mu_B + \tfrac{4}{3} \, \mu\big) \, \partial_{xxx} \psi \\ 
&\quad + \varepsilon^2 \, \partial_{t} \varrho_{\sim} \, \partial_x \psi 
+ \varepsilon^2 \, \varrho_{\sim} \, \partial_{xt} \psi 
+ 2 \, \varepsilon^2 \, \varrho_0 \, \partial_x \psi \, \partial_{xx} \psi = 0\,.
\end{split}
\end{equation*}
Substracting the $\varepsilon \, \partial_x \psi$ multiple of~\eqref{eq:Rel(i)1}, leads to 
\begin{equation}
\label{eq:Rel(ii)1}
\begin{split}
&\partial_x \, \Big(\varepsilon \, \varrho_0 \, \partial_{t} \psi 
+ \varepsilon \, p_{\sim} 
- \, \varepsilon \, \delta \, \big(\mu_B + \tfrac{4}{3} \, \mu\big) \, \partial_{xx} \psi
+ \varepsilon^2 \, \tfrac{\varrho_0}{2} \, \big(\partial_x \psi\big)^2\Big) \\
&\quad + \varepsilon^2 \, \varrho_{\sim} \, \partial_{xt} \psi 
= 0\,.
\end{split}
\end{equation}
Neglecting contributions of the form $o(\varepsilon^2)$, we obtain the following relation reflecting the conservation of energy~\eqref{eq:ConservationEnergy} in a single space dimension
\begin{equation}
\label{eq:Rel(iii)1}
\varepsilon \, \delta \, \tfrac{c_p-c_V}{\alpha_V} \, \varrho_0 \, \partial_{xx} \psi
- \varepsilon \, \lambda \, a \, \partial_{xx} T_{\sim} 
+ \varepsilon \, \delta \, c_V \varrho_0 \, \partial_{t} T_{\sim} = 0\,.
\end{equation}
Omitting higher-order contributions, the equation of state~\eqref{eq:EquationState} reduces to 
\begin{equation}
\label{eq:Rel(iv)1}
\varepsilon \, p_{\sim} = \varepsilon \, \tfrac{A}{\varrho_0} \, \varrho_{\sim} + \varepsilon^2 \, \tfrac{B}{2 \varrho_0^2} \, \varrho_{\sim}^2 
+ \varepsilon \, \gamma \, \tfrac{\hat{A}}{T_0} \, T_{\sim}\,.
\end{equation}
\end{subequations}

\paragraph{Linear wave equation.}
Reconsidering the equations~\eqref{eq:Rel(i)1}-\eqref{eq:Rel(iv)1} and incorporating only first-order contributions, i.e. terms of the form $\nO(\varepsilon)$, yields 
\begin{equation*}
\partial_{t} \varrho_{\sim} + \varrho_0 \, \partial_{xx} \psi = 0\,, \quad 
\partial_x \, \big(\varrho_0 \, \partial_{t} \psi + p_{\sim}\big) = 0\,, \quad 
p_{\sim} = \tfrac{A}{\varrho_0} \, \varrho_{\sim}\,.
\end{equation*}
Integration with respect to the space variable shows that a solution of the system 
\begin{equation*}
\partial_{t} \varrho_{\sim} = - \, \varrho_0 \, \partial_{xx} \psi\,, \quad 
p_{\sim} = \tfrac{A}{\varrho_0} \, \varrho_{\sim} = - \, \varrho_0 \, \partial_{t} \psi\,,
\end{equation*}
is also a solution of the original system.
The relation for the acoustic pressure implies
\begin{equation*}
\varrho_{\sim} = - \, \tfrac{\varrho_0^2}{A} \, \partial_{t} \psi\,;
\end{equation*}
together with the identity $A = c_0^2 \, \varrho_0$ this leads to a linear wave equation for the acoustic velocity potential 
\begin{equation*}
\partial_{tt} \psi - c_0^2 \, \partial_{xx} \psi = 0\,.
\end{equation*}

\paragraph{Nonlinear damped wave equation.}
The above considerations explain the ansatz 
\begin{equation*}
\varrho_{\sim} = - \, \tfrac{\varrho_0^2}{A} \, \partial_{t} \psi + \varepsilon \, \varrho_0 \, F
\end{equation*}
with space-time-dependent real-valued function~$F$ determined by~\eqref{eq:Rel(i)1}. 
Inserting this representation into~\eqref{eq:Rel(i)1}-\eqref{eq:Rel(iv)1}, neglecting higher-order contributions, employing the identity
\begin{equation*}
\partial_{xt} \psi \, \partial_{t} \psi = \tfrac{1}{2} \, \partial_x \big(\partial_{t} \psi\big)^2\,, 
\end{equation*}
and integrating~\eqref{eq:Rel(ii)1} with respect to space, we arrive at 
\begin{subequations}
\begin{gather}
\label{eq:Rel(i)}
\varepsilon^2 \, \partial_{t} F
= \varepsilon \, \tfrac{\varrho_0}{A} \, \partial_{tt} \psi - \varepsilon \, \partial_{xx} \psi
+ \varepsilon^2 \, \tfrac{\varrho_0}{A} \, \big(\partial_{xt} \psi \, \partial_x \psi + \partial_{xx} \psi \, \partial_{t} \psi\big)\,, \\ 
\label{eq:Rel(ii)}
\varepsilon \, \varrho_0 \, \partial_{t} \psi 
+ \varepsilon \, p_{\sim} 
- \, \varepsilon \, \delta \, \big(\mu_B + \tfrac{4}{3} \, \mu\big) \, \partial_{xx} \psi
+ \varepsilon^2 \, \tfrac{\varrho_0}{2} \, \big(\partial_x \psi\big)^2
- \, \varepsilon^2 \, \tfrac{\varrho_0^2}{2 A} \, \big(\partial_{t} \psi\big)^2 = 0\,, \\
\label{eq:Rel(iii)}
\varepsilon \, \delta \, \tfrac{c_p-c_V}{\alpha_V} \, \varrho_0 \, \partial_{xx} \psi
- \varepsilon \, \lambda \, a \, \partial_{xx} T_{\sim} 
+ \varepsilon \, \delta \, c_V \varrho_0 \, \partial_{t} T_{\sim} = 0\,, \\
\label{eq:Rel(iv)}
\varepsilon \, \varrho_0 \, \partial_{t} \psi + \varepsilon \, p_{\sim} 
= \varepsilon^2 \, A \, F
+ \varepsilon^2 \, \tfrac{B}{A} \, \tfrac{\varrho_0^2}{2 A} \, \big(\partial_{t} \psi\big)^2 
+ \varepsilon \, \gamma \, \tfrac{\hat{A}}{T_0} \, T_{\sim}\,.
\end{gather}
\end{subequations}
On the one hand, we insert~\eqref{eq:Rel(iv)} into~\eqref{eq:Rel(ii)}, differentiate the resulting equation with respect to time, and insert~\eqref{eq:Rel(i)} to obtain 
\begin{equation*}
\begin{split}
&\varepsilon \, \varrho_0 \, \partial_{tt} \psi - \varepsilon \, A \, \partial_{xx} \psi
- \, \varepsilon \, \delta \, \big(\mu_B + \tfrac{4}{3} \, \mu\big) \, \partial_{xxt} \psi
+ \varepsilon \, \gamma \, \tfrac{\hat{A}}{T_0} \, \partial_{t} T_{\sim} \\
&\quad + \varepsilon^2 \, \big(\tfrac{B}{A} - 1\big) \, \tfrac{\varrho_0^2}{A} \, \partial_{tt} \psi \, \partial_{t} \psi 
+ 2 \, \varepsilon^2 \, \varrho_0 \, \partial_{xt} \psi \, \partial_x \psi
+ \varepsilon^2 \, \varrho_0 \, \partial_{xx} \psi \, \partial_{t} \psi = 0\,;
\end{split}
\end{equation*}
replacing the second-order contribution $\varepsilon^2 \, \varrho_0 \, \partial_{xx} \psi \, \partial_{t} \psi$ with 
$\varepsilon^2 \, \tfrac{\varrho_0^2}{A} \, \partial_{tt} \psi \, \partial_{t} \psi + o(\varepsilon^2)$, see~\eqref{eq:Rel(i)}, further yields  
\begin{equation}
\label{eq:Rel1}
\begin{split}
\varepsilon \, \gamma \, \partial_{t} T_{\sim}
= &- \, \varepsilon \, \tfrac{\varrho_0 T_0}{\hat{A}} \, \partial_{tt} \psi + \varepsilon \, \tfrac{A T_0}{\hat{A}} \, \partial_{xx} \psi 
+ \varepsilon \, \delta \, \big(\mu_B + \tfrac{4}{3} \, \mu\big) \, \tfrac{T_0}{\hat{A}} \, \partial_{xxt} \psi \\
&- \, \varepsilon^2 \, \tfrac{B}{A} \, \tfrac{\varrho_0 T_0}{\hat{A}} \, \tfrac{\varrho_0}{2 A} \, \partial_{t} \big(\partial_{t} \psi\big)^2 
- \varepsilon^2 \, \tfrac{\varrho_0 T_0}{\hat{A}} \, \partial_{t} \big(\partial_x \psi\big)^2\,.
\end{split}
\end{equation}
On the other hand, differentiating~\eqref{eq:Rel(iii)} with respect to time, we have 
\begin{equation*}
\varepsilon \, \delta \, \tfrac{c_p-c_V}{\alpha_V} \, \varrho_0 \, \partial_{xxt} \psi
- a \, \partial_{xx} \big(\varepsilon \, \lambda \, \partial_{t} T_{\sim}\big)
+ c_V \varrho_0 \, \partial_{t} \big(\varepsilon \, \delta \, \partial_{t} T_{\sim}\big) = 0\,; 
\end{equation*}
with the help of~\eqref{eq:Rel1}, this yields 
\begin{equation*}
\begin{split}
&\varepsilon \, \partial_{ttt} \psi 
- \Big(\tfrac{\varepsilon \lambda}{\delta} \, \tfrac{a}{c_V \varrho_0} + \varepsilon \, \delta \, \tfrac{1}{\varrho_0} \, \big(\mu_B + \tfrac{4}{3} \, \mu\big)\Big) \, \partial_{xxtt} \psi \\
&\quad + \varepsilon \, \lambda \, \tfrac{a}{c_V \varrho_0} \, \tfrac{1}{\varrho_0} \, \big(\mu_B + \tfrac{4}{3} \, \mu\big) \, \partial_{xxxxt} \psi \\
&\quad - \Big(\varepsilon \, \tfrac{A}{\varrho_0} + \varepsilon \, \gamma \, \tfrac{c_p-c_V}{\alpha_V} \, \tfrac{\hat{A}}{c_V \varrho_0 T_0}\Big) \, \partial_{xxt} \psi \\
&\quad + \tfrac{\varepsilon \lambda}{\delta} \, \tfrac{a}{c_V \varrho_0} \, \tfrac{A}{\varrho_0} \, \partial_{xxxx} \psi \\
&\quad + \varepsilon^2 \, \tfrac{B}{A} \, \tfrac{\varrho_0}{2 A} \, \partial_{tt} \big(\partial_{t} \psi\big)^2 
+ \varepsilon^2 \, \partial_{tt} \big(\partial_x \psi\big)^2 \\
&\quad - \tfrac{\varepsilon^2 \lambda}{\delta} \, \tfrac{B}{A} \, \tfrac{a}{2 A c_V} \, \partial_{xxt} \big(\partial_{t} \psi\big)^2
- \tfrac{\varepsilon^2 \lambda}{\delta} \, \tfrac{a}{c_V \varrho_0} \, \partial_{xxt} \big(\partial_x \psi\big)^2 = 0\,. 
\end{split}
\end{equation*}
With the special scaling 
\begin{equation*}
\delta = \sqrt{\varepsilon}\,, \quad \gamma = \sqrt{\varepsilon} \, \varepsilon\,, \quad \lambda = \varepsilon\,, 
\end{equation*}
we arrive at the relation 
\begin{equation*}
\begin{split}
&\varepsilon \, \partial_{ttt} \psi 
- \varepsilon \, \sqrt{\varepsilon} \, \Big(\tfrac{a}{c_V \varrho_0} + \tfrac{1}{\varrho_0} \, \big(\mu_B + \tfrac{4}{3} \, \mu\big)\Big) \, \partial_{xxtt} \psi \\
&\quad + \varepsilon^2 \, \tfrac{a}{c_V \varrho_0} \, \tfrac{1}{\varrho_0} \, \big(\mu_B + \tfrac{4}{3} \, \mu\big) \, \partial_{xxxxt} \psi \\
&\quad - \Big(\varepsilon \, \tfrac{A}{\varrho_0} + \varepsilon^2 \, \sqrt{\varepsilon} \, \tfrac{c_p-c_V}{\alpha_V} \, \tfrac{\hat{A}}{c_V \varrho_0 T_0}\Big) \, \partial_{xxt} \psi \\
&\quad + \varepsilon \sqrt{\varepsilon} \, \tfrac{a}{c_V \varrho_0} \, \tfrac{A}{\varrho_0} \, \partial_{xxxx} \psi \\
&\quad + \varepsilon^2 \, \tfrac{B}{A} \, \tfrac{\varrho_0}{2 A} \, \partial_{tt} \big(\partial_{t} \psi\big)^2 
+ \varepsilon^2 \, \partial_{tt} \big(\partial_x \psi\big)^2 \\
&\quad - \varepsilon^2 \sqrt{\varepsilon} \, \tfrac{B}{A} \, \tfrac{a}{2 A c_V} \, \partial_{xxt} \big(\partial_{t} \psi\big)^2
- \varepsilon^2 \sqrt{\varepsilon} \, \tfrac{a}{c_V \varrho_0} \, \partial_{xxt} \big(\partial_x \psi\big)^2 = 0\,; 
\end{split}
\end{equation*}
neglecting the higher-order terms 
\begin{equation*}
\varepsilon^2 \, \sqrt{\varepsilon} \, \tfrac{c_p-c_V}{\alpha_V} \, \tfrac{\hat{A}}{c_V \varrho_0 T_0} \, \partial_{xxt} \psi\,, \quad 
\varepsilon^2 \sqrt{\varepsilon} \, \tfrac{B}{A} \, \tfrac{a}{2 A c_V} \, \partial_{xxt} \big(\partial_{t} \psi\big)^2\,, \quad 
\varepsilon^2 \sqrt{\varepsilon} \, \tfrac{a}{c_V \varrho_0} \, \partial_{xxt} \big(\partial_x \psi\big)^2\,, 
\end{equation*}
omitting then $\varepsilon > 0$ and employing the relations
\begin{equation*}
\tfrac{1}{\varrho_0} \, \big(\mu_B + \tfrac{4}{3} \, \mu\big) = \nu \Lambda\,, \quad A = c_0^2 \, \varrho_0\,, \quad 
\tfrac{a}{c_V \varrho_0} = a \, \big(1 + \tfrac{B}{A}\big)\,, 
\end{equation*}
see Table~\ref{Table2}, finally leads to the nonlinear damped wave equation 
\begin{equation*}
\begin{split}
&\partial_{ttt} \psi 
- \Big(a \, \big(1 + \tfrac{B}{A}\big) + \nu \Lambda\Big) \, \partial_{xxtt} \psi 
+ a \, \big(1 + \tfrac{B}{A}\big) \, \nu \Lambda \, \partial_{xxxxt} \psi 
- c_0^2 \, \partial_{xxt} \psi \\
&\quad + a \, \big(1 + \tfrac{B}{A}\big) \, c_0^2 \, \partial_{xxxx} \psi
+ \partial_{tt} \Big(\tfrac{1}{2 c_0^2} \, \tfrac{B}{A} \, \big(\partial_{t} \psi\big)^2 + \big(\partial_x \psi\big)^2\Big) = 0\,, 
\end{split}
\end{equation*}
see also~\eqref{eq:BrunnhuberJordanKuznetsov};
it is remarkable that the differential operator defining the linear contributions factorises as follows 
\begin{equation*}
\Big(\partial_{t} - a \, \big(1 + \tfrac{B}{A}\big) \, \partial_{xx}\Big) \, \big(\partial_{tt} - \nu \Lambda \, \partial_{xxt} - c_0^2 \, \partial_{xx}\big) \, \psi
+ \partial_{tt} \Big(\tfrac{1}{2 c_0^2} \, \tfrac{B}{A} \, \big(\partial_{t} \psi\big)^2 + \big(\partial_x \psi\big)^2\Big) = 0\,.
\end{equation*}
\end{document}